\newcommand\hbr{\sigma}
\def\1{\mathbf{1}}
\def\a{\alpha}
\def\b{\beta}
\def\e{\epsilon}
\def\be{{\boldsymbol\epsilon}}
\def\sgns{\operatorname{sgn}(s)}
\def\w{\theta}
\def\G{\Gamma}
\def\g{\gamma}
\def\s{\sigma}
\def\hs{\hat\sigma}
\def\es{\epsilon_\sigma}
\def\bhs{\hat{\boldsymbol\sigma}}
\def\orho{\ol \rho}
\def\l{\lambda}
\def\k{\Bbbk}
\def\bm1{\mathbbm{1}}
\def\fs{\mathfrak{s}}
\def\ft{\mathfrak{t}}
\def\fg{\gamma}
\def\ff{\mathfrak{f}}
\def\fd{\mathfrak{d}}
\def\BQ{{\mathbb{Q}}}
\def\BQA{{\mathbb{Q}_{\operatorname{ab}}}}
\def\qdim{{\operatorname{qdim}}}
\def\BC{{\mathbb{C}}}
\def\BK{{\mathbb{K}}}
\def\BZ{{\mathbb{Z}}}
\def\BR{{\mathbb{R}}}
\def\BN{{\mathbb{N}}}
\def\BS{{\tilde{\mathbf{s}}}}
\def\bs{{\mathbf{s}}}
\def\BT{{\tilde{\mathbf{t}}}}
\def\bX{{\mathbf{X}}}
\def\bc{{\mbox{\bf c}}}
\def\qed{{\ \ \ \mbox{$\square$}}}
\newcommand\mtx[1]{\begin{bmatrix} #1 \end{bmatrix}}
\newcommand{\pmtx}[1]{\begin{pmatrix}#1\end{pmatrix}}
\newcommand\Itw[3]{\pmtx{ #1 \\ #2\,\, #3}}
\def\wt{{\mbox{\rm wt}}}
\def \h{\mathfrak{h}}
\def\tr{{\mbox{\rm tr}}}
\def\Hom{{\mbox{\rm Hom}}}
\def\End{{\mbox{\rm End}}}
\def\Aut{{\mbox{\rm Aut}}}
\def\PGL{{\mbox{\rm PGL}}}
\def\PSL{{PSL_2(\BZ)}}
\def\Gal{{\mbox{\rm Gal}}}
\def\FSexp{{\mbox{\rm FSexp}}}
\def\gs{G_\sigma}
\def\ts{\tilde{s}}
\def\tt{\tilde{t}}
\newcommand\Id{\operatorname{Id}}
\newcommand\FPdim{\operatorname{FPdim}}
\newcommand\Rep{\operatorname{Rep}}
\newtheorem{thm}{Theorem}[section]
\newtheorem{cor}[thm]{Corollary}
\newtheorem{prop}[thm]{Proposition}
\newtheorem{lem}[thm]{Lemma}
\newtheorem{defn}[thm]{Definition}
\newtheorem{example}[thm]{Example}
\newtheorem{remark}[thm]{Remark}
\newtheorem{q}[thm]{Question}
\numberwithin{equation}{section}
\newcommand\ord{\operatorname{ord}}
\newcommand\im{\operatorname{im}}
\newcommand\Z{\BZ}
\newcommand\ol[1]{\overline{#1}}
\newcommand\replace[1]{}
\newcommand\ptr{\operatorname{\underline{ptr}}}
\newcommand\ptrl{\ptr^\ell}
\newcommand\ptrr{\ptr^r}
\newcommand\id{\operatorname{id}}
\renewcommand\o{\otimes}
\newcommand\Tr{\operatorname{Tr}}
\newcommand{\SL}[1]{SL_2(\BZ_{#1})}
\newcommand{\GLk}[1]{GL_{#1}( \k)}
\newcommand{\GLC}[1]{GL_{#1}(\BC)}
\newcommand{\GLR}[2]{GL_{#1}(#2)}
\newcommand{\PGLk}[1]{PGL_{#1}(\k)}
\newcommand\inv{^{-1}}
\DeclareMathOperator\ev{{\operatorname{ev}}}
\DeclareMathOperator\db{\operatorname{db}}
\newcommand\CC{\mathcal C}
\newcommand\KK{\mathcal K}
\newcommand\OO{\mathcal O}
\newcommand\A{\mathcal A}
\newcommand\du{^{\vee}}
\newcommand\bidu{^{\vee\vee}}
\newcommand\op{{\operatorname{op}}}
\newcommand\GalQ[1]{\Gal(\BQ_{#1}/\BQ)}
\newcommand\gq[1]{\frac{p_#1^+}{p_#1^-}}
\newcommand\Jac[2]{{#1 \overwithdelims () #2}}
\newcommand\AQab{{\Aut(\BQA)}}
\newcounter{app}
\newtheorem{Applem}[app]{Lemma}
\newtheorem*{CPM}{Theorem I}
\newtheorem*{CPMII}{Theorem II}
\newtheorem*{Ak}{Acknowledgment}
\def\namelabel#1#2{\@bsphack
  \protected@write\@auxout{}%
         {\string\newlabel{#1.nme}{{#2}{#2}}}%
  \@esphack}
\title[Congruence property in conformal field theory]{Congruence property in conformal field theory}
\author{Chongying Dong}
\address{Department of Mathematics, University of California, Santa Cruz 98064, USA}
 \email{dong@ucsc.edu}
 \thanks{The first and the third authors were partially supported by NSF grants}
\author{Xingjun Lin}
\address{Department of Mathematics, Sichuan University,
 Chengdu China}
 \email{xingjunlin88@gmail.com}
\author{Siu-Hung Ng}
\address{Department of Mathematics, Louisiana State University, Baton Rouge, LA 70803, USA.}
 \email{rng@math.lsu.edu}
\begin{document}
\maketitle
\begin{abstract}
 The congruence subgroup property is established for the modular representations associated to any modular
 tensor category. This result is used to prove that the kernel of the representation of the modular group on
  the conformal blocks of any rational, $C_2$-cofinite vertex operator algebra is a congruence subgroup.
  In particular,  the $q$-character of each irreducible module is a modular function on the same congruence
  subgroup.  The Galois symmetry of the modular representations is obtained and the order of the anomaly for
  those modular categories satisfying some integrality conditions is determined.
\end{abstract}
\section*{Introduction}

Modular invariance of characters of a rational conformal field theory (RCFT) has been known since the
work of Cardy \cite{Cardy}, and it was proved by Zhu \cite{Z} for rational and $C_2$-cofinite vertex operator
algebras (VOA), which constitute a mathematical formalization of RCFT. The associated matrix representation of
$\SL{}$
relative to the distinguished basis, formed by the trace functions of the irreducible modules or primary
fields, is a powerful tool in the study of vertex operator algebras and conformal field theory.
This matrix representation conceives many intriguing arithmetic properties, and the Verlinde formula is
certainly a notable example \cite{Ver88}. Moreover, it has been shown that these matrices representing
the modular
group
are defined over a certain cyclotomic field \cite{BG91}.

An important characteristic of the modular representation $\rho$ associated with a RCFT is its kernel.
It
has been conjectured by many authors that the kernel is a congruence subgroup of a certain level $n$
(cf.
\cite{Moore87, E95, ES95, DM96, BCIR}). Eholzer further conjectured that this representation is defined
over the $n$-th cyclotomic field $\BQ_n$. In this case, the Galois group $\Gal(\BQ_n/\BQ)$ acts on the
representation
$\rho$ by its entry-wise action. Coste and Gannon proved that $\rho$ determines a signed
permutation matrix $\gs$ for each automorphism $\s$ of $\BQ_n$ \cite{CG}. They also conjectured that the
representation $\s^2 \rho$ is equivalent to $\rho$ under the intertwining operator $\gs$. These
conjectural
properties were summarized as the congruence property of the modular data associated with RCFT
in \cite{CG99, GanBook}. These remarkable properties of RCFT were established by Bantay \cite{Bantay03} under
certain assumptions, and by Coste and Gannon \cite{CG} under the condition that the order of the
Dehn-twist
is odd. In the formalization of RCFT through conformal nets, the congruence property was proved by Xu
\cite{X2}.

In this paper we give a positive answer to the conjecture on the congruence  property for
a rational and $C_2$-cofinite vertex operator algebra $V.$  Such a $V$ has only finitely many irreducible
modules \cite{DLM2}
$M^0,...,M^p$ up to isomorphism and there exist $\lambda_i \in \mathbb{C}$ for $i=0,...,p$ such that
$$M^i=\bigoplus_{n=0}^{\infty}M^i_{\lambda_i +n}$$
 where $M^i_{\lambda_i}\neq 0$ and $L(0)|_{M^i_{\lambda_i+n}}=\lambda_i+n$ for any $n\in\Z.$ Moreover,
 $\lambda_i$ and the central charge $c$ are rational numbers (see \cite{DLM4}).

The trace function for $v\in V_k$ on $M^i$ is defined as
$$Z_i(v,q)=q^{\lambda_i-c/24}\sum_{n=0}^{\infty}(\tr_{M^i_{\lambda_i+n}}o(v))q^{n}$$
where $o(v)=v_{k-1}$ is the $k-1$-st component operator of $Y(v,z)=\sum_{n\in \Z}v_nz^{-n-1}$ which maps each
homogeneous subspace of $M^i$ to itself. If $v=\bm1$ is the vacuum vector we get the $q$-character $\chi_i(q)$
of $M^i.$ It is proved in \cite{Z} that if $V$ is $C_2$-cofinite then $Z_i(v,q)$ converges to a holomorphic
function on the upper half plane in variable $\tau$ where $q=e^{2\pi i\tau}.$  By abusing the notation we also
denote this holomorphic function by $Z_i(v,\tau).$ There is another vertex operator algebra structure
on $V$ \cite{Z} with grading $V=\oplus_{n\in\Z}V_{[n]}.$ We will write $\wt[v]=n$ if $v\in V_{[n]}.$
Then there is a representation $\rho_V$ of the modular group $SL_2(\Z)$ on the space spanned by
$\{Z_i(v,\tau)|i=0...,p\}:$
$$Z_i(v,\gamma\tau)=(c\tau+d)^{\wt[v]}\sum_{j=0}^p\gamma_{ij}Z_j(v,\tau)$$
where $\gamma=\mtx{a & b\\ c& d}\in SL_2(\Z)$ and
$\rho_V(\gamma)= [\gamma_{ij}]$ \cite{Z}.

Here is the first main theorem in this paper.
\begin{CPM}\label{t:II} Let $V$ be a rational, $C_2$-cofinite, self dual simple vertex operator algebra. Then
each $Z_i(v,\tau)$ is a modular form of weight $\wt[v]$ on a congruence subgroup of $SL_2(\BZ)$ of level $n$
which is the smallest positive integer such that $n(\lambda_i-c/24)$ is an integer for all $i.$ In particular,
each $q$-character $\chi_i$ is a modular function on the same congruence subgroup.
\end{CPM}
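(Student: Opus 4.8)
The plan is to realize $\rho_V$ as the modular representation attached to the modular tensor category of $V$-modules and then invoke the congruence subgroup property quoted in the abstract. First I would recall that, since $V$ is rational, $C_2$-cofinite, self-dual and simple, the representation category $\CC = \Rep(V)$ is a modular tensor category (Huang), and that the genuine linear representation $\rho_V$ of $SL_2(\BZ)$ furnished by the transformation law above is identified with the canonical modular representation $\rho_\CC$ of $\CC$. The one computation I need at the outset is that $\rho_V(T)$, for $T = \mtx{1 & 1\\ 0 & 1}$, is diagonal: since the automorphy factor $(c\tau+d)^{\wt[v]}$ equals $1$ for $T$, comparing the transformation law with the effect of $\tau \mapsto \tau+1$ on $Z_i(v,q) = q^{\lambda_i - c/24}\sum_{n\ge 0}(\cdots)q^n$ yields $Z_i(v,\tau+1) = e^{2\pi i(\lambda_i - c/24)}Z_i(v,\tau)$, so that $\rho_V(T) = \operatorname{diag}\big(e^{2\pi i(\lambda_i - c/24)}\big)_{i=0}^p$.

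Next I would read off the order of this diagonal matrix. Its order is the least positive integer $n$ with $e^{2\pi i\,n(\lambda_i - c/24)} = 1$ for every $i$, i.e. with $n(\lambda_i - c/24)\in\BZ$ for all $i$; rationality of the $\lambda_i$ and of $c$ makes $n$ finite, and this is exactly the integer $n$ in the statement. The congruence subgroup property for $\rho_\CC$ now gives $\Gamma(n)\subseteq\ker\rho_V$, so $\ker\rho_V$ is a congruence subgroup of level dividing $n$. That the level is exactly $n$ follows since any $m$ with $\Gamma(m)\subseteq\ker\rho_V$ satisfies $\rho_V(T)^m = \rho_V(T^m) = \Id$ (because $T^m\in\Gamma(m)$), whence $n\mid m$; together with $\Gamma(n)\subseteq\ker\rho_V$ this pins the level at $n$.

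Finally I would translate this invariance into the modular-form property. For $\gamma = \mtx{a & b\\ c& d}\in\ker\rho_V$ we have $\rho_V(\gamma) = \Id$, so the transformation law collapses to $Z_i(v,\gamma\tau) = (c\tau+d)^{\wt[v]}Z_i(v,\tau)$, exactly the weight-$\wt[v]$ automorphy condition on $\ker\rho_V$. Combined with Zhu's theorem (holomorphy on the upper half plane) and the $q$-expansion of $Z_i$ at $i\infty$, whose exponents are of the form $\lambda_i - c/24 + \BZ$ and hence lie in $\frac1n\BZ$, controlling the behaviour at every ($SL_2(\BZ)$-equivalent) cusp, this shows each $Z_i(v,\tau)$ is a modular form of weight $\wt[v]$ on the congruence subgroup $\ker\rho_V$ of level $n$. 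Setting $v = \bm1$, so that $\wt[\bm1] = 0$, exhibits the $q$-characters $\chi_i$ as modular functions on the same group.

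The real obstacle is the identification of $\rho_V$ with the categorical representation $\rho_\CC$: this rests on Huang's theorems that $\Rep(V)$ is a modular tensor category and that its categorical $S$- and $T$-matrices agree with the transformation matrices of the trace functions, where the central charge $c$ is precisely what normalizes the anomaly so that both sides are the same genuine representation. The self-duality and simplicity hypotheses are exactly what force the $S$-matrix to be non-degenerate and thus make the modular-categorical congruence theorem applicable. Once that identification and the established congruence property for modular tensor categories are in hand, the remaining steps—diagonalizing $T$, reading off its order, and checking the cusp behaviour—are routine.
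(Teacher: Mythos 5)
Your proposal follows the same route as the paper's own proof: realize $\rho_V$ as a modular representation of the modular tensor category $\CC_V$, observe that $\rho_V(\ft)=\operatorname{diag}\big(e^{2\pi i(\lambda_i-c/24)}\big)$ has order equal to the integer $n$ of the statement, invoke Theorem II to conclude that $\ker\rho_V$ is a congruence subgroup of level exactly $n$, and unwind the transformation law to get the modular-form statement. This is precisely the paper's argument (Theorem \ref{iden} followed by Theorem II), and your outline is correct. Two points, however, need repair before it is accurate.

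First, $\rho_V$ is \emph{not} ``the canonical modular representation'' of $\CC_V$: the canonical lifting \eqref{eq:canrep} is defined only for anomaly-free categories, and $\CC_V$ is anomaly-free only when $c\equiv 0 \pmod 8$; moreover $\rho_V(\ft)=\tt\, e^{-2\pi i c/24}$ differs from the categorical $T$-matrix $\tt$ whenever $24\nmid c$, so $\rho_V$ is in general not that lifting even when it exists. What your argument actually needs---and what the paper proves in Theorem \ref{iden}(ii)---is only that $\rho_V$ is \emph{a} modular representation, i.e.\ a lifting of the projective representation $\orho_{\CC_V}$; since Theorem II applies to every lifting, the proof survives this correction. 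Second, the exact identification of the categorical with the genus-one $S$- and $T$-matrices is not a direct quotation of Huang. His results give that $\CC_V$ is modular, that the twist is $e^{2\pi i L(0)}$, and that the trace of the double braiding equals $S_{ij}/S_{00}$; this yields only the proportionality $S=\lambda s$ with $\lambda=S_{00}/s_{00}$. Pinning down $\lambda=1$ is the paper's Theorem \ref{iden}(i), and it requires proving the unitarity of the genus-one $S$-matrix (Proposition \ref{3.5}, via the fusion matrices and the Verlinde formula) together with the positivity of $S_{00}$ and $s_{00}$ (Lemma \ref{positive})---work the paper does itself rather than cites. Fortunately, for the congruence conclusion the projective agreement of $\rho_V$ with $\orho_{\CC_V}$ on the generators $\fs,\ft$ is all that is required, so the weaker statement that does follow from Huang suffices at that point; but the identification as you phrased it asserts more than the literature you appeal to provides.
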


We should remark that the modularity of the $q$-characters of irreducible modules for some known vertex
operator algebras such as those associated to the highest weight unitary representations for Kac-Moody
algebras
\cite{KP}, \cite{K} and the Virasoro algebra \cite{Ro} were previously known. The readers are referred to
\cite{DMN} for the modularity of $Z_i(v,\tau)$ when $V$ is a vertex operator algebra associated to a positive
definite even lattice.

According to \cite{H2, H3}, the category $\CC_V$ of modules of a rational and $C_2$-cofinite vertex operator
algebra $V$ under the tensor product defined in \cite{HL1, HL2, HL3, H1} is a modular tensor category over
$\BC$. To establish this theorem we have to turn our attention to general modular tensor categories.

Modular tensor categories, or simply called modular categories, play an integral role in the
Reshetikhin-Turaev TQFT invariant of 3-manifolds \cite{Turaev}, and topological quantum computation \cite{Wa}.
They also constitute another
formalization
of RCFT \cite{MS90, BaKi}.

Parallel to a rational conformal field theory, associated to a modular category
$\A$ are the invertible matrices $\ts$ and $\tt$
indexed
by the set $\Pi$ of isomorphism classes of simple objects of $\A$. These matrices define a projective
representation $\orho_\A$ of $\SL{}$ by the assignment
$$
\fs:=\mtx{0 & -1\\ 1 & 0} \mapsto \ts \quad\text{and}\quad \ft:=\mtx{1 & 1\\ 0 & 1} \mapsto \tt,
$$
and the well-known presentation $\SL{} = \langle \fs, \ft \mid \fs^4=1, (\fs \ft)^3 =\fs^2\rangle$ of
the
modular group. It was proved by Ng and Schauenburg in \cite{NS4} that the kernel of this projective
representation of $\SL{}$ is
a congruence subgroup of level $N$ where $N$ is the order of $\tt$. Moreover, both $\ts$ and $\tt$ are
matrices
over
$\BQ_N$. For  factorizable semisimple Hopf algebras, the corresponding result was proved previously by
Sommerh\"auser
and
Zhu \cite{SZh}.

The projective representation  $\orho_\A$ can be lifted to an ordinary representation
 of $\SL{}$ which is called a \emph{modular representation of} $\A$ in \cite{NS4}. There are only
finitely
many modular representations of $\A$ but,  in general, none of them is a canonical choice. However, if
$\A$
is the Drinfeld center of a spherical fusion category, then $\A$ is
modular (cf. \cite{MugerII03}) and it admits a canonical modular representation defined over $\BQ_N$
whose kernel is a congruence
subgroup of level $N$ (cf. \cite{NS4}). The canonical modular representation of the module
category over the Drinfeld double of a semisimple Hopf algebra was shown to have a
congruence
kernel as well as Galois symmetry (see Theorem II (iii) and (iv)) in \cite{SZh}.

The second main theorem of this paper is to prove that
the congruence property and Galois symmetry holds for all modular representations of any modular
category.

\begin{CPMII}\label{t:I}
  Let $\A$ be a modular  category over any algebraically closed field $\k$ of characteristic zero with the set of
  isomorphism classes of simple objects $\Pi$,
  and Frobenius-Schur exponent $N$. Suppose $\rho: \SL{} \to \GLk{\Pi}$ is a modular representation of
  $\A$ where $\GLk{\Pi}$ denotes the group of invertible matrices over $\k$ indexed by $\Pi$. Set
  $s = \rho(\fs)$ and $t = \rho(\ft)$. Then:
  \begin{enumerate}
    \item [(i)]$\ker \rho$ is a congruence subgroup of level $n$ where $n =\ord(t)$. Moreover, $N
        \mid
        n \mid 12N$.
    \item[(ii)] $\rho$ is $\BQ_n$-rational, i.e. $\im \rho \le GL_\Pi(\BQ_n)$, where $\BQ_n
        =\BQ(\zeta_n)$ for some primitive $n$-th root of unity $\zeta_n \in \k$\,.
    \item[(iii)] For $\sigma \in \Gal(\BQ_n/\BQ)$, $\gs=\sigma(s)s\inv$ is a signed permutation
        matrix,
        and
    $$
      \s^2(\rho(\fg)) = \gs \rho(\fg) \gs\inv
    $$
    for all $\fg \in \SL{}$. In particular, if $(\gs)_{ij}=\es(i)\delta_{\hs(i)j}$ for some sign function $\es$ and  permutation $\hs$ on $\Pi$, then $\s^2(t_{ii}) = t_{\hs(i)\hs(i)}$ for all $i \in \Pi$.
    \item[(iv)] Let $a$ be an integer relatively prime to $n$ with an inverse $b$ modulo $n$.  For
        the
        automorphism $\s_a$ of $\BQ_n$ given by $\zeta_n \mapsto \zeta_n^a$,
         $$
         G_{\sigma_a} = t^a s t^b s t^a s\inv\,.
        $$
   \end{enumerate}
\end{CPMII}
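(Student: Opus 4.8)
The plan is to read everything off the explicit arithmetic of the pair $(s,t)$, upgrading the projective statement of \cite{NS4}. First I record the structural input: since $\ft$ acts by the ribbon twists, $t=\operatorname{diag}(\lambda_0,\dots,\lambda_p)$ with each $\lambda_i$ a root of unity; $s$ is symmetric; $\fs^4=1$ forces $s^2=C$ for the charge-conjugation permutation $C$ (so $C^2=1$); and $(\fs\ft)^3=\fs^2$ forces $(st)^3=s^2$, which after left multiplication by $s\inv$ becomes the single clean relation $t\,s\,t\,s\,t=s$, equivalently $s\,t\,s=t\inv s\,t\inv$. For the divisibility in~(i): composing $\rho$ with $\GLk{\Pi}\to\PGLk{\Pi}$ recovers $\orho_\A$, whose $\ft$-image $\tt$ has order $N$ by \cite{NS4}, so $N\mid\ord(t)=n$. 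Conversely, any two ordinary lifts of $\orho_\A$ differ by a linear character of $\SL{}$, and since $\SL{}^{\mathrm{ab}}\cong\BZ/12\BZ$ is generated by the class of $\ft$ while the cubing relation $(st)^3=s^2$ constrains the lifting scalar to a $12$th root of unity, one gets $t^{12N}=1$, i.e. $n\mid 12N$.

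Next I treat the Galois structure using the Verlinde formula recalled in the introduction. The normalized entries $\hat s_{ij}=s_{ij}/s_{0j}$ are the simultaneous eigenvalues of the commuting integer fusion matrices $\mathsf N_i=[N_{ij}^k]_{j,k}$; as these have rational characteristic polynomials, every $\sigma\in\Gal(\BQ_n/\BQ)$ permutes their common eigenbasis through one permutation $\hat\sigma$ of $\Pi$, so $\sigma(\hat s_{ij})=\hat s_{i,\hat\sigma(j)}$. Combining this with the symmetry of $s$ yields the Coste--Gannon law $\sigma(s_{ij})=\e_\sigma(i)\,s_{\hat\sigma(i),j}=\e_\sigma(j)\,s_{i,\hat\sigma(j)}$ for signs $\e_\sigma(\cdot)=\pm1$; in matrix form $\sigma(s)=P_\sigma s=s\,P_\sigma^{T}$ with $P_\sigma$ a signed permutation matrix. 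Hence $\gs=\sigma(s)s\inv=P_\sigma$ is a signed permutation matrix, the first claim of~(iii). Rationality~(ii) follows at once for $t$ (its entries are $n$th roots of unity), and for $s$ its entries lie in the abelian field $\BQ_N\subseteq\BQ_n$ of \cite{NS4}; that the conductor divides $n=\ord(t)$ rather than merely $12N$ is read off from the same signed-permutation analysis together with $N\mid n$.

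The crux is a Galois-twisted version of $t\,s\,t\,s\,t=s$. Writing $\lambda_i$ for the diagonal of $t$, the relation $s\,t\,s=t\inv s\,t\inv$ reads $\sum_k s_{ik}\lambda_k s_{kj}=\lambda_i\inv\lambda_j\inv s_{ij}$. I would prove the master identity $\sum_k s_{ik}\lambda_k^{b} s_{kj}=\lambda_i^{-a}\lambda_j^{-a}\,\sigma_a(s_{ij})$ for $ab\equiv1\pmod n$ by applying $\sigma_a$ to the Gauss-sum expression underlying this identity and using $\sigma_a(\lambda_k)=\lambda_k^{a}$ together with the column permutation law; the congruence $ab\equiv1$ is exactly what aligns the twist-exponents on the two sides. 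In matrix form this says $\sigma_a(s)=t^{a}s\,t^{b}s\,t^{a}$, whence $G_{\sigma_a}=\sigma_a(s)s\inv=t^{a}s\,t^{b}s\,t^{a}s\inv$, which is~(iv). Granting this, the conjugation law in~(iii) is checked on generators: on $\fs$ one has $\sigma^2(s)=P_\sigma^2 s=P_\sigma(P_\sigma s)=P_\sigma(s\,P_\sigma\inv)=\gs\,s\,\gs\inv$, using $P_\sigma s=s\,P_\sigma\inv$ (from $\sigma(s)=P_\sigma s=s\,P_\sigma^{T}$ and $P_\sigma^{T}=P_\sigma\inv$) and $P_{\sigma^2}=P_\sigma^2$; on $\ft$ it reduces to $\sigma^2(\operatorname{diag}(\lambda_i))=\gs\,\operatorname{diag}(\lambda_i)\,\gs\inv$, a short computation expressing that $\hat\sigma$ permutes the twists compatibly with $\lambda_i\mapsto\lambda_i^{a^2}$.

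For the congruence property~(i) the decisive point is that the group element $h_a:=\ft^{a}\fs\ft^{b}\fs\ft^{a}\fs\inv$ reduces modulo $n$ to the diagonal torus element $\left(\begin{smallmatrix}a&0\\0&b\end{smallmatrix}\right)$ of $\SL{n}$, while $\rho(h_a)=G_{\sigma_a}$ by~(iv); thus the external action of $\Gal(\BQ_n/\BQ)\cong(\BZ/n\BZ)^{\times}$ is realized internally by $\rho$ on lifts of the diagonal torus. Feeding these relations, $\ft^n=1$, and the $\SL{}$-relations into a presentation of $\SL{n}$, and seeding the argument with the projective congruence of \cite{NS4} (so that $\ker\orho_\A$ already contains the principal congruence subgroup of level $N\mid n$), one concludes that $\rho$ is trivial on $\Gamma(n)$, i.e. factors through $\SL{n}$ with congruence kernel of level exactly $n=\ord(t)$. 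I expect the main obstacle to be the master identity $\sigma_a(s)=t^{a}s\,t^{b}s\,t^{a}$ of the third step, since it is precisely here that the Gauss-sum and Galois arithmetic of the modular category must be made to work; a secondary difficulty is converting the level-$N$ projective congruence into the honest level-$n$ statement while tracking the lifting character and the anomaly scalar, which is what ultimately pins down $N\mid n\mid 12N$.
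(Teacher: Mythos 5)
Your proposal breaks down at its crux, and the surrounding logic is circular. Everything hinges on the ``master identity'' $\s_a(s)=t^a s\,t^b s\,t^a$, which you propose to obtain by applying $\s_a$ to the entrywise Gauss-sum form of $t\,s\,t\,s\,t=s$ and invoking the signed-permutation law $\s_a(s)=\gs s$. That computation does not close up: applying $\s_a$ to $sts=t\inv s t\inv$ yields $\gs s\, t^a s\, \gs\inv = t^{-a}\gs s\, t^{-a}$, i.e.\ relations involving $s\,t^a s$, and there is no way to solve these for $\s_a(s)$ --- equivalently, no way to make $s\,t^b s$ appear --- unless one already knows that $(s,t)$ satisfies the defining relations of $\SL{n}$, which is exactly the congruence property (i) you intend to deduce \emph{from} (iv). This is precisely the obstruction that stopped Coste and Gannon \cite{CG}, who could complete such an argument only when the order of the Dehn twist is odd. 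The paper's route is entirely different and non-circular: (i) and (ii) are proved first, with no Galois input, by a cohomological lifting argument --- Beyl's computation of the Schur multiplier of $\SL{m}$ (Theorem \ref{t:Beyl}), the lifting Lemma \ref{l:existence}, Theorem \ref{t:congruence lifting}, and Lemma \ref{l:app1} to identify the level with $\ord(t)$; then the Galois symmetry is proved for the canonical representation of a Drinfeld center via generalized Frobenius--Schur indicators and their $\SL{}$-equivariance (Lemma \ref{l:double case}, whose proof occupies Section \ref{s:Quantum_double}); finally the general case is reached by applying Lemma \ref{l:double case} to $Z(\A)$, with simple objects indexed by $\Pi\times\Pi$ and $\BT_{ij,ij}=\w_i/\w_j$, which at first yields only $t_{\hs(i)\hs(i)}=\s^2(t_{ii})\,\b$ for an undetermined scalar $\b$; killing $\b$ requires the separate arguments $\b^3=1$ and $\b^4=1$, the latter resting on Lemma \ref{l:iha1}, which again uses (i). Nothing in your sketch substitutes for this machinery.

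Beyond the missing core, the architecture is circular: you derive (i) from (iv) by ``feeding the relations into a presentation of $\SL{n}$'' (no presentation is exhibited, and verifying the relations of $\SL{n}$ for $(s,t)$ is not a routine step), while your verification of (iii) on the generator $\ft$ --- that is, $\gs t \gs\inv = t^{a^2}$ --- is exactly the statement that the torus element acts correctly, which in turn can only be checked once one knows $\rho$ factors through $\SL{n}$. There are also two factual slips. First, $\fs^4=\id$ forces only $s^2=\pm C$, not $s^2=C$. Second, and more seriously for (ii), the entries of the \emph{normalized} matrix $s$ do not lie in $\BQ_N$ in general; only the unnormalized $\ts$ is $\BQ_N$-rational. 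In Example \ref{ex:fail T sym} (Fibonacci) one has $N=5$, yet $s=\ts/\sqrt{2+\varphi}$ has entries involving $\sqrt{2+\varphi}=\zeta_{20}+\zeta_{20}\inv$, which lies in $\BQ_{20}$ but not in $\BQ_5$, and the representation has level $60$. So (ii) cannot be ``read off''; in the paper it follows from (i) together with \cite[Thm.~7.1]{NS4}. The portions of your proposal that are sound --- $N\mid n$, $n\mid 12N$ via lifting characters and the root-of-unity anomaly, the signed-permutation structure of $\gs$, and $\s^2(s)=\gs s \gs\inv$ --- are the easy parts, and they agree with the paper.
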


We now return to the modular tensor category $\CC_V$ associated to a rational, $C_2$-cofinite and self dual
vertex operator algebra $V$. This yields a projective representation of $SL_2(\Z)$ on space spanned by the
equivalence classes of irreducible $V$-modules. We show in Theorem \ref{iden}  that the representation $\rho_V$
of $SL_2(\Z)$ is a modular representation  of $\CC_V$. This implies that the kernel of $\rho_V$ is a congruence
subgroup of $SL_2(\Z).$

Although the congruence property proved in Theorem II is motivated by solving the congruence property
conjecture on the trace functions of vertex operator algebras, the result has its own importance.  We will
discuss this in the rest of the introduction.

It was also shown in \cite{SZh} that the (unnormalized) $T$-matrix $\tt$ of the module category over a factorizable Hopf
algebra
also enjoys the Galois symmetry $\s^2(\tt) = \gs \tt \gs\inv$ for any $\s \in \Gal(\BQ_N/\BQ)$. However, this extra
 symmetry does not hold for a general modular  category $\A$ (see Example \ref{ex:fail T sym}). This
condition is, in fact, related to the order of the quotient of the Gauss sums, called the
\emph{anomaly}, of $\A$. It is proved in Proposition \ref{p:anomaly} that Galois symmetry of the
$T$-matrix is equivalent to the condition that the anomaly is a fourth root of unity. We will prove in Proposition
\ref{p:anomaly1} that the anomaly of any \emph{integral} modular category  is always a fourth root of
unity. Therefore, the $T$-matrix of any integral modular category enjoys the Galois symmetry. For a
\emph{weakly integral} modular category, such as the Ising model, the anomaly is always an eighth root of
unity (Theorem \ref{t:weakly integral}).

  Using Theorem II, we  uncover some relations among the global dimension $\dim \A$, the Frobenius-Schur
  exponent $N$ and the order of the anomaly $\a$ of a modular  category $\A$. We define $J_\A =
  (-1)^{1+\ord \a}$ to record the parity of the order of the anomaly. If $N$ is not a multiple of $4$,
  then
  $J_\A \dim \A$ has a square root in $\BQ_N$. If, in addition, $\dim \A$ is an odd integer, then $J_\A$
  coincides with the Jacobi symbol $\Jac{-1}{\dim \A}$. The consequence of this observation is a result
  closely related to the Cauchy theorem of integral fusion category.

  The organization of this paper is as follows: Section \ref{s:prelim} covers some basic definitions,
  conventions and preliminary results on spherical fusion categories and modular categories. In Section
  \ref{s:tI i and ii}, we prove the congruence property, Theorem
  II
  (i) and (ii), by proving a lifting theorem of modular projective representations with congruence
  kernels. In Section \ref{s:voa}, we prove the associated representation of modular invariance of trace
  functions of a rational, $C_2$-cofinite vertex operator algebra $V$ is a modular representation of its module
  category. Using  Theorem II (i) and (ii) obtained in Section \ref{s:tI i and ii}, we prove Theorem I: The
  trace functions of $V$ are modular forms.
  In Section \ref{s:Gal1}, we assume the technical Lemma \ref{l:double case} to prove the Galois
  symmetry
  of modular categories as well as RCFTs, Theorem II (iii) and (iv). Section \ref{s:Quantum_double} is devoted
  to the
  proof
  of Lemma \ref{l:double case} by using generalized Frobenius-Schur indicators. In Section
  \ref{s:anomaly},
  we use the congruence property and Galois symmetry of modular categories (Theorem II) to uncover some
  arithmetic relations among the global dimension, the Frobenius-Schur exponent and the anomaly of a
  modular category. In particular, we determine the order of the anomaly of a modular category
  satisfying
  certain integrality conditions.

\section{Basics of modular tensor categories}\label{s:prelim}
In this section, we will collect some conventions and  preliminary results on spherical fusion
categories and modular categories. Most of these results are
quite well-known, and the readers are referred to  \cite{Turaev, BaKi, NS1, NS2, NS3, NS4} and the
references therein.

Throughout this paper, $\k$ is always assumed to be an algebraically closed field of characteristic
zero,
and the group of invertible matrices over a commutative ring $K$ indexed by $\Pi$ is denoted by $GL_\Pi(K)$,
and we will write $\PGL_\Pi(K)$ for its associated projective linear group. If $\Pi=\{1, \dots,
r\}$
for some positive integer $r$, then $GL_\Pi(K)$ (resp. $PGL_\Pi(K)$) will be denoted by the standard
notation $GL_r(K)$ (resp. $PGL_r(K)$) instead.

For any primitive $n$-th root of unity $\zeta_n \in \k$,  $\BQ_n:=\BQ(\zeta_n)$ is the smallest
subfield of $\k$ containing all the $n$-th roots of unity in $\k$. Recall that $\GalQ{n} \cong U(\BZ_n)$,
the group of units of $\BZ_n$. Let $a$ be an integer relatively prime to $n$. The associated $\s_a \in
\Gal(\BQ_n/\BQ)$  is defined by
$$
\s_a(\zeta_n)=\zeta_n^a\,.
$$

Let $\BQA =\bigcup_{n \in \BN} \BQ_n$, the abelian closure of $\BQ$ in $\k$. Since $\BQ_n$ is Galois over
$\BQ$, $\s(\BQ_n) =\BQ_n$ for
all automorphisms $\s$ of $\BQA$. Moreover, the restriction map $\AQab \xrightarrow{res} \GalQ{n}$ is
surjective for all positive integers $n$. Thus, for any integer $a$ relatively prime to $n$, there exists
$\s \in \AQab$ such that $\s|_{\BQ_n} = \s_a$.

\subsection{Spherical fusion categories} \label{ss:sphcat}
In a left rigid monoidal category $\CC$ with tensor product $\o$ and unit object $\1$, a left dual
$V\du$
of $V \in \CC$ with morphisms $\db_V: \1 \to V \o V\du$ and $\ev_V: V\du \o V \to \1$  is denoted by the
triple
 $(V\du, \db_V, \ev_V)$. The left duality can be extended to a monoidal functor $(-)\du : \CC \to
 \CC^{\op}$, and so $(-)\bidu: \CC \to \CC$ defines a monoidal equivalence. Moreover we can choose $\1\du =
 \1$. A
 \emph{pivotal structure} of $\CC$ is an isomorphism $j:\Id_\CC \to (-)\bidu$ of monoidal functors. One
 can
 respectively define the  left and the right pivotal traces of an endomorphism $f: V \to V$ in $\CC$ as
\begin{gather*}
\ptrl(f)=\bigg(\1\xrightarrow{\db_{V\du}} V\du\o V\bidu\xrightarrow{\id\o j_V\inv} V\du\o
V\xrightarrow{\id \o f} V\du\o V \xrightarrow{\ev_V}\1\bigg) \text{ and } \\
\ptrr(f)=\bigg(\1\xrightarrow{\db_V}V\o
V\du\xrightarrow{f\o \id}V\o V\du \xrightarrow{j_V\o \id} V\bidu \o V\du
\xrightarrow{\ev_{V\du}}\1\bigg)\,.
\end{gather*}
The pivotal structure is called \emph{spherical} if the two pivotal traces coincide for all endomorphisms
$f$ in $\CC$.

A \emph{pivotal} (resp. \emph{spherical}) \emph{category} $(\CC, j)$ is a left rigid monoidal category
$\CC$ equipped with a pivotal (resp. spherical) structure $j$. We will simply denote the pair $(\CC, j)$
by
$\CC$ when there is no ambiguity. The left and the right pivotal dimensions of $V\in\CC$ are defined as
$d_\ell(V)=\ptr^\ell(\id_V)$ and $d_r(V)=\ptr^r(\id_V)$ respectively. In a
spherical category, the pivotal traces and dimensions will be denoted by
$\ptr(f)$ and $d(V)$ (or $\dim V$), respectively.

A \emph{fusion category} $\CC$ over the field $\k$ is an abelian $\k$-linear semisimple (left) rigid
monoidal category with a simple unit object $\1$, finite-dimensional morphism spaces and finitely many
isomorphism classes of simple objects (cf. \cite{ENO}). We will denote by $\Pi_\CC$ the set of
isomorphism
classes of simple objects of $\CC$, and by $0$ the isomorphism class of $\1$, unless stated otherwise. If
$i
\in \Pi_\CC$, we write $i^*$ for the (left) dual of the isomorphism class $i$. Moreover, $i \mapsto i^*$
defines a permutation of order $\le 2$ on $\Pi_\CC$.

In a spherical fusion category $\CC$  over $\k$, $d(V)$ can be identified with a scalar in $\k$ for $V
\in \CC$. We abbreviate $d_i \in \k$ for the pivotal dimension of $i \in \Pi_\CC$. By \cite[Lem.
2.8]{MuI}, $d_i = d_{i^*}$ for all $i \in \Pi_\CC$. The global dimension $\dim \CC$ of $\CC$ is defined
by
$$
\dim \CC = \sum_{i \in \Pi_\CC} d_i^2\,.
$$

A pivotal category $(\CC, j)$ is said to be \emph{strict} if $\CC$ is a strict monoidal category and the
pivotal structure $j$ as well as the canonical isomorphism $(V \o W)\du �\to  W\du \o  V\du$ are
identities. It has been proved in \cite[Thm. 2.2]{NS1} that every pivotal category is \emph{pivotally
equivalent} to a strict pivotal category.

\subsection{Representations of the modular group}\label{ss:SL2}
The modular group $\SL{}$ is the group of $2\times 2$ integral matrices with determinant 1. It is
well-known that the modular group is generated by
\begin{equation}\label{eq:relation}
\fs= \mtx{0 & -1\\ 1 & 0} \quad\text{and}\quad \ft=\mtx{1 & 1\\ 0 & 1} \text{ with defining relations }
(\fs\ft)^3=\fs^2 \text{ and } \fs^4=\id.
\end{equation}
We denote by $\G(n)$  the kernel of the reduction modulo $n$ epimorphism  $\pi_n:\SL{} \to \SL{n}$. A
subgroup $L$ of $\SL{}$ is called a  \emph{congruence subgroup of level $n$} if $n$ is the least
positive
integer for which $\G(n) \le L$.

For any pair of matrices $A, B$ in $\GLk{r}$, $r \in \BN$, satisfying the conditions
$$
A^4 =\id\quad\text{and}\quad(AB)^3=A^2,
$$
one can define a representation $\rho: \SL{} \to \GLk{r}$ such that  $\rho(\fs)=A$ and $\rho(\ft)=B$ via
the presentation \eqref{eq:relation} of $\SL{}$.

Suppose $\orho: \SL{} \to \PGLk{r}$ is a projective representation of $\SL{}$. A \emph{lifting} of
$\orho$
is an \emph{ordinary} representation $\rho: \SL{} \to \GLk{r}$ such that $\eta \circ \rho =\orho$, where
$\eta: \GLk{r} \to \PGLk{r}$ is the natural surjection map. One can always lift $\orho$ to a
representation
$\rho: \SL{} \to \GLk{r}$ as follows: Let $\hat A, \hat B \in \GLk{r}$ such that
$\orho(\fs)=\eta(\hat A)$ and $\orho(\ft)=\eta(\hat B)$. Then
$$
\hat A^4 = \mu_s \id\quad \text{and}\quad (\hat A \hat B)^3 = \mu_t \hat A^2
$$
for some scalars $\mu_s, \mu_t \in \k^\times$. Take $\l, \zeta \in \k$ such that $\l^4 = \mu_s$ and
$\zeta^3 = \frac{\mu_t}{\l}$, and set $A = \frac{1}{\l} \hat A$ and $B = \frac{1}{\zeta}\hat B$. Then we
have
$$
A^4 = \id \quad \text{and}\quad (A B)^3 = A^2\,.
$$
Therefore, the assignment $\rho: \fs \mapsto A, \ft \mapsto B$ defines a lifting of $\orho$.

Let $\rho$ be a lifting of $\orho$. Suppose $x\in \k$ is a $12$-th root of unity. Then the assignment
\begin{equation}\label{eq:parametrize}
  \rho_x: \fs \mapsto \frac{1}{x^3} \rho(\fs), \quad \ft\mapsto x \rho(\ft)
\end{equation}
also defines a lifting of $\orho$.
If $\rho': \SL{} \to \GLk{r}$ is another lifting of $\orho$, then
$$
\rho'(\fs) = a \rho(\fs) \quad \text{and}\quad \rho'(\ft) = b \rho(\ft)
$$
for some $a, b \in \k^\times$. It follows immediately from \eqref{eq:relation} that $a^4=1$ and $(a
b)^3=a^2$. This implies $b^{12}=1$ and $b^{-3} = a$. Therefore, $\rho'  = \rho_{b}$ and so $\orho$ has
at
most 12 liftings.

For any $12$-th root of unity $x \in \k$, the assignment $\chi_x : \fs \mapsto x^{-3}, \ft \mapsto x$
defines a linear character of $\SL{}$. It is straightforward to check that
$\chi_x \o \rho$ is isomorphic to $\rho_x$ as representations of $\SL{}$. Therefore, the lifting of $\orho$
is
\emph{unique} up to a linear character of $\SL{}$.

\subsection{Modular Categories}\label{ss:MTC}
Following \cite{Kassel}, a \emph{twist}  (or \emph{ribbon structure}) of a left rigid braided monoidal
category $\CC$ with a braiding $c$ is an automorphism $\theta$ of the identity functor $\Id_\CC$
satisfying
$$
\theta_{V \o W} = (\theta_V \o \theta_W)\circ c_{W, V}\circ  c_{V, W}, \quad \theta\du_V = \theta_{V\du}
$$
for $V, W \in \CC$. Associated to the braiding $c$ is the Drinfeld isomorphism $u: \Id_\CC \to (-)\bidu$.
When $\CC$ is a braided fusion category over $\k$, there is a one-to-one correspondence between twists
$\theta$ and spherical  structures $j$ of $\CC$ given by $\theta = u\inv j$ (cf. \cite[p38]{NS3} for more
details).

A \emph{modular tensor category} over $\k$ (cf.  \cite{Turaev, BaKi}), also simply called a
modular category, is a braided spherical fusion category $\A$ over $\k$ such that the $S$-matrix  of
$\A$
defined by
$$
\ts_{ij} = \ptr(c_{V_j, V_{i^*}} \circ c_{V_{i^*}, V_j})\,
$$
is non-singular, where $V_j$ denotes an object in the class $j \in \Pi_\A$. In this case, the associated
ribbon structure $\theta$ is of finite order $N$ (cf. \cite{Vafa88, BaKi}). Let $\theta_{V_i} = \w_i
\id_{V_i}$ for some $\w_i \in \k$. Since $\theta_\1 = \id_\1$, $\w_0 = 1$. The $T$-\emph{matrix} $\tilde{t}$ of
$\A$ is defined by $\tt_{ij} = \delta_{ij} \w_j$ for $i,j \in \Pi_\A$.  It is immediate to see that
$\ord(\tt) = N$ is finite, which is called the \emph{Frobenius-Schur exponent} of $\A$ and denoted by $\FSexp(\A)$ (cf.
\cite[Thm. 7.7]{NS3}).

The matrices $\ts$, $\tt$ of a modular category $\A$ satisfy the conditions:
\begin{equation}\label{eq:STrelations}
(\ts \tt)^3 = p_\A^+ \ts^2, \quad \ts^2=p_\A^+ p_\A^- C, \quad C\tt=\tt C,\quad C^2=\id,
\end{equation}
where $p_\A^{\pm} = \sum_{i\in \Pi_\A} d_i^2 \w_i^{\pm 1} $ are called the \emph{Gauss sums}, and
$C=[\delta_{ij^*}]_{i,j \in \Pi_\A}$ is called the \emph{charge conjugation matrix} of $\A$.
The quotient $\frac{p_\A^+}{p_\A^-}$
is a  root of unity (cf. \cite[Thm. 3.1.19]{BaKi} or \cite{Vafa88}), and
\begin{equation}\label{eq:gausssum}
p_\A^+ p_\A^-  =\dim \A \ne 0.
\end{equation}
Moreover, $\ts$ satisfies
\begin{equation}\label{eq:S}
\ts_{ij} = \ts_{ji} \quad \text{and}\quad \ts_{ij^*} = \ts_{i^*j}
\end{equation}
for all $i, j \in \Pi_\A$.

The relations \eqref{eq:STrelations} imply that
\begin{equation}\label{eq:projrep}
  \orho_\A\colon \fs\mapsto \eta(\ts),\quad \ft\mapsto \eta(\tt)
\end{equation}
defines a projective representation of $\SL{}$, where $\eta:\GLk{\Pi_\A}\to \PGLk{\Pi_\A}$ is the
natural
surjection.   By \cite[Thm. 6.8]{NS4}, $\ker \orho_\A$ is a congruence subgroup of level $N$.

It is well known that $\ol\rho_\A$ can be lifted to an ordinary representation (cf. \cite[Remark 3.1.9]{BaKi} or Section \ref{ss:SL2}).
Following \cite{NS4}, a lifting $\rho$ of $\orho_\A$ is called a \emph{modular representation} of $\A$.
By \eqref{eq:gausssum}, for any  6-th root $\zeta\in \k$ of $\frac{p_\A^+}{p_\A^-}$,
$\left(\frac{p_\A^+}{\zeta^3}\right)^2=\dim \A$. It follows from \eqref{eq:STrelations} that  the
assignment
\begin{equation}\label{eq:repC}
\rho^\zeta: \fs \mapsto \frac{\zeta^3}{p_\A^+} \ts, \quad \ft \mapsto \frac{1}{\zeta} \tt
\end{equation}
defines a  modular representation of $\A$.

Thus, if $\rho$ is a modular representation of $\A$, it follows from  Section \ref{ss:SL2} that $\rho
=
\rho^\zeta_x$ for some $12$-th root of unity $x \in \k$. Thus $\rho(\fs)^2=\pm C$. More precisely,
$\rho(\fs)^2=x^6 C$.

A modular category $\A$ is called \emph{anomaly-free} if the quotient $\gq{\A}=1$. The terminology
addresses the
associated anomaly-free TQFT with such a modular category  \cite{Turaev}. In this spirit, we will simply
call
the quotient $\a_\A:=\gq{\A}$ the \emph{anomaly} of $\A$.

If $\A$ is an anomaly-free modular category, then $p_\A^+$ is a \emph{canonical} choice of square root
of
$\dim \A$, and hence a \emph{canonical} modular representation of $\A$ is determined by the assignment
\begin{equation}\label{eq:canrep}
 \rho_\A :\fs \mapsto \frac{1}{p_\A^+} \ts, \quad \ft \mapsto \tt\,.
\end{equation}

For any modular category $\A$ over $\BC$,  $\dim \A >0$ (cf. \cite{ENO}). The \emph{central charge} $\bc$ of
$\A$
is a rational number modulo 8  defined by $\exp\left(\frac{\pi i \bc}{4}\right)=\frac{p_\A^+}{\sqrt{\dim
\A}}$
where $\sqrt{\dim \A}$ denotes the positive square root of $\dim \A$, and so the anomaly $\a$ of $\A$ is
given by
\begin{equation}\label{eq:charge}
 \a= \exp\left(\frac{\pi i \bc}{2}\right)\,.
\end{equation}
We will show in Theorem \ref{iden} that the central charge $\bc$ of the modular category $\CC_V$ is equal to
central charge $c$ of $V$ modulo 4.
\begin{remark}
{\rm
The $S$ and $T$-matrices of a modular category are preserved by equivalence of braided pivotal
categories
over $\k$, and so are the dimensions of simple objects, the global dimension, the Gauss sums as well as
the
anomaly.  By the last paragraph of Section \ref{ss:sphcat}, without loss of generality, we may assume
that the underlying pivotal category of a modular category over $\k$ is \emph{strict}. }
\end{remark}

\subsection{Quantum doubles of spherical fusion categories} \label{ss:center} Let $\CC$ be a strict
monoidal category. The left Drinfeld center $Z(\CC)$ of $\CC$ is a category whose objects are pairs
 $\mathbf{X}=(X, \hbr_X)$, where $X$ is an object of $\CC$, and the \emph{half-braiding} $\hbr_{X}(-):
 X
 \o (-)\;
\rightarrow \; (-) \o X$ is a natural isomorphism satisfying the
properties $\hbr_X(\1)=\id_X$ and
$$
(\id_V\o \hbr_X(W))\circ (\hbr_X(V) \o \id_W)  =  \hbr_X(V \o W)
$$
for all $V, W \in \CC$. It is well-known that $Z(\CC)$ is a braided strict monoidal category (cf.
\cite{Kassel}) with unit object $(\1, \s_\1)$ and
tensor product
$
  (X, \hbr_X)\o (Y, \hbr_Y) := (X \o Y, \hbr_{X \o Y}),
$
where
$$
\hbr_{X \o Y}(V) =  (\hbr_X(V) \o \id_Y) \circ (\id_X \o \hbr_Y(V)),  \quad \hbr_\1(V)=\id_V
$$
for $V \in \CC$.
The  forgetful functor $Z(\CC)\to\CC, \bX=(X, \s_X) \mapsto X$, is a strict monoidal functor.

When $\CC$ is a (strict) spherical fusion category over $\k$, by M\"uger's result \cite{MugerII03}, the
center $Z(\CC)$ is a modular category over $\k$ with the inherited spherical structure from
$\CC$. In addition,
$$
p_{Z(\CC)}^+ = \dim \CC = p_{Z(\CC)}^-\,.
$$
Therefore, $Z(\CC)$ is anomaly-free and it admits a canonical modular representation $\rho_{Z(\CC)}$
described in \eqref{eq:canrep}. In particular,
\begin{equation}\label{eq:can normalization}
\rho_{Z(\CC)}(\ft) = \tt\quad \text{and}\quad \rho_{Z(\CC)}(\fs) = \frac{1}{\dim \CC} \ts,
\end{equation}
which is called the \emph{canonical normalization} of the $S$-matrix of $Z(\CC)$. By \cite[Thm. 6.7 and 7.1
]{NS4},
$\ker \rho_{Z(\CC)}$ is a congruence subgroup of level $N$, and $\im
\rho_{Z(\CC)} \le \GLR{\Pi_{Z(\CC)}}{\BQ_N}$, where $N  = \ord(\tt)$.

\section{Rationality and Kernels of Modular Representations}\label{s:tI i and ii}
In this section, we will prove the congruence property given in (i) and (ii) of Theorem II. Recall that
a projective representation $\orho: G \to \PGLk{r}$ of a group $G$ determines a cohomology class $\kappa_{\orho}
\in H^2(G, \k^\times)$. For any section $\iota:  \PGLk{r} \to \GLk{r}$ of the natural surjection
$\eta:\GLk{r} \to \PGLk{r}$, the function $\g_\iota: G \times G \to \k^\times$ given by
$$
\rho_\iota(ab) = \g_\iota(a,b) \rho_\iota(a)\rho_\iota(b)
$$
determines a 2-cocycle in $\kappa_{\orho}$, where $\rho_\iota = \iota \circ \orho$. The cohomology class
$\kappa_{\orho}$ is trivial if, and only if, $\orho$ can be \emph{lifted} to a linear representation $\rho: G
\to \GLk{r}$, i.e. $\eta \circ \rho =\ol \rho$ (cf. \cite[p72]{Kap}).

Let $\pi: L \to G$ be a group homomorphism. For any 2-cocycle $\g \in Z^2(G, \k^\times)$,
$\g\circ (\pi \times \pi) \in Z^2(L, \k^\times)$. The assignment $\g \mapsto \g\circ (\pi \times
\pi)$ of 2-cocycles induces the group homomorphism  $\pi^*:H^2(G, \k^\times) \to H^2(L,
\k^\times)$, which is called the inflation map along $\pi$. In particular, $\pi^*\kappa_{\orho} \in H^2(L, \k^\times)$ is associated with the
projective representation $\orho \circ \pi: L \to \PGLk{r}$.

The homology group $H_2(G, \BZ)$ is often called the \emph{Schur multiplier} of $G$ \cite{Weibhobk}.
Since $\k^\times$ is a divisible abelian group, $H^2(G, \k^\times)$ is naturally isomorphic to
$\Hom(H_2(G,
\BZ), \k^\times)$ for any group $G$.  This natural isomorphism allows us to summarize the result of
Beyl \cite[Thm. 3.9 and Cor. 3.10]{Beyl} on the Schur multiplier of $\SL{m}$ as the following
theorem. A proof of the statement is provided for the sake of completeness.  The case for odd integers $m$  was originally proved by Mennicke \cite{Menn}.
\begin{thm} \label{t:Beyl}
Let $\k$ be an algebraically closed field of characteristic zero, and $m$ an integer greater than 1.
Then
$H^2(\SL{m}, \k^\times)\cong \BZ_2$ if $4
\mid m$, and is trivial otherwise. Moreover, the image of the inflation map $\pi^*:H^2(\SL{m},
\k^\times)
\to H^2(\SL{2m}, \k^\times)$ along the natural reduction map $\pi: \SL{2m} \to \SL{m}$ is always
trivial.
\end{thm}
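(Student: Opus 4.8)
The statement is presented as a dictionary translation of the Schur-multiplier computations of Beyl \cite{Beyl} and Mennicke \cite{Menn}, so the plan is to deduce it from those integral homology results through the natural isomorphism $H^2(G,\k^\times)\cong \Hom(H_2(G,\BZ),\k^\times)$ recalled in the text. Since I will need this isomorphism to be functorial, I would first note that it arises from the universal coefficient short exact sequence $0\to\operatorname{Ext}^1(H_1(G,\BZ),\k^\times)\to H^2(G,\k^\times)\to\Hom(H_2(G,\BZ),\k^\times)\to 0$, which is natural in $G$; because $\k^\times$ is divisible, hence injective as a $\BZ$-module, the $\operatorname{Ext}^1$ term vanishes and the right-hand evaluation map is a natural isomorphism.

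Applying this with $G=\SL{m}$ reduces the order statement to Beyl's computation of the integral Schur multiplier $H_2(\SL{m},\BZ)$, which is $\BZ_2$ when $4\mid m$ and trivial otherwise, the odd case being Mennicke's. In the first case $\Hom(\BZ_2,\k^\times)$ has exactly two elements, since a homomorphism sends the generator to an element of order dividing $2$ and $\k^\times$ contains the unique such nontrivial element $-1$; hence $H^2(\SL{m},\k^\times)\cong\BZ_2$. In the second case the $\Hom$-group is trivial, and so is $H^2(\SL{m},\k^\times)$.

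For the functoriality I would exploit the contravariance packaged in the natural isomorphism. Naturality identifies the inflation $\pi^*\colon H^2(\SL{m},\k^\times)\to H^2(\SL{2m},\k^\times)$ with the precomposition map $\Hom(H_2(\SL{m},\BZ),\k^\times)\to \Hom(H_2(\SL{2m},\BZ),\k^\times)$, $f\mapsto f\circ\pi_*$, where $\pi_*\colon H_2(\SL{2m},\BZ)\to H_2(\SL{m},\BZ)$ is the homomorphism induced by reduction. The image of $\pi^*$ is then the set of characters $f\circ\pi_*$, which is trivial exactly when $\im\pi_*\subseteq\bigcap_f\ker f$. Because $\k^\times$ contains all roots of unity, the characters $f$ separate the points of the finite abelian group $H_2(\SL{m},\BZ)$, so that intersection is $0$ and triviality of $\im\pi^*$ is equivalent to $\pi_*=0$. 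If $4\nmid m$ the target $H_2(\SL{m},\BZ)$ already vanishes and $\pi_*=0$ is automatic; the only substantive case is $4\mid m$, where both multipliers are $\BZ_2$, and here I would invoke Beyl's Corollary~3.10, which says that the reduction-induced map on Schur multipliers is nonetheless zero.

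The argument above is therefore a purely formal dualization, and the genuine content lies in the cited homological input — in particular Beyl's assertion that reduction annihilates the $\BZ_2$ when $4\mid m$. I would treat that as a black box rather than reprove it: establishing it from scratch requires the Chinese remainder decomposition of $\SL{m}$ into its $p$-primary factors, together with a delicate analysis of the Schur multiplier of $SL_2(\BZ/2^k\BZ)$ and its behavior under reduction modulo powers of $2$, which is exactly what Beyl's paper accomplishes. The main obstacle is thus not in our reduction but in this underlying computation, which I would not attempt to duplicate.
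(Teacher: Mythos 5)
Your proposal is correct and matches the paper's treatment: the paper likewise states this theorem as a translation of Beyl's Theorem 3.9 and Corollary 3.10 (and Mennicke's odd case) through the natural isomorphism $H^2(G,\k^\times)\cong \Hom(H_2(G,\BZ),\k^\times)$ afforded by the divisibility of $\k^\times$, with the underlying Schur-multiplier computations cited rather than reproved. Your added details (vanishing of the $\operatorname{Ext}^1$ term, naturality of the evaluation map, and the character-separation argument identifying triviality of $\im \pi^*$ with $\pi_*=0$) are correct fillings-in of the same argument.
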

\begin{proof}
  The first statement is a direct consequence of \cite[Thm. 3.9]{Beyl}. For the second statement, it suffices to consider the case $m=2^a q$ with $a \ge 2$ and $q$ odd. Then there are split surjections $p: \SL{m} \to \SL{2^a}$ and $p': \SL{2m} \to \SL{2^{a+1}}$ by the Chinese Remainder Theorem such that the diagram of group homomorphisms commutes:
  $$
  \xymatrix{
    \SL{2m} \ar[r]^-{p'}\ar[d]_{\pi} & \SL{2^{a+1}} \ar[d]^-{\pi'}\\
    \SL{m} \ar[r]^-{p} & \SL{2^a} \\
  }
  $$
  where $\pi'$ is the reduction map. Applying the functor $H^2(-, \k^\times)$ to this commutative diagram, we obtain the following commutative diagram of abelian groups:
  $$
  \xymatrix{
    H^2(\SL{2m}, \k^\times)  & \ar[l]_-{(p')^*} H^2(\SL{2^{a+1}}, \k^\times)  \\
     H^2(\SL{m}, \k^\times)\ar[u]^{\pi^*}  & \ar[l]^-{p^*} H^2(\SL{2^{a}}, \k^\times)\ar[u]_-{(\pi')^*} \\
  }\,.
  $$
  Since $p$ and $p'$ are split surjections, both  $p^*$ and $(p')^*$ are injective. Hence, by the first statement, they are isomorphisms. By \cite[Cor. 3.10]{Beyl}, $(\pi')^*$ is trivial, and so is $\pi^*$.
\end{proof}

Theorem \ref{t:Beyl} is essential to the proof of the following  lifting lemma for a projective representations of
$\SL{}$.
 \begin{lem}\label{l:existence}
   Suppose $\orho: \SL{} \to \PGLk{r}$ is a projective representation for some positive integer $r$ such
   that $\ker \orho$ is a congruence subgroup of level $n$. Let $\orho_n: \SL{n} \to \PGLk{r}$ be the
   projective representation which satisfies $\orho=\orho_n \circ \pi_n$, where $\pi_n :\SL{} \to \SL{n}$
   is the
   reduction modulo $n$ map, and $\kappa$ denotes the associated 2nd cohomology class in $H^2(\SL{n},
   \k^\times)$. Then
  \begin{enumerate}
    \item[(i)] the class $\kappa$ is trivial if, and only if, $\orho$ admits a lifting whose kernel
        is a congruence subgroup of level $n$.
    \item[(ii)] If $\kappa$ is not trivial, then $4 \mid n$ and $\orho$ admits a lifting whose kernel
        is a congruence subgroup of level $2n$.
  \end{enumerate}
  In particular, there exists a lifting $\rho$ of $\orho$ such that $\ker \rho$ is a congruence subgroup
  containing $\G(2n)$.
 \end{lem}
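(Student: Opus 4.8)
The plan is to convert the lifting problem into a statement about the cohomology class $\kappa$, and then to transport linear lifts back and forth along the reduction homomorphisms $\pi_n$ and $\pi_{2n}$. Throughout I will use the dictionary recalled just before the lemma: a projective representation has trivial class in $H^2$ precisely when it admits an ordinary linear lift, and inflation along a homomorphism $\pi$ carries the class of $\orho_n$ to the class of $\orho_n\circ\pi$. The only additional ingredients are two standard facts about congruence subgroups, namely $\Gamma(m)\le\Gamma(\ell)$ iff $\ell\mid m$, and (Wohlfahrt's theorem) a congruence subgroup of level $\ell$ contains $\Gamma(m)$ iff $\ell\mid m$. I will use the second fact to upgrade mere containments $\Gamma(m)\le\ker\rho$ into exact determinations of the level of $\ker\rho$.

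For the forward direction of (i): if $\kappa$ is trivial, then $\orho_n$ lifts to a linear representation $\rho_n\colon\SL{n}\to\GLk{r}$, and $\rho:=\rho_n\circ\pi_n$ is a linear lift of $\orho$ with $\Gamma(n)=\ker\pi_n\le\ker\rho$. Since any lift satisfies $\ker\rho\le\ker\orho$, I obtain the sandwich $\Gamma(n)\le\ker\rho\le\ker\orho$. Writing $\ell$ for the level of $\ker\rho$, the containment $\Gamma(n)\le\ker\rho$ gives $\ell\mid n$, while $\Gamma(\ell)\le\ker\rho\le\ker\orho$ forces $n\mid\ell$ because $\ker\orho$ has level $n$; hence $\ell=n$. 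Conversely, if $\orho$ admits a lift $\rho$ whose kernel has level $n$, then $\Gamma(n)\le\ker\rho$, so $\rho$ descends through $\pi_n$ to a linear representation $\rho_n$ with $\eta\circ\rho_n=\orho_n$; this exhibits a linear lift of $\orho_n$, so $\kappa$ is trivial.

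For (ii), suppose $\kappa\ne 0$. As a nonzero element of $H^2(\SL{n},\k^\times)$, it forces $4\mid n$ by Theorem \ref{t:Beyl}. Consider the reduction $\pi\colon\SL{2n}\to\SL{n}$; the second assertion of Theorem \ref{t:Beyl} says the inflation $\pi^*\kappa\in H^2(\SL{2n},\k^\times)$ is trivial, so $\orho_n\circ\pi$ admits a linear lift $\tilde\rho$. Using $\pi\circ\pi_{2n}=\pi_n$, the representation $\rho:=\tilde\rho\circ\pi_{2n}$ is a linear lift of $\orho$ with $\Gamma(2n)\le\ker\rho\le\ker\orho$. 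As before the level $\ell$ of $\ker\rho$ satisfies $n\mid\ell\mid 2n$, so $\ell\in\{n,2n\}$; but $\ell=n$ would mean $\rho$ factors through $\SL{n}$, yielding a linear lift of $\orho_n$ and hence $\kappa=0$, a contradiction. Therefore $\ell=2n$.

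The final ``in particular'' clause is then immediate: whichever case occurs, the lift $\rho$ produced satisfies $\Gamma(2n)\le\ker\rho$, directly in (ii) and via $\Gamma(2n)\le\Gamma(n)\le\ker\rho$ in (i). The step I expect to be most delicate is the exact determination of the level rather than the construction of a lift: the cohomological input only yields a lift whose kernel contains some $\Gamma(m)$, and it is the combination of the sandwich $\ker\rho\le\ker\orho$ with Wohlfahrt's characterization --- reinforced in (ii) by the contrapositive use of the nontriviality of $\kappa$ --- that turns ``divides'' into ``equals''.
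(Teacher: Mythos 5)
Your proof is correct and follows essentially the same route as the paper's: use triviality of the cohomology class (or, in case (ii), of its inflation to $\SL{2n}$ guaranteed by Theorem \ref{t:Beyl}) to lift $\orho_n$ linearly, compose with the reduction map, and then pin down the level of $\ker\rho$ by sandwiching it between a principal congruence subgroup and $\ker\orho$, with nontriviality of $\kappa$ ruling out level $n$ in case (ii) via part (i). The only cosmetic difference is that where you invoke the divisibility characterization of the level, the paper gets by in (i) with minimality of the level alone, and uses the same divisibility fact in (ii), where it is justified for these normal kernels by Lemma \ref{l:app1}.
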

\begin{proof}
  (i) If $\kappa$ is trivial, there exists a linear representation $\rho_n: \SL{n} \to \GLk{r}$ such that $\eta
  \circ \rho_n = \orho_n$. Then $\rho: = \rho_n \circ \pi_n$ is
  a lifting of $\orho$ since $\eta \circ \rho = \eta \circ \rho_n \circ \pi_n = \orho_n \circ \pi_n = \orho$.
    In particular, $\ker \rho$ is a congruence
  subgroup of level at most $n$. Obviously, $\ker \rho \le \ker \orho$. Since $\ker \orho$ is of level
  $n$, the level of $\ker\rho$ is at least $n$.   Therefore, $\ker \rho$ is of level $n$.

Conversely, assume  $\rho:\SL{} \to \GLk{r}$ is a representation whose kernel is a congruence subgroup
of
level $n$ and $\orho = \eta \circ\rho$. Then $\rho$ factors through $\SL{n}$ and so there exists a linear
representation $\rho_n: \SL{n} \to \GLk{r}$ such that $\rho=\rho_n \circ \pi_n$. Since
$$
\eta \circ \rho_n \circ \pi_n  = \eta \circ \rho = \orho= \orho_n \circ \pi_n,
$$
$\eta \circ \rho_n = \orho_n$. Therefore, $\rho_n$ is a lifting of $\orho_n$ and hence $\kappa$ is trivial.

(ii) Now, we consider the case when $\kappa$ is not trivial. By Theorem \ref{t:Beyl}, $4 \mid n$ and
$\pi^*(\kappa) \in H^2(\SL{2n}, \k^\times)$ is trivial where $\pi: \SL{2n} \to \SL{n}$ is the natural
surjection (reduction) map. The composition $\orho_n \circ \pi : \SL{2n} \to \PGLk{r}$ defines a
projective
representation of $\SL{2n}$, and its associated class in $H^2(\SL{2n}, \k^\times)$ is $\pi^*(\kappa)$.
Since $\pi^*(\kappa)$ is trivial, $\orho_n \circ \pi$ can be lifted to a linear representation $f: \SL{2n} \to
\GLk{r}$, i.e. $\eta \circ f = \orho_n \circ \pi$. Thus, we have
the following
commutative diagram:

 $$
\xymatrix{
\SL{2n} \ar@/^1pc/[rrd]^-{f} \ar@/_1.4pc/[rdd]_-{\pi} & & \\
&\SL{}\ar[rd]_-{\orho}\ar[d]_-{\pi_{n}} \ar[ul]^-{\pi_{2n}} &
\GLk{r}  \ar[d]_-{\eta} \\
& \SL{n} \ar[r]_-{\orho_n} & \PGLk{r}\,.
}
$$
The commutativity of the upper quadrangle is given by
$$
\eta \circ f \circ \pi_{2n} = \orho_n \circ \pi \circ \pi_{2n} = \orho_n \circ \pi_{n} = \orho\,.
$$
Set $\rho = f \circ \pi_{2n}$. Then $\eta \circ \rho = \orho$  and so $\G(2n) \le \ker
\rho$. Suppose $\G(m) \le \ker \rho$ for some positive integer $m < 2n$ and $m \mid 2n$. Then $\G(m)
\le
\ker \rho \le \ker \orho$. Since $\ker \orho$ is of level $n$, $n\mid m$. Thus, $m=n$, and hence $\ker
\rho$ is a congruence subgroup of level $n$. It follows from (i) that $\kappa$ is trivial, a
contradiction.
Therefore, $\ker \rho$ is of level $2n$.
\end{proof}

 Now we can prove the following lifting theorem for the projective representations of $\SL{}$ with congruence
 kernels.
 \begin{thm}\label{t:congruence lifting}
   Suppose $\orho: \SL{} \to \PGLk{r}$ is a projective representation for some positive integer $r$ such
   that $\ker \orho$ is a congruence subgroup of level $n$. Then the kernel of any lifting of $\orho$ is
   a
   congruence subgroup of level $m$ where $n \mid m \mid 12 n$.
 \end{thm}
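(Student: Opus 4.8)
The plan is to reduce everything to the existence of one good lifting together with the fact that any two liftings differ by a linear character of $\SL{}$, and then to control such characters on principal congruence subgroups.

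By Lemma~\ref{l:existence} there is a lifting $\rho_0$ of $\orho$ whose kernel is a congruence subgroup containing $\G(2n)$; in particular $\rho_0(\g)=\id$ for all $\g\in\G(2n)$. By the analysis in Section~\ref{ss:SL2}, every lifting $\rho$ of $\orho$ has the form $\rho=(\rho_0)_x$ for some $12$-th root of unity $x\in\k$ (see \eqref{eq:parametrize}). Since $(\rho_0)_x$ and the map $\g\mapsto\chi_x(\g)\rho_0(\g)$ are homomorphisms agreeing on the generators $\fs,\ft$, where $\chi_x\colon\fs\mapsto x^{-3},\ \ft\mapsto x$ is the associated linear character, they coincide; hence $\rho(\g)=\chi_x(\g)\,\rho_0(\g)$ for all $\g\in\SL{}$.

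For the upper bound I would prove $\G(12n)\le\ker\rho$. The one arithmetic input is that $\chi_x$ is trivial on $\G(12)$: being a character of order dividing $12$, it factors through the abelianization $\SL{}\to\SL{}^{\mathrm{ab}}\cong\BZ_{12}$, and this abelianization itself factors through the reduction $\SL{}\to\SL{12}$, since that reduction induces an isomorphism on abelianizations (equivalently, $\G(12)\le[\SL{},\SL{}]$). Hence $\chi_x$ vanishes on $\G(12)$ and a fortiori on $\G(12n)\le\G(12)$. Now for $\g\in\G(12n)\le\G(2n)$ we have both $\rho_0(\g)=\id$ and $\chi_x(\g)=1$, so $\rho(\g)=\chi_x(\g)\rho_0(\g)=\id$. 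Thus $\G(12n)\le\ker\rho$, so $\ker\rho$ is a congruence subgroup, and if $m$ denotes its level then $m\mid 12n$.

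For the lower bound, note $\ker\rho\le\ker\orho$ because $\orho=\eta\circ\rho$. As $\ker\orho$ has level $n$ and $\G(m)\le\ker\rho\le\ker\orho$, the defining minimality of the level forces $n\mid m$ (the same step used in the proof of Lemma~\ref{l:existence}). Combining the two bounds yields $n\mid m\mid 12n$, as desired. The only substantive obstacle is the triviality of linear characters of $\SL{}$ on $\G(12)$; once that classical fact is in hand, the remainder is a formal manipulation of kernels within the lattice of principal congruence subgroups.
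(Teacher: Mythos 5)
Your proof is correct and follows essentially the same route as the paper's: obtain one lifting with kernel containing $\G(2n)$ from Lemma \ref{l:existence}, write an arbitrary lifting as that lifting twisted by a character $\chi_x$, kill $\chi_x$ on $\G(12)$ via $\G(12)\le\SL{}'$, and conclude $\G(12n)\le\ker\rho$ together with the divisibility $n\mid m$ from $\ker\rho\le\ker\orho$. The only cosmetic difference is that you verify the identity $\rho=\chi_x\cdot\rho_0$ elementwise on generators, whereas the paper phrases the same containment as $\ker(\chi_x\o\xi)\supseteq\SL{}'\cap\G(2n)\supseteq\G(12n)$.
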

\begin{proof}
  By Lemma \ref{l:existence}, $\orho$ admits a lifting $\xi$ such that $\ker \xi$ is congruence subgroup
  containing $\G(2n)$. Let $\rho$ be a lifting of $\orho$. By Section \ref{ss:SL2}, $\rho= \xi_x
  \cong
  \chi_x \o \xi$  for some $12$-th root of unity $x \in \k$. Note that $\SL{}/\SL{}' \cong \BZ_{12}$ and
  $\G(12) \le \SL{}'$; see for example \cite[Lem. 1.13]{Beyl}. Therefore, $\G(12) \le  \ker \chi_x$ and hence
  $$
  \ker(\chi_x \o \xi) \supseteq \SL{}' \cap \G(2n) \supseteq \G(12) \cap \G(2n) \supseteq \G(12n).
  $$
  Therefore, $\rho$ has a congruence kernel containing $\G(12n)$ and so $m \mid 12n$. Since $\G(m) \le
  \ker
  \rho \le \ker \orho$ and $\ker \orho$ is of level $n$, $n \mid m$.
\end{proof}
A consequence of Theorem \ref{t:congruence lifting} is a proof for
the statements (i) and (ii) of Theorem II.
\begin{proof}[{\rm \bf Proof of Theorem II (i) and (ii)}] By \cite[Thm. 6.8]{NS4}, the projective modular
representation $\orho_\A$ of a modular category $\A$ over $\k$ has a congruence kernel of level $N$
where
$N$ is the order of the $T$-matrix of $\A$. It follows immediately from Theorem \ref{t:congruence
lifting}
that every modular representation $\rho$ has a congruence kernel of level $n$ where $N \mid n \mid 12N$.
By Lemma \ref{l:app1},  $\ord(\rho(\ft)) = n$. Now the statement Theorem II (ii) follows directly from
\cite[Thm. 7.1]{NS4}.
\end{proof}

The congruence property, Theorem II (i) and (ii), is essential to the proof of Theorem I and to the Galois symmetry of
modular categories in Sections \ref{s:Gal1} and \ref{s:Quantum_double}.

\begin{defn}{\rm Let $\A$ be a modular category over $\k$ with $\FSexp(\A)=N$.
\begin{enumerate}
  \item[(i)] By  virtue of Theorem II (i), a modular representation $\rho$ of $\A$ is said to be of
      level $n$ if $\ord(\rho(\ft))=n$.
  \item[(ii)] The projective modular representation $\orho_\A$ of $\A$ factors through a projective
      representation $\orho_{\A,N}$ of $\SL{N}$. We denote by $\kappa_\A$ the cohomology class in
      $H^2(\SL{N}, \k^\times)$ associated with $\orho_{\A,N}$.
\end{enumerate}
  }
\end{defn}

By Theorem \ref{t:Beyl}, the order of $\kappa_\A$ is at most 2. If  $4 \nmid \FSexp(\A)$, $\kappa_\A$ is
trivial. However, if $4 \mid \FSexp(\A)$, Lemma \ref{l:existence} provides the following criterion to
decide the order of $\kappa_\A$.
\begin{cor}\label{c1}
  Let $\A$ be a modular category over $\k$. Suppose $N=\FSexp(\A)$ and $\zeta \in \k$ is a 6-th root of
  the
  anomaly of $\A$. Then $\kappa_\A$ is trivial if, and only if, $(x/\zeta)^N=1$ for some 12-th root of
  unity $x \in \k$. In this case, $x^3 p_\A^+/\zeta^3 \in \BQ_N$. In particular, if $4 \nmid N$, then
  there
  exists a 12-th root of unity $x \in \k$ such that
  $$
  (x/\zeta)^N=1 \quad\text{and}\quad x^3 p_\A^+/\zeta^3 \in \BQ_N\,.
  $$
\end{cor}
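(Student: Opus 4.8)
The plan is to read off the triviality of $\kappa_\A$ from the order of the $T$-matrix of an explicit lifting, via the criterion in Lemma \ref{l:existence}(i). Since $\ker\orho_\A$ is a congruence subgroup of level $N$, that lemma says $\kappa_\A$ is trivial exactly when $\orho_\A$ admits a lifting whose kernel is a congruence subgroup of level \emph{precisely} $N$. So the task reduces to deciding when some lifting has level $N$.

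First I would make all liftings explicit. By \eqref{eq:repC} the assignment $\rho^\zeta$ with $\rho^\zeta(\ft)=\frac1\zeta\tt$ is a modular representation of $\A$, and by the discussion in Section \ref{ss:SL2} every lifting of $\orho_\A$ has the form $\rho^\zeta_x$ for some $12$-th root of unity $x\in\k$, with $\rho^\zeta_x(\ft)=\frac{x}{\zeta}\tt$ as in \eqref{eq:parametrize}. Writing $y=x/\zeta$ and recalling that $\tt$ is diagonal with entries $\w_i$, that $\w_0=1$, and that $\ord(\tt)=N$, a one-line computation gives $\ord(\rho^\zeta_x(\ft))=\lcm(\ord(y),N)$: the identity $(y\tt)^k=\id$ forces $y^k=1$ (look at the entry indexed by $0$) and hence $\tt^k=\id$, i.e. $N\mid k$. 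By Theorem II(i) together with Lemma \ref{l:app1}, the level of $\ker\rho^\zeta_x$ equals $\ord(\rho^\zeta_x(\ft))$; thus this level is $N$ if and only if $\ord(y)\mid N$, that is, $(x/\zeta)^N=1$. Combining with Lemma \ref{l:existence}(i) proves the equivalence in (i).

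For the rationality assertion, suppose $\kappa_\A$ is trivial and fix a $12$-th root of unity $x$ with $(x/\zeta)^N=1$. Then $\rho^\zeta_x$ is a lifting of level $N$, so by \cite[Thm. 7.1]{NS4} its image lies in $\GLR{\Pi}{\BQ_N}$. In particular the $(0,0)$-entry of $\rho^\zeta_x(\fs)=\frac{\zeta^3}{x^3 p_\A^+}\ts$ belongs to $\BQ_N$; since $\ts_{00}=d(\1)=1$, that entry equals $\zeta^3/(x^3 p_\A^+)$, and inverting a nonzero element of the field $\BQ_N$ gives $x^3 p_\A^+/\zeta^3\in\BQ_N$, which is (ii). Finally, if $4\nmid N$ then $H^2(\SL{N},\k^\times)$ is trivial by Theorem \ref{t:Beyl}, so $\kappa_\A$ is automatically trivial and the ``in particular'' clause follows by applying (i) and (ii).

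The computations here are all routine; the one point needing care is that every lifting is $\rho^\zeta_x$ for the \emph{single} fixed root $\zeta$, so that the level condition can be phrased uniformly as a condition on $y=x/\zeta$. This is precisely the fact from Section \ref{ss:SL2} that a lifting is unique up to a linear character of $\SL{}$, i.e. up to the twist \eqref{eq:parametrize}. I expect no genuine obstacle beyond correctly invoking the already-established congruence property (Theorem II(i)) to convert ``level $N$'' into the transparent order condition $(x/\zeta)^N=1$.
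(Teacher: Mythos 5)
Your proof is correct and follows essentially the same route as the paper's: Lemma \ref{l:existence}(i) plus the parametrization of liftings as $\rho^\zeta_x$, Theorem II(i) to convert ``level $N$'' into $(x/\zeta)^N=1$, Theorem II(ii) (i.e.\ \cite[Thm. 7.1]{NS4}) for the rationality, and Theorem \ref{t:Beyl} for the case $4\nmid N$. The only difference is that you spell out two details the paper leaves implicit — the order computation via the $(0,0)$-entry of $\frac{x}{\zeta}\tt$ and the use of $\ts_{00}=1$ to pass from matrix rationality to $x^3p_\A^+/\zeta^3\in\BQ_N$ — which is fine.
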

\begin{proof}
  By \eqref{eq:repC}, $\zeta$ determines the modular representation $\rho^\zeta$ of $\A$ given by
  $$
  \rho^\zeta: \fs \mapsto \frac{\zeta^3}{ p_\A^+} \ts, \quad \ft \mapsto \frac{1}{\zeta} \tt\,.
  $$
  By Lemma \ref{l:existence} (i) and the last two paragraphs of Section \ref{ss:SL2}, $\kappa_\A$ is trivial if,
  and only if, there exists a 12-th root of unity $x \in \k$ such that $\rho^\zeta_x$ is a level $N$
  modular representation of $\A$. By Theorem II (i), this is equivalent to $\id =  (\frac{x}{\zeta} \tt)^N$
  or
  $(\frac{x}{\zeta})^N = 1$. In this case, Theorem II (ii) implies $\frac{\zeta^3}{x^3 p_\A^+} \ts \in
  GL_{\Pi_\A}(\BQ_N)$ and
  hence $\frac{\zeta^3}{x^3 p_\A^+}  \in \BQ_N$. The last statement follows immediately from Theorem
  \ref{t:Beyl}.
\end{proof}
The corollary implies some arithmetic relations among the Frobenius-Schur exponent,  the global
dimension
and the anomaly of a modular category. These arithmetic consequences will be discussed in Section
\ref{s:anomaly}.

\section{Modularity of trace functions for rational vertex operator algebras}\label{s:voa}
In this section we prove that the trace functions of a rational, $C_2$-cofinite vertex operator algebra $V$
are modular forms on some congruence subgroup by showing that the representation $\rho_V$ of $\SL{}$, defined by
modular transformation of the trace functions of $V$, is a modular representation of $\CC_V$. The congruence
subgroup property obtained in Section  \ref{s:tI i and ii} is then applied to $\rho_V$ to conclude the
modularity of the trace functions of $V$.

\subsection{Preliminaries}\label{ss:3.1}
In this subsection we briefly review some basics of vertex operator algebras following \cite{FLM}, \cite{FHL},
\cite{DLM1}, \cite{DLM2}, \cite{LL} and \cite{Z}.

Let $V=(V,Y,\bm1,\omega)$ be a vertex operator algebra. Then $V$ is $C_2$-cofinite if the subspace
$C_{2}(V)$ of $V$ spanned by all elements of type $a_{-2}b$ for $a,b $ in $V$ has finite codimension in $V$.
Recall from \cite{DLM2} that $V$ is rational if any admissible module is completely reducible. The component operator $L(n)$ of $Y(\omega,z)=\sum_{n\in\Z}L(n)z^{-n-2}$ will be used frequently.
It is proved in \cite{DLM2} that if $V$ is rational then $V$ has only finitely many irreducible admissible
modules
$M^0,...,M^p$ up to isomorphism and there exist $\lambda_i \in \mathbb{C}$ for $i=0,...,p$ such that
$$M^i=\oplus_{n=0}^{\infty}M^i_{\lambda_i +n}$$
 where $M^i_{\lambda_i}\neq 0$ and $L(0)|_{M^i_{\lambda_i+n}}=\lambda_i+n$ for any $n\in\Z.$ Moreover, if $V$
 is also assumed to be $C_2$-cofinite, then $\lambda_i$ and the central charge $c$ of $V$ are rational numbers
 (see \cite{DLM4}). In this paper we always assume that $V$ is simple and we take $M^0$ to be $V.$

Another important concept is the contragredient module. Let $M = \bigoplus_{\lambda\in
\mathbb{C}}{M_{\lambda}}$ be a $V$-module. Let $M'
= \bigoplus_{\lambda \in \mathbb{C}}{M_\lambda^*}$ be the restricted
dual of $M$. It is proved in [FHL] that $M'=(M', Y')$ is naturally a
$V$-module such that $$\langle Y'(a, z)u', v\rangle  = \langle u', Y(e^{zL(1)}(-z^{-2})^{L(0)}a,
z^{-1})v\rangle ,$$\\for $a\in V, u'\in
M'$ and $v\in M,$ and that $(M')'\simeq M$. Moreover, if $M$ is irreducible, so
is $M'.$ A $V$-module $M$ is said to be self dual if $M$ and $M'$
are isomorphic. In this paper, we'll always assume that the vertex operator algebra $V$ satisfies
the following assumptions:
\begin{enumerate}
\item[(V1)] $V=\bigoplus\limits_{n\geq 0}V_n$ with $\dim V_0=1$ is simple and self dual,\\
\item[(V2)] $V$ is $C_{2}$-cofinite and rational.
\end{enumerate}

The assumption (V2) is equivalent to the regularity \cite{DLM1}. That is, any weak module is completely
reducible.

We now recall the  notion of intertwining operators and fusion
rules from [FHL].
Let $W^i=(W^i,Y_{W^i})$ for $i=1,2,3$ be weak $V$-modules.
An intertwining
operator $\mathcal {Y}( \cdot , z)$ of type $\Itw{W^3}{W^1}{W^2}$ is a linear map
$$
\mathcal
{Y}(\cdot, z): W^1\rightarrow \Hom(W^2, W^3)\{z\}, \, v^1\mapsto
\mathcal {Y}(v^1, z) = \sum_{n\in \mathbb{C}}{v_n^1z^{-n-1}}
$$
satisfying the following conditions:
\begin{enumerate}
\item[(i)] For any $v^1\in W^1, v^2\in W^2$ and $\lambda \in \mathbb{C},
v_{n+\lambda}^1v^2 = 0$ for $n\in \mathbb{Z}$ sufficiently large.
\item[(ii)] For any $a \in V, v^1\in W^1$,
$$z_0^{-1}\delta(\frac{z_1-z_2}{z_0})Y_{W^3}(a, z_1)\mathcal
{Y}(v^1, z_2)-z_0^{-1}\delta(\frac{z_1-z_2}{-z_0})\mathcal{Y}(v^1,
z_2)Y_{W^2}(a, z_1)$$
$$=z_2^{-1}\delta(\frac{z_1-z_0}{z_2})\mathcal{Y}(Y_{W^1}(a, z_0)v^1, z_2).$$
\item[(iii)] For $v^1\in W^1$, $\dfrac{d}{dz}\mathcal{Y}(v^1,
z)=\mathcal{Y}(L(-1)v^1, z)$.
\end{enumerate}
The sum in the definition of intertwining operator defined in \cite{FHL} is over rational numbers. For a
rational vertex operator algebra, this is true. In general , the sum should be over complex numbers.
 All of the intertwining operators of type $\Itw{W^3}{W^1}{W^2}$ form a vector space denoted by
 $I_V\Itw{W^3}{W^1}{W^2}$. The dimension of $I_V\Itw{W^3}{W^1}{W^2}$ is called the
fusion rule of type $\Itw{W^3}{W^1}{W^2}$ for $V$, which is denoted by $N^{W^3}_{W^1, W^2}$.

The following properties of the fusion rule are well known (cf. \cite{FHL}).
\begin{prop}\label{2.1}
Let $V$ be a vertex operator algebra, and $M^i, M^j, M^k$ be three irreducible $V$-modules. Then we have:
\begin{enumerate}
\item[(i)] $N_{j, k}^{i}=N_{j, i^*}^{k^*}$, where we use $W^{i^*}$ to denote $(W^i)'$ and
    $N_{j,k}^i=N_{M^j,M^k}^{M^i};$
\item[(ii)] $N_{j,k}^{i}=N_{k,j}^{i}$.
\end{enumerate}
\end{prop}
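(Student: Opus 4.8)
The plan is to read both identities as equalities of dimensions of spaces of intertwining operators and to prove each one by exhibiting an explicit linear isomorphism between the two relevant spaces, following the constructions of Frenkel--Huang--Lepowsky \cite{FHL}. Since $N_{j,k}^i=\dim I_V\Itw{M^i}{M^j}{M^k}$ by definition, in each case it suffices to produce a bijective linear map between two such spaces. The two maps I would use are the skew-symmetry of intertwining operators (for (ii)) and the contragredient/adjoint operation (for (i)), the latter built in exact parallel with the contragredient module formula recalled above.

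For part (ii) I would use skew-symmetry. Given $\mathcal{Y}\in I_V\Itw{M^i}{M^j}{M^k}$, define $\Omega(\mathcal{Y})$ by
$$\Omega(\mathcal{Y})(w, z)\, u = e^{zL(-1)}\, \mathcal{Y}(u, -z)\, w$$
for $u\in M^j$ and $w\in M^k$. First I would check that $\Omega(\mathcal{Y})$ satisfies the three axioms of an intertwining operator of type $\Itw{M^i}{M^k}{M^j}$; the substantive point is the Jacobi-type identity, which follows from the corresponding identity for $\mathcal{Y}$ after conjugating by $e^{zL(-1)}$ and using the $L(-1)$-derivative property together with standard delta-function manipulations. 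A direct computation gives $\Omega^2=\id$, so $\Omega$ is a linear involution, hence a linear isomorphism $I_V\Itw{M^i}{M^j}{M^k}\cong I_V\Itw{M^i}{M^k}{M^j}$. Comparing dimensions yields $N_{j,k}^i=N_{k,j}^i$.

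For part (i) I would use the adjoint operation. For $\mathcal{Y}\in I_V\Itw{M^i}{M^j}{M^k}$, define $A(\mathcal{Y})$ of type $\Itw{(M^k)'}{M^j}{(M^i)'}$ by
$$\langle A(\mathcal{Y})(u, z)\, w',\, v\rangle = \langle w',\, \mathcal{Y}(e^{zL(1)}(-z^{-2})^{L(0)} u,\, z^{-1})\, v\rangle$$
for $u\in M^j$, $w'\in (M^i)'$ and $v\in M^k$ (fixing a branch of the fractional powers; different choices differ only by an invertible scalar operator, so the induced map on spaces is unaffected). The work is again to verify the intertwining axioms for $A(\mathcal{Y})$, which reduces to the behaviour of $Y$ under the conjugation $a\mapsto e^{zL(1)}(-z^{-2})^{L(0)}a$ that already underlies the definition of the contragredient module. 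Since $(M^i)'=M^{i^*}$, $(M^k)'=M^{k^*}$ and $(M')'\cong M$ canonically, applying $A$ a second time returns $\mathcal{Y}$ up to this identification, so $A$ is bijective. This gives $I_V\Itw{M^i}{M^j}{M^k}\cong I_V\Itw{M^{k^*}}{M^j}{M^{i^*}}$ and hence $N_{j,k}^i=N_{j,i^*}^{k^*}$.

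The main obstacle in both parts is the same: confirming that the formally defined operators $\Omega(\mathcal{Y})$ and $A(\mathcal{Y})$ genuinely land in the asserted spaces of intertwining operators, i.e.\ checking the Jacobi/commutator identity, which is where the real formal-variable calculus lies. Once that is in place, the bijectivity is immediate --- from $\Omega^2=\id$ in one case and from the double-adjoint/double-dual identity in the other --- and the passage to dimensions is routine. Accordingly, I would cite \cite{FHL} for the verification of the axioms and keep the argument focused on assembling the two isomorphisms and reading off the stated symmetries.
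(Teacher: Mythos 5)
Your proposal is correct and is essentially the paper's own approach: the paper gives no argument for Proposition \ref{2.1} at all, presenting it as ``well known (cf.\ \cite{FHL})'', and the two isomorphisms you build --- the skew-symmetry map $\Omega$ and the adjoint map $A$ --- are precisely the constructions in \cite{FHL} (Sections 5.4--5.5) that this citation refers to. The one subtlety worth flagging is that, since intertwining operators involve non-integral powers of $z$, the substitution $z \mapsto -z$ in $\Omega$ requires a branch choice just as in $A$ (FHL's $e^{(2r+1)\pi i}$ convention), so $\Omega$ is inverted by the opposite-branch map rather than being literally an involution; this does not affect bijectivity, hence the equality of dimensions stands.
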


Let $M^1$ and $M^2$ be two $V$-modules. A tensor product for the
ordered pair $(M^1, M^2)$ is a pair $(M, F(\cdot, z))$, which consists of
a $V$-module $M$ and an intertwining operator $F(\cdot, z)$ of type
$\Itw{M}{M^1}{M^2}$, such that the following universal property
holds: For any $V$-module $X$ and any intertwining operator $I(\cdot,
z)$ of type $\Itw{X}{M^1}{M^2}$, there exists a unique $V$-homomorphism $\phi$
from $M$ to $X$ such that $I(\cdot, z)=\phi\circ F(\cdot, z)$. Note that if
there is a tensor product, then it is unique by the universal mapping property.
In this case we will denote it by $M^1\boxtimes M^2$

In a series of papers \cite{HL1}, \cite{HL2}, \cite{HL3}, \cite{H1}, \cite{H2}, \cite{H3},  the tensor product
$\boxtimes$ of the modules for a vertex operator algebra $V$ has been defined and studied extensively.  We have
(cf. \cite[Cor. 10]{ABD} and \cite[Prop. 4.13]{HL1}) the following result.
\begin{thm}\label{2.2}
Let $V$ be a rational and $C_2$-cofinite vertex operator algebra, and $M^i, M^j, M^k$ be any three irreducible
modules of $V$. Then:
\begin{enumerate}
\item[(i)] The fusion rules $N_{i,j}^k$ are finite.
\item[(ii)] The tensor product $M^i\boxtimes M^j$ of $M^i$ and $M^j$ exists and
is equal to $\sum_{k}N_{i,j}^kM^k.$
\end{enumerate}
\end{thm}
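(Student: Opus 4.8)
The plan is to prove the two parts by distinct mechanisms: part (i) by reducing the fusion rule to the dimension of a Hom-space over Zhu's algebra, which $C_2$-cofiniteness forces to be finite; and part (ii) by invoking the Huang--Lepowsky construction of $M^i\boxtimes M^j$ and then extracting the decomposition formally from the universal property and rationality.

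For (i), I would realize the fusion rule as a dimension of morphisms of modules over the associative algebra $A(V)$ (Zhu's algebra). Recall that to each irreducible module $M$ one attaches the finite-dimensional $A(V)$-module $M(0)$ supported on its top space---here $M^i(0)=M^i_{\lambda_i}$---and, following the Frenkel--Zhu bimodule theory (in the sharpened form due to H.~Li), an $A(V)$-bimodule $A(M^i)$. The central step is that restricting an intertwining operator of type $\Itw{M^k}{M^i}{M^j}$ to the leading coefficients of its action on top spaces yields an $A(V)$-module homomorphism, producing a linear injection
\[
I_V\Itw{M^k}{M^i}{M^j} \hookrightarrow \Hom_{A(V)}\!\bigl(A(M^i)\otimes_{A(V)} M^j(0),\, M^k(0)\bigr).
\]
Injectivity is where property (iii) of intertwining operators and the commutator/iterate identities enter, letting one reconstruct the whole operator from its top-space action. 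Since $V$ is $C_2$-cofinite, $A(V)$ is finite-dimensional, each top space $M^\ell(0)$ is finite-dimensional, and $A(M^i)$ is finitely generated, so the right-hand side is finite-dimensional. Hence $N_{i,j}^k<\infty$.

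For (ii), I would take the tensor product $(M^i\boxtimes M^j, F)$ furnished by the Huang--Lepowsky theory \cite{HL1,HL2,HL3}, whose existence in the present setting is guaranteed by the convergence and associativity results valid under $C_2$-cofiniteness \cite{ABD}. Granting existence, the decomposition is purely formal. The universal property says precisely that composing with $F$ sends a $V$-homomorphism $\phi$ to the intertwining operator $\phi\circ F$, and that this assignment is a bijection; taking the target to be $M^k$ gives an isomorphism
\[
\Hom_V\bigl(M^i\boxtimes M^j,\, M^k\bigr)\;\xrightarrow{\ \sim\ }\;I_V\Itw{M^k}{M^i}{M^j}.
\]
Because $V$ is rational, $M^i\boxtimes M^j$ is completely reducible, so by Schur's lemma the multiplicity of each irreducible $M^k$ equals $\dim\Hom_V(M^i\boxtimes M^j,M^k)$; combined with the displayed isomorphism and part (i), this multiplicity is $N_{i,j}^k$. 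As there are only finitely many isomorphism classes of irreducibles, the sum is finite and $M^i\boxtimes M^j=\sum_k N_{i,j}^k M^k$.

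The hard part will be the analytic input behind (ii): that the product of two intertwining operators converges and that the tensor product module actually exists and again lies in the module category of $V$. This is exactly where $C_2$-cofiniteness is indispensable and cannot be replaced by formal algebra, and it is the content I would draw directly from \cite{ABD,HL1,HL2,HL3} rather than reprove. By contrast, the finiteness in (i) and the multiplicity computation in (ii) are routine once this existence and convergence are in hand. I would also note that (i) can alternatively be deduced a posteriori from (ii), since rationality makes $M^i\boxtimes M^j$ of finite length; I prefer the $A(V)$-route for (i) because it is elementary and logically prior to the decomposition.
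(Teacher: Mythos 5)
The paper gives no proof of Theorem \ref{2.2} at all: it is quoted from the literature with the citation ``(cf. \cite{ABD} and \cite{HL1}, \cite{HL2}, \cite{HL3})'', so there is no in-paper argument to compare yours against. Your reconstruction --- the Frenkel--Zhu/Li $A(V)$-bimodule injection $I_V\Itw{M^k}{M^i}{M^j} \hookrightarrow \Hom_{A(V)}\bigl(A(M^i)\otimes_{A(V)} M^j(0), M^k(0)\bigr)$ with $C_2$-cofiniteness supplying finite-dimensionality for part (i), and the Huang--Lepowsky existence theorem combined with the universal property, rationality, and Schur's lemma for part (ii) --- is exactly how the cited references establish the result, so your proposal is correct and follows the same (implicitly cited) route as the paper.
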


We finally review some facts about the modular transformation of trace functions
 of irreducible modules of a vertex operator algebra from \cite{Z}. Let $V$ be a rational and $C_2$-cofinite
 vertex operator algebra, and $M^0,...,M^p$ be the irreducible $V$-modules as before. There is another
 VOA structure $(V,Y[\cdot,z],\bm1,\omega-c/24)$ on $V$ introduced in \cite{Z}. In particular,
 $$V=\oplus_{n\geq 0}V_{[n]}.$$
 We will write $wt [v]=n$ if $v\in  V_{[n]}.$ For each $v\in V_n$, we denote $v_{n-1}$ by $o(v)$ and
 extend to $V$ linearly. Recall that $M^i=\oplus_{n=0}^{\infty}M^i_{\lambda_i +n}.$ For $v\in V$ we set
 $$Z_i(v,q)=\tr_{M^i}o(v)q^{L(0)-c/24}=\sum_{n\geq 0}(\tr_{M^i_{\lambda_i+n}}o(v))q^{\lambda_i+n-c/24},$$
 which is a formal power series in variable $q.$ The constant $c$ here is the central charge of $V.$
 The $Z_i(\bm1,q)$ sometimes is called the $q$-character of $M^i.$ Then $Z_i(v, q)$ converges to a holomorphic
 function
 in $0<|q|<1$ \cite{Z}.
 As usual let $\h =\{\tau\in \BC \mid im\,\tau>0\}$ and
 $q=e^{2\pi i\tau}$ with $\tau\in \h.$ We also denote the holomorphic function $Z_i(v,q)$ by $Z_i(v,\tau)$ when
 we discuss modular transformations of these functions.

 The full modular group $SL_2{(\mathbb{Z})}$ acts on $\h$ by:$$\gamma: \tau\mapsto \frac{a\tau+b}{c\tau+d},\,
 \gamma=\mtx{a & b\\ c& d}
 \in SL_2(\mathbb{Z}).$$
The following theorem was established  in \cite{Z}.
\begin{thm}\label{2.3}
Let $V$ be a rational and $C_2$-cofinite vertex operator algebra, and let $M^0,...,M^p$ be the irreducible
$V$-modules. Then for any $\gamma\in SL_2(\Z)$ there exists $\rho_V(\gamma)=[\gamma_{ij}]_{i,j=0,...,p}\in
GL_{p+1}(\BC)$ such that for any $0\leq i\leq p$ and $v\in V_{[n]}$
$$Z_i(v,\gamma\tau)=({c\tau+d})^n\sum_{j=0}^p\gamma_{ij}Z_j(v,\tau).$$
 \end{thm}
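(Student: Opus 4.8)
The plan is to verify the transformation law on the standard generators $\fs$ and $\ft$ of $SL_2(\BZ)$ and to use $C_2$-cofiniteness to bound the space in which all the trace functions live. The $\ft$-transformation is immediate from the $q$-expansion: since $Z_i(v,\tau)=\sum_{n\ge 0}(\tr_{M^i_{\lambda_i+n}}o(v))q^{\lambda_i+n-c/24}$ with $q=e^{2\pi i\tau}$, one reads off $Z_i(v,\tau+1)=e^{2\pi i(\lambda_i-c/24)}Z_i(v,\tau)$, and because $c\tau+d=1$ for $\ft$ the automorphy factor is trivial. Thus $\rho_V(\ft)$ is forced to be the diagonal matrix with entries $e^{2\pi i(\lambda_i-c/24)}$.

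The substance is the $\fs$-transformation $\tau\mapsto-1/\tau$, where the automorphy factor is $\tau^{\,n}$. Here I would first pass to the second vertex operator algebra structure $(V,Y[\cdot,z],\bm1,\omega-c/24)$, since the square-bracket grading $\wt[v]$ is exactly the one appearing in the theorem. The core is to establish Zhu's genus-one recursion formulas, which express $n$-point trace functions in terms of lower ones with coefficients built from the normalized Eisenstein series $E_{2k}(\tau)$. These recursions are modular covariant, precisely because the $E_{2k}$ are (quasi-)modular forms. Combined with $C_2$-cofiniteness, which forces $V/C_2(V)$ to be finite-dimensional, the recursions show that for each module the functions $Z_i(v,\tau)$ satisfy a linear ordinary differential equation in $q\frac{d}{dq}$ whose coefficients are holomorphic modular forms on $SL_2(\BZ)$; the solution space of such an equation is finite-dimensional and stable under $SL_2(\BZ)$.

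To obtain a single matrix $\rho_V(\gamma)$ valid for all $v$ at once, I would follow Zhu and introduce the finite-dimensional space of abstract one-point functions on the torus: linear maps $S:V\to\mathcal O(\h)$ obeying the recursion identities together with the $L[-1]$-derivative and $L[0]$-grading conditions. $C_2$-cofiniteness bounds the dimension of this space, and each irreducible module $M^i$ yields such a one-point function via $v\mapsto Z_i(v,\tau)$. The rule $(S\cdot\gamma)(v,\tau)=(c\tau+d)^{-\wt[v]}S(v,\gamma\tau)$ defines an action of $SL_2(\BZ)$ on this space; modular covariance of the recursion is exactly what makes the action well defined, and the weight $\wt[v]=n$ enters through the homogeneity of the recursion, producing the factor $(c\tau+d)^{n}$.

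Finally I would invoke rationality to identify the image. Since $V$ is rational, every module is a finite direct sum of the $M^i$, and the transformed functions $(c\tau+d)^{-\wt[v]}Z_i(v,\gamma\tau)$ are again one-point functions on a module; hence they lie in the span of $\{Z_j(v,\tau)\}_j$, and the coefficients $\gamma_{ij}$ are independent of $v$ because they record a linear map on the one-point-function space rather than on $V$. The main obstacle is the $\fs$-transformation, and within it the two technical pillars: deriving the genus-one recursion formulas with Eisenstein-series coefficients, and extracting from $C_2$-cofiniteness that the trace functions satisfy genuine modular differential equations with finite-dimensional, $SL_2(\BZ)$-stable solution spaces. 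This is the analytic and combinatorial heart of Zhu's theorem.
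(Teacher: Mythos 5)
The paper gives no proof of this theorem at all: it is quoted directly from Zhu's paper \cite{Z} ("The following theorem was established in [Z]"), and that source is exactly what your outline reconstructs. Your sketch — the trivial $\ft$-transformation read off the $q$-expansion, the square-bracket VOA structure, the genus-one recursion formulas with Eisenstein-series coefficients, $C_2$-cofiniteness yielding modular differential equations with finite-dimensional $SL_2(\BZ)$-stable solution spaces, the abstract space of torus one-point functions carrying the $SL_2(\BZ)$-action, and rationality identifying that space with the span of the $Z_j$ so that the matrix $[\gamma_{ij}]$ is independent of $v$ — is precisely the strategy of Zhu's original proof, i.e.\ the same approach the paper relies on by citation.
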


Theorem \ref{2.3}, in fact, gives a group homomorphism $\rho_V: SL_2(\Z)\to GL_{p+1}(\BC).$ We call
$\rho_V(\gamma)$ the genus 1 modular matrices. In particular,
$$
S=\rho_V\left(\mtx{0 & -1\\ 1 & 0}\right) \text{ and }T=\rho_V\left(\mtx{1 & 1\\ 0 & 1}\right).
$$
are respectively called the \emph{genus one} $S$ and $T$-matrices of $V$. It is immediate to see that
$T_{jk}=\delta_{jk} e^{2 \pi i (\l_j -c/24)}$.

One of our main goals in this paper is to show that the kernel of $\rho_V$ is a congruence subgroup.

We also need the following results on the Verlinde formula \cite{Ver88} from \cite{H2} and \cite{H3} (also see
\cite{MS90}).
\begin{thm}\label{2.4}
Let $V$ be a vertex operator algebra satisfying (V1) and (V2). Then the genus one $S$-matrix of
$V$ defined above has the following properties:
\begin{enumerate}
 \item[(i)] $S$ is symmetric and $S^2=C$, where $C_{ij}=\delta_{ij^*}.$ In particular, $C$ has order at most 2 and is
     also symmetric.
\item[(ii)] $S^{-1}_{ij}=S_{i^*j}=S_{ij^*}.$
\item[(iii)] (Verlinde formula) For any $i,j,k\in\{0,...,p\}$
$$N_{i,j}^k =\sum_{q=0}^p\frac{S_{iq}S_{jq}S_{k^*q}}{S_{0q}}.$$
\end{enumerate}
\end{thm}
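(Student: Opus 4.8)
The deep content of this theorem is due to Huang \cite{H2, H3}, and I would organize its proof as a transfer of the general Verlinde formula for modular tensor categories to the genus-one modular data of $V$. The organizing principle is that the matrices $S$ and $T$ of Theorem \ref{2.3}, produced by the modular transformations of the trace functions $Z_i(v,\tau)$, agree with the categorical $S$- and $T$-matrices of the modular tensor category $\CC_V$ after a suitable normalization. Concretely, $T_{jk} = \delta_{jk} e^{2\pi i(\lambda_j - c/24)}$ records the ribbon twists $\w_j$ up to the global scalar $e^{-2\pi i c/24}$, and the genus-one $S$ equals $\ts/\sqrt{\dim \CC_V}$, where $\ts_{ij} = \ptr(c_{V_j, V_{i^*}}\circ c_{V_{i^*}, V_j})$ is the categorical $S$-matrix. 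Granting this identification, which is precisely the modular-invariance statement forward-referenced as Theorem \ref{iden}, the three listed properties become assertions about the normalized categorical $S$-matrix.

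For (i), the symmetry $S_{ij} = S_{ji}$ is the first identity of \eqref{eq:S} and is preserved under scaling. The relation $S^2 = C$ comes from \eqref{eq:STrelations} and \eqref{eq:gausssum}: since $\ts^2 = p_{\CC_V}^+ p_{\CC_V}^- C = (\dim \CC_V)\, C$, dividing by $\dim \CC_V$ gives $S^2 = C$ irrespective of the anomaly. That $C^2 = \id$ with $C$ symmetric is immediate from $C_{ij} = \delta_{ij^*}$ and the fact that $i \mapsto i^*$ is an involution. Property (ii) is then formal: from $S^2 = C = C^{-1}$ we get $S^{-1} = SC$, whence $S^{-1}_{ij} = \sum_k S_{ik}\delta_{kj^*} = S_{ij^*}$, and $S_{ij^*} = S_{i^*j}$ is the second relation of \eqref{eq:S}.

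For (iii), the key categorical fact is that in a modular tensor category the $S$-matrix simultaneously diagonalizes the fusion matrices $N_i = [N_{ij}^k]_{j,k}$. I would derive this from the identity
$$
\frac{S_{i\ell}}{S_{0\ell}}\cdot \frac{S_{j\ell}}{S_{0\ell}} = \sum_{k} N_{ij}^k\, \frac{S_{k\ell}}{S_{0\ell}},
$$
valid for each index $\ell$, which expresses that $\ell \mapsto S_{i\ell}/S_{0\ell}$ is a one-dimensional character of the fusion algebra and which follows from the graphical calculus for the double braiding defining $\ts$. Rewritten, this says $N_i = S\,\Lambda_i\, S^{-1}$ with $\Lambda_i$ diagonal of entries $S_{i\ell}/S_{0\ell}$; here the non-degeneracy of $S$ (the modularity of $\CC_V$) is exactly what makes $S$ invertible. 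Substituting $S^{-1}_{\ell k} = S_{\ell k^*} = S_{k^*\ell}$ from (ii) gives
$$
N_{ij}^k = \sum_\ell S_{j\ell}\,\frac{S_{i\ell}}{S_{0\ell}}\, S_{k^*\ell} = \sum_{q} \frac{S_{iq} S_{jq} S_{k^*q}}{S_{0q}},
$$
which is the stated formula.

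I expect the genuine obstacle to lie not in the formal manipulations above but in the identification of the first paragraph: proving that Zhu's genus-one matrices $S$ and $T$ coincide with the categorical modular data requires Huang's theorems on the associativity and commutativity of intertwining operators and the modular invariance of genus-one correlation functions. I would treat this analytic input as the cited black box \cite{H2, H3} (see also \cite{MS90}) and build the elementary category-theoretic consequences (i)--(iii) on top of it.
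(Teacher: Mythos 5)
The paper offers no proof of Theorem \ref{2.4} at all: it is imported verbatim from Huang's work \cite{H2, H3} (see also \cite{MS90}), where the Verlinde formula is established by analytic arguments about intertwining operators and the modular invariance of genus-zero and genus-one correlation functions. So the question is whether your proposed derivation is a legitimate alternative, and it is not: it is circular relative to the logical structure of both this paper and Huang's papers.

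The identification you take as your starting point --- that the genus-one $S$-matrix equals $\ts/\sqrt{\dim \CC_V}$, i.e. Theorem \ref{iden}(i) --- is proved in the paper \emph{using} Theorem \ref{2.4}. What Huang's work directly supplies (as quoted in the proof of Theorem \ref{iden}) is only that the pivotal trace of the double braiding equals $S_{ij}/S_{00}$, which gives $S=\lambda s$ for an undetermined scalar $\lambda$. Pinning down $\lambda=1$ requires the unitarity of $S$ (Proposition \ref{3.5}) together with the positivity of $S_{00}$ (Lemma \ref{positive}); and the proof of Proposition \ref{3.5} rests on the identity $N(i)\,\mathbf{S}_j=\frac{S_{ij}}{S_{0j}}\mathbf{S}_j$, i.e. that the columns of the genus-one $S$-matrix are eigenvectors of the fusion matrices --- which is precisely the content of Theorem \ref{2.4}(ii) and (iii). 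The same dependence occurs in Huang's original development: the Verlinde formula \cite{H2} is an input to the proof of rigidity and modularity of $\CC_V$ \cite{H3}, not a consequence of it. Your second and third paragraphs are correct as formal manipulations in an abstract modular category (deriving $S^2=C$ from \eqref{eq:STrelations} and \eqref{eq:gausssum}, $S^{-1}=SC$, and the Verlinde formula from the character property and diagonalization), but the object in Theorem \ref{2.4} is the \emph{analytically} defined matrix of Theorem \ref{2.3}, and transporting categorical facts to it with the exact normalization is exactly what cannot be done before the Verlinde formula is known. The correct treatment here is the paper's: cite Theorem \ref{2.4} itself as Huang's black box, rather than attempting to rederive it from the categorical side.
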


\subsection{Unitarity of $S$}\label{ss:3.2}
In this subsection, we will prove that the genus one $S$-matrix of $V$ defined in Section \ref{ss:3.1} is
unitary, and its consequences.  Our approach is slightly different from that given in \cite{ENO} for the unitarity of a normalized
$S$-matrix of a modular category with the following lemma. Recall that $S^2=C$ from Section \ref{ss:3.1}. In fact, this equality holds for any symmetric matrix satisfying the Verlinde formula as follows:

\begin{lem}\label{l:unitary}
  Let $\CC$ be a fusion category over $\BC$ with commutative Grothendieck ring. Suppose $A$ is a complex symmetric matrix indexed by $\Pi_\CC$ such that $A_{0r} \ne 0$ for all $r \in \Pi_\CC$ and $A$ satisfies the Verlinde formula in the sense that
  \begin{equation} \label{eq:Ver1}
    N_{ij}^k = \sum_{r \in \Pi_\CC} \frac{A_{ir}A_{jr}A_{k^*r}}{A_{0r}}
  \end{equation}
   for all $i,j,k \in \Pi_\CC$, where $N_{ij}^k$ is the fusion rule of $\CC$. Then, $A_{0r} \in \BR$, $A^2=C$ and  $A$ is unitary, where $C_{ij}=\delta_{ij^*}$ for $i, j \in \Pi_\CC$.
\end{lem}
\begin{proof}
By the Verlinde
formula  \eqref{eq:Ver1}, $\sum_{r \in \Pi_\CC} A_{ir}A_{jr} = N_{ij}^0 = \delta_{ij^*}$ for any $i, j\in \Pi_\CC$. This implies $A$ is invertible and $(A\inv)_{ij} = A_{ij^*} = A_{i^*j}$ for $i, j \in \Pi_\CC$. Hence, we have $A_{i^* j^*}=A_{ij}$ and $A_{0j} = A_{0j^*}$ for all $i,j \in \Pi_\CC$. Let $\KK_0(\CC)$ be the Grothendieck  ring of $\CC$ and let $\KK_\BC(\CC) =  \KK_0(\CC) \o_\BZ\BC$. Note that $\KK_\BC(\CC)$ is commutative $\BC$-algebra.   For $b \in \Pi_\CC$, let
$$
e_b = A_{0b} \sum_{a \in \Pi_\CC} A_{ab} \,a \in \KK_\BC(\CC),
$$
and $E=\{e_b \mid b\in \Pi_\CC\}$. Then
  \begin{eqnarray*}
    e_a e_b &=& A_{0a}A_{0b}\sum_{c,d} A_{ac} A_{bd} cd\\
    & =& A_{0a}A_{0b} \sum_{c,d,r} A_{ac} A_{bd}N_{cd}^r \,r\\
    &=&
   A_{0a}A_{0b} \sum_{c,d,r, z} A_{ac} A_{bd} \frac{A_{cz} A_{dz}A_{r^* z}}{A_{0z}} \,r \\
   & =&
   A_{0a}A_{0b}\sum_{r, z} \frac{\delta_{az^*}\delta_{bz^*} A_{r^*z}}{A_{0z}} r\\
   &=& \delta_{ab}A_{0a}^2\sum_{r} \frac{ A_{r^*a^*}}{A_{0a^*}} r \\
   & =&\delta_{ab}A_{0a}\sum_{r} A_{ra} r\\
   &=&\delta_{ab}e_a\,.
   \end{eqnarray*}
Hence, $E$ is the set of all primitive idempotents of $\KK_\BC(\CC)$.

 The duality permutation defined on $\Pi_\CC$ can be extended to a sesquilinear linear map $\dag$ on $\KK_\BC(\CC)$, i.e.
 $$
 (\sum_{x \in \Pi_\CC} \a_x x)^\dag = \sum_{x \in \Pi_\CC} \ol \a_x x^*
 $$
for $\a_x \in \BC$. Moreover, $\dag$ is an $\BR$-algebra automorphism of $\KK_\BC(\CC)$, but $\dag$ is not $\BC$-linear. In particular, $e_b^\dag \in E$ and hence $\dag$ defines a permutation on $E$.

For $x \in \KK_\BC(\CC)$, denote by $\e(x)$ the coefficient of the unit object $0$ in $x$. Then
   $$
   \e(ab) = N_{ab}^0 =\delta_{ab^*} \text{ for } a,b \in \Pi_\CC\,.
   $$
   We now define the sesquelinear form $(\cdot, \cdot)$ on $\KK_\BC(\CC)$ by
 $$
 (x, y) = \e(xy^\dag)\,.
 $$
 Note that $(x,x) > 0$ for $x \ne 0$.
 Thus
 $$
 0 < (e_b, e_b) = \e(e_b e_b^\dag)\,.
 $$
 Therefore, $e_b^\dag=e_b$ and so $(e_b, e_b)=A_{0b}^2 > 0$ and $A_{0b}A_{ab} = \ol{A_{0b}A_{a^*b}}$ for all $a, b \in \Pi_\CC$.  The former implies $A_{0b} \in \BR$ and hence $A_{ab} = \ol{A_{a^*b}}$ for all $a, b \in \Pi_\CC$. Therefore, $A$ is unitary.
\end{proof}

The following corollary is an immediate consequence of Lemma \ref{l:unitary} and the modularity of $\CC_V$ presented in Theorem \ref{4.4}.
\begin{cor}\label{3.5}
 Let $V$ be a vertex operator algebra satisfying  (V1) and (V2). Then the genus one $S$-matrix of
 $V$ defined in Section \ref{ss:3.1} is unitary and satisfies $\bar{S}=SC.$
 \end{cor}

The following result can be proved easily by using Corollary \ref{3.5}:
 \begin{cor}\label{3.6}
 Let $V$ be a vertex operator algebra satisfying  (V1) and (V2).
 For any $u\in V_{[m]},$ $v\in V_{[n]},$  $\gamma=\mtx{a & b\\ c& d}\in SL_2(\Z)$ and $\tau_1,\tau_2\in \h$  we
 have
 $$\sum_{i}Z_i(u,\gamma\tau_1)\overline{Z_i(v,\gamma\tau_2)}=
 (c\tau_1+d)^m\overline{(c\tau_2+d)^n}\sum_{i}Z_i(u,\tau_1)\overline{Z_i(v,\tau_2)}.$$
 In particular, $\sum_{0\leq i\leq p}|\chi_i(\tau)|^2$ is invariant under the action of $SL_2(\Z).$
 \end{cor}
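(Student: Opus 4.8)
The plan is to reduce everything to the \emph{unitarity of the entire representation} $\rho_V$, and then apply the modular transformation law of Theorem \ref{2.3} termwise. First I would observe that $\rho_V(\gamma)$ is unitary for every $\gamma \in \SL{}$. Since $\SL{}$ is generated by $\fs$ and $\ft$, it suffices to check this for the two generators. The matrix $T = \rho_V(\ft)$ is diagonal with entries $T_{jj} = e^{2\pi i(\lambda_j - c/24)}$; as $\lambda_j$ and $c$ are rational (hence real), each such entry has modulus $1$, so $T$ is unitary. The matrix $S = \rho_V(\fs)$ is unitary by Proposition \ref{3.5}. Consequently every product of $S$, $T$ and their inverses is unitary, so $\rho_V(\gamma)^{\dag} \rho_V(\gamma) = \id$, i.e. $\sum_i \overline{\gamma_{ik}}\,\gamma_{ij} = \delta_{jk}$ for all $j,k$.

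Next I would expand the left-hand side using Theorem \ref{2.3}. Writing $u \in V_{[m]}$, $v \in V_{[n]}$ and applying the transformation law to each factor gives
$$Z_i(u,\gamma\tau_1) = (c\tau_1+d)^m \sum_j \gamma_{ij} Z_j(u,\tau_1), \qquad \overline{Z_i(v,\gamma\tau_2)} = \overline{(c\tau_2+d)^n} \sum_k \overline{\gamma_{ik}}\,\overline{Z_k(v,\tau_2)}.$$
Substituting and interchanging the order of summation, the scalar prefactor $(c\tau_1+d)^m \overline{(c\tau_2+d)^n}$ factors out and the remaining sum becomes $\sum_{j,k}\big(\sum_i \overline{\gamma_{ik}}\,\gamma_{ij}\big) Z_j(u,\tau_1)\overline{Z_k(v,\tau_2)}$. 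By the unitarity relation from the previous step the inner sum collapses to $\delta_{jk}$, leaving exactly $(c\tau_1+d)^m \overline{(c\tau_2+d)^n}\sum_j Z_j(u,\tau_1)\overline{Z_j(v,\tau_2)}$, which is the asserted identity.

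Finally, for the \emph{in particular} statement I would specialize to $u=v=\bm1$ and $\tau_1=\tau_2=\tau$. Since $\bm1 \in V_{[0]}$ we have $m=n=0$, the prefactor $(c\tau+d)^0\overline{(c\tau+d)^0}$ equals $1$, and $Z_i(\bm1,\tau)=\chi_i(\tau)$, so the identity reads $\sum_i |\chi_i(\gamma\tau)|^2 = \sum_i |\chi_i(\tau)|^2$, proving the $\SL{}$-invariance. I do not expect a genuine obstacle here: the only substantive input is the unitarity of $S$ (already secured in Proposition \ref{3.5}), and the rest is a direct termwise computation; the one point requiring care is the bookkeeping of complex conjugates when transporting $\rho_V(\gamma)^{\dag}\rho_V(\gamma)=\id$ into the double sum.
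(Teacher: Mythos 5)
Your proposal is correct and follows essentially the same route as the paper's own proof: unitarity of $T$ from the rationality of the $\lambda_j$ and $c$, unitarity of $S$ from Proposition \ref{3.5}, hence unitarity of the full representation $\rho_V$, and then the termwise expansion via Theorem \ref{2.3} with the double sum collapsed by $\sum_i \gamma_{ij}\overline{\gamma_{ik}}=\delta_{jk}$. Your write-up is in fact slightly more careful than the paper's (which elides the unitarity collapse in a single displayed step), but there is no substantive difference.
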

 \begin{proof}
 Note that $T$ is a diagonal matrix with diagonal entries $e^{2\pi i(\lambda_j-c/24)}$ for $j=0,...,p$ which
 is clearly a unitary matrix as $\lambda_j$ and $c$ are rational numbers.  It follows from
  Corollary \ref{3.5} that the representation $\rho$ is unitary. Set
  $$f(\tau_1,\tau_2)=\sum_{i}Z_i(u,\tau_1)\overline{Z_i(v,\tau_2)}.$$
  \begin{eqnarray*}
  f(\gamma\tau_1,\gamma\tau_2)&=&\sum_{i}Z_i(u,\gamma\tau_1)\overline{Z_i(v,\gamma\tau_2)}\\
  &=&(c\tau_1+d)^m\overline{(c\tau_2+d)^n}\sum_{i,j,k}\gamma_{ij}Z_j(u,\tau_1)\overline{\gamma_{ik}}\overline{Z_k(v,\tau_2)}\\
  & =&(c\tau_1+d)^m\overline{(c\tau_2+d)^n}\sum_{i}Z_i(u,\tau_1)\overline{Z_i(v,\tau_2)},
  \end{eqnarray*}
as required.
 \end{proof}

Here we use Corollary \ref{3.6} to study the extensions of vertex operator algebras. As before we assume that
$V$ is a vertex operator algebra satisfying  (V1) and (V2).  We also assume that $U$ is an extension
of $V$ satisfying (V1) and (V2). Then $U=\sum_{i}n_iM^i$ as a $V$-module, where $n_i\geq 0$ and $n_0=1$, since the
vacuum vector is unique. The main goal is to determine the possible values of $n_i.$ There have been a lot of
discussions
on this in the literature (see for example, \cite{CIZ1}-\cite{CIZ2} and \cite{G1}) using the modular invariance
of the characters. It seems that using the characters of irreducible modules is not good enough as the
characters of irreducible modules are not linearly independent in general. In this section we use the conformal
blocks instead of the characters to approach the problem.

For $u,v\in V$, we set
$$f_V(u,v,\tau_1,\tau_2)=\sum_{i=0}^pZ_i(u,\tau_1)\overline{Z_i(v,\tau_2)}$$
(cf. Corollary \ref{3.6}). Similarly we can define
$$f_U(u,v,\tau_1,\tau_2)=\sum_{M}Z_M(u,\tau_1)\overline{Z_M(v,\tau_2)}$$
for $u,v\in U$ where $M$ ranges through the equivalent classes of irreducible $U$-modules.
Since each irreducible $U$-module $M$ is a direct sum of irreducible $V$-modules, we see that for $u,v\in V$
$$f_U(u,v,\tau_1,\tau_2)=\sum_{i,j=0}^pX_{ij}Z_i(u,\tau_1)\overline{Z_j(v,\tau_2)}$$
for some $X_{ij}\in\Z_+$ for all $i,j.$ If $u=v=\bm1$ and $\tau_1=\tau_2=\tau,$ then
$f_U(\bm1,\bm1,\tau,\tau)$, which is the sum of square norms of the irreducible characters of $U$, is
$SL_2(\Z)$-invariant. We now determine the matrix $X=[X_{ij}].$ It will be clear from our proof below that
the $SL_2(\Z)$-invariance of  $f_U(\bm1,\bm1,\tau,\tau)$ is not good enough to determine the matrix
$X.$

\begin{prop} The matrix $X$ satisfies \emph{(i)} $X_{00}=1,$ \emph{(ii)} $X\gamma=\gamma X$ where $\gamma\in
SL_2(\Z)$ and is identified with the modular transformation matrix $\rho_V(\gamma).$
\end{prop}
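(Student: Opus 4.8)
The plan is to identify the entries of $X$ with multiplicities of irreducible $V$-modules inside the irreducible $U$-modules, and then to extract the commutation relation from the modular transformation law of the two-variable conformal block $f_U$ together with the unitarity of $\rho_V$. Since $U$ is an extension of $V$, the two share the vacuum $\bm1$ and the conformal element $\omega$, so for $u\in V$ the zero-mode $o(u)$ and the operator $L(0)-c/24$ act on a $U$-module $M$ exactly as they do on $M$ regarded as a $V$-module. Writing $M=\bigoplus_{i=0}^{p} m_i^{M}\,M^{i}$ for the decomposition of $M$ into irreducible $V$-modules (possible since $V$ is rational), we obtain $Z_M(u,\tau)=\sum_i m_i^{M} Z_i(u,\tau)$ for $u\in V$, with no weight shift because the grading of each constituent is intrinsic to the common $L(0)$. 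Substituting this into the definition of $f_U$ and matching coefficients against the given expansion --- which is legitimate because the trace functions $Z_0,\dots,Z_p$ are linearly independent as functions on $V\times\h$ (cf.\ \cite{Z,DLM2}) --- yields
$$
X_{ij}=\sum_{M} m_i^{M}\,m_j^{M},
$$
the sum running over the isomorphism classes of irreducible $U$-modules $M$.

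For (i), the formula above gives $X_{00}=\sum_M (m_0^{M})^2$. The adjoint module $M=U$ contributes $(m_0^{U})^2=1$, since $\dim U_0=1$ forces the multiplicity $n_0=m_0^U$ of $V$ in $U$ to equal $1$. It remains to show $m_0^{M}=0$ for every irreducible $U$-module $M\not\cong U$; equivalently, that the vacuum $V$-module $M^0=V$ is a $V$-constituent of $M$ only when $M$ is the adjoint module. Since $m_0^{M}=\dim\Hom_V(V,M)$, this is the assertion $\dim\Hom_V(V,M)=\delta_{M,U}$, which follows from Frobenius reciprocity for the pair $V\subseteq U$: the induction of the vacuum $V$-module is the adjoint $U$-module $U$, so $\Hom_V(V,M)\cong\Hom_U(U,M)$, and the latter space is $\delta_{M,U}$ because $U$ and $M$ are simple $U$-modules. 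Hence $X_{00}=1$.

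For (ii), apply Corollary \ref{3.6} to $U$ (which satisfies (V1) and (V2)): for $u\in U_{[m]}$, $v\in U_{[n]}$ and $\gamma=\mtx{a & b\\ c& d}\in SL_2(\Z)$,
$$
f_U(u,v,\gamma\tau_1,\gamma\tau_2)=(c\tau_1+d)^{m}\,\overline{(c\tau_2+d)^{n}}\,f_U(u,v,\tau_1,\tau_2).
$$
Take $u,v\in V$ homogeneous for the square-bracket grading, expand the left-hand side using the given form of $f_U$ and the transformation law $Z_i(u,\gamma\tau)=(c\tau+d)^{\wt[u]}\sum_k \rho_V(\gamma)_{ik}Z_k(u,\tau)$ of Theorem \ref{2.3}, and cancel the common automorphy factor. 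Comparing coefficients of $Z_k(u,\tau_1)\overline{Z_l(v,\tau_2)}$ --- again using linear independence of the trace functions --- gives
$$
\rho_V(\gamma)^{T}\,X\,\overline{\rho_V(\gamma)}=X \qquad\text{for all }\gamma\in SL_2(\Z).
$$
Now specialize to the generators. Since $S$ is symmetric (Theorem \ref{2.4}(i)) and $T$ is diagonal, and both are unitary (Proposition \ref{3.5}), we have $\overline{S}=S^{-1}$ and $\overline{T}=T^{-1}$; the relation then reads $SXS^{-1}=X$ and $TXT^{-1}=X$, i.e.\ $X$ commutes with $S=\rho_V(\fs)$ and with $T=\rho_V(\ft)$. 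As $\fs$ and $\ft$ generate $SL_2(\Z)$, $X$ commutes with $\rho_V(\gamma)$ for every $\gamma$, which is (ii).

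The decisive ingredient is the linear independence of the trace functions $Z_0,\dots,Z_p$ jointly in $v\in V$ and $\tau\in\h$: this is exactly what lets the two-variable conformal block $f_U(u,v,\tau_1,\tau_2)$ determine $X$ and produce the matrix identity $\rho_V(\gamma)^{T}X\overline{\rho_V(\gamma)}=X$, whereas the scalar identity coming from $f_U(\bm1,\bm1,\tau,\tau)$ is too weak, since the characters $\chi_i$ may satisfy linear relations. The other point requiring care is the input $m_0^{M}=\delta_{M,U}$ in (i), namely that the vacuum module occurs among the $V$-constituents of no non-adjoint irreducible $U$-module; this rests on the simplicity of $U$ via Frobenius reciprocity.
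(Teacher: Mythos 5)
Your proof of (ii) is, in structure, the paper's own argument: apply Corollary \ref{3.6} to $U$, write $f_U(u,v,\tau_1,\tau_2)={\bf Z}(u,\tau_1)^TX\overline{{\bf Z}(v,\tau_2)}$, cancel the automorphy factors, deduce $\rho_V(\gamma)^TX\overline{\rho_V(\gamma)}=X$, and convert this to commutation via unitarity (your generator-by-generator version with $\overline{S}=S^{-1}$, $\overline{T}=T^{-1}$ is equivalent to the paper's remark that $\gamma$ is unitary). The genuine gap is at the step you yourself call decisive: that $\sum_{i,j}a_{ij}Z_i(u,\tau_1)\overline{Z_j(v,\tau_2)}=0$ for all $u,v,\tau_1,\tau_2$ forces $a_{ij}=0$. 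You dispatch this with ``(cf. \cite{Z,DLM2})'', but this does not discharge it: \cite{DLM2} contains no statement about trace functions, and the modular-invariance results of \cite{Z} do not by themselves exclude linear relations --- indeed the specializations at $v=\bm1$, the $q$-characters, can satisfy relations (a module and its non-isomorphic contragredient have equal characters). The paper treats exactly this point as the technical heart of the proof and proves it: extract the coefficient of $q_1^{\lambda_i-c/24}\bar q_2^{\lambda_j-c/24}$, and use the semisimplicity of the algebras $A_n(V)$ of \cite{DLM3}, whose inequivalent simple modules include the homogeneous pieces $M^k_{\lambda_k+m_k}$, to choose $u,v\in V$ with $o(u)=1$ on $M^i_{\lambda_i}$ and $o(u)=0$ on the other homogeneous spaces entering that coefficient (similarly for $v$ and $M^j_{\lambda_j}$); this isolates a nonzero multiple of $a_{ij}$. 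Your reduction of two-variable independence to one-variable joint independence (products of independent functions are independent) is fine, but until the joint independence of $Z_0,\dots,Z_p$ on $V\times\h$ is either located precisely in the literature or proved by this $A_n(V)$ argument, your proof of (ii) has a hole exactly where the paper puts its main content.

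For (i) the situation is reversed: the paper gives no argument, and you correctly identify that one needs $m_0^M=\delta_{M,U}$, not merely $n_0=1$. However, invoking ``Frobenius reciprocity for $V\subseteq U$'' presupposes an induction functor with $U\boxtimes_V V\cong U$ as $U$-modules and the adjunction $\Hom_U(U\boxtimes_V X,M)\cong\Hom_V(X,M)$; making this rigorous requires identifying $U$ with an algebra object in $\CC_V$ and $U$-modules with its local modules, machinery well beyond what this paper sets up. There is an elementary substitute: a nonzero $V$-homomorphism $V\to M$ sends $\bm1$ to a vector $w$ with $L(-1)w=0$; since $V$ and $U$ share the conformal vector, $w$ is a vacuum-like vector for $U$ in the sense of \cite{L}, so $u\mapsto u_{-1}w$ defines a nonzero $U$-homomorphism $U\to M$, and irreducibility of $U$ and $M$ forces $M\cong U$. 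Then $X_{00}=\sum_M(m_0^M)^2=n_0^2=1$ as desired.
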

 \begin{proof}
For any $u\in V_{[m]}$, let ${\bf Z}(u,\tau)=\mtx{ Z_0(u,\tau)\\ \vdots \\
Z_p(u,\tau)}.$ Then
$$
{\bf Z}(u,\gamma\tau)=(c\tau+d)^m\gamma  {\bf Z}(u,\tau) \quad\text{and}\quad
f_U(u,v,\tau_1,\tau_2)={\bf Z}(u,\tau_1)^TX\overline{{\bf Z}(v,\tau_2)}.
$$
 By Corollary \ref{3.6},
\begin{eqnarray*}
&&
 (c\tau_1+d)^m\overline{(c\tau_2+d)^n}{\bf Z}(u,\tau_1)^TX\overline{{\bf Z}(v,\tau_2)}\\
&=&f_U(u,v,\gamma\tau_1,\gamma\tau_2)\\
&=&{\bf Z}(u,\gamma\tau_1)^TX\overline{{\bf Z}(v,\gamma\tau_2)}\\
&=& (c\tau_1+d)^m\overline{(c\tau_2+d)^n}{\bf Z}(u,\tau_1)^T\gamma^TX\bar\gamma\overline{{\bf Z}(v,\tau_2)}.
\end{eqnarray*}
This implies that
$${\bf Z}(u,\tau_1)^TX\overline{{\bf Z}(v,\tau_2)}={\bf Z}(u,\tau_1)^T\gamma^TX\bar\gamma\overline{{\bf
Z}(v,\tau_2)}$$
for all $u,v.$ Since $\gamma$ is unitary, it is enough to show that if ${\bf Z}(u,\tau_1)^TA\overline{{\bf
Z}(v,\tau_2)}=0$ for all $u,v\in V$ where $A=[a_{ij}]$ is a fixed matrix, then $A=0.$

Note that ${\bf Z}(u,\tau_1)^TA\overline{{\bf
Z}(v,\tau_2)}=\sum_{ij}a_{ij}Z_i(u,\tau_1)\overline{Z_j(v,\tau_2)}.$
For simplicity, we set $q_j=e^{2\pi i\tau_j}$ for $j=1,2.$ Then
\begin{eqnarray*}
& &0={\bf Z}(u,\tau_1)^TA\overline{{\bf Z}(v,\tau_2)}\\
& &\ \ \ =\sum_{i,j}\sum_{m_i,n_j\geq
0}a_{ij}(\tr_{M^i_{\lambda_i+m_i}}o(u)\overline{\tr_{M^j_{\lambda_j+n_j}}o(v)})q_1^{\lambda_i+m_i-c/24}\overline{q_2^{\lambda_j+n_j-c/24}}.
\end{eqnarray*}
This implies that each coefficient of $q_1^m\overline{q_2^n}$ for any rational numbers $m,n$ must be zero. We
now
prove that $a_{ij}=0$ for all $i,j.$ Fix $i,j.$ Then the coefficient
 of $q_1^{\lambda_i-c/24}\bar q_2^{\lambda_j-c/24}$ in ${\bf Z}(u,\tau_1)^TA\overline{{\bf Z}(v,\tau_2)}$ is
$$\sum_{k,l}a_{kl}\tr_{M^k_{\lambda_k+m_k}}o(u)\overline{\tr_{M^l_{\lambda_l+n_l}}o(v)}$$
where $k,l\in\{0,...,p\}$ satisfy $m_k+\lambda_k=\lambda_i,  n_l+\lambda_l=\lambda_j.$
Fix $n\geq 0$ such that $n\geq m_k,n_l$ for all $k,l$ occurring in the summation above. Recall from
\cite{DLM3} that there is a finite dimensional semisimple associative algebra $A_n(V)$ such that
$M^k_{m_k+\lambda_k}, M^l_{n_l+\lambda_l}$ are the inequivalent simple modules of $A_n(V).$ As a result we can
choose $u,v\in V$ such that $o(u)=1$ on $M^i_{\lambda_i}$ and $o(u)=0$ on
all other $M^k_{\lambda_k+m_k},$ $o(v)=1$ on $M^j_{\lambda_j}$ and $o(v)=0$ on all other $M^l_{\lambda_l+n_l}.$
Thus, for this $u$ and $v,$ the coefficient of $q_1^{\lambda_i-c/24}\bar q_2^{\lambda_j-c/24}$ in ${\bf
Z}(u,\tau_1)^TA\overline{{\bf Z}(v,\tau_2)}$
is a nonzero multiple of $a_{ij}.$ This forces $a_{ij}=0.$ The proof is complete.
\end{proof}
\subsection{The congruence property theorem}

Now we come back to the theories of vertex operator algebras. Let $V$ be a rational and $C_2$-cofinite vertex
operator algebra. For any $V$-module $M$, set $\theta_{M}=e^{2\pi i L(0)}.$ The following result from
\cite[Theorem 4.1]{H3} is important in this paper.
\begin{thm}\label{4.4}
 Let $V$ be a vertex operator algebra satisfying  (V1) and (V2). Then the $V$-module category $\CC_V$
 with the dual $M'$ ($M$ a $V$-module),  braiding $\sigma$ which is denoted by
 $\CC$ in \cite[Page 877]{H3}  and twist $\theta$ is a modular tensor category over $\BC.$
 \end{thm}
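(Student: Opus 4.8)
The plan is to verify, one by one, the three defining ingredients of a modular tensor category recorded in Section~\ref{ss:MTC}: that $\CC_V$ is a fusion category over $\BC$, that the braiding $\sigma$ together with the twist $\theta$ makes it a braided spherical fusion category, and finally that the resulting $S$-matrix is non-singular. Almost all of the input is furnished by the Huang--Lepowsky vertex tensor category construction \cite{HL1,HL2,HL3,H1} and Huang's rigidity and modularity theorems \cite{H2,H3}; the role of this proof is to assemble these and to match the categorical data with the genus-one data of Section~\ref{ss:3.1}. First I would establish the fusion structure. The category $\CC_V$ of $V$-modules is abelian and $\BC$-linear with finite-dimensional morphism spaces by standard theory. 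Rationality in (V2) yields complete reducibility, hence semisimplicity, and \cite{DLM2} supplies the finitely many isomorphism classes of simple objects $M^0,\dots,M^p$. The monoidal product is $\boxtimes$, whose existence and the decomposition $M^i\boxtimes M^j=\sum_k N_{i,j}^k M^k$ are recorded in Theorem~\ref{2.2}; the unit object is $M^0=V$, which is simple by (V1) since $\dim V_0=1$.

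Next comes the rigid, braided, spherical structure. The candidate left dual of a module $W$ is the contragredient $W'$, with evaluation and coevaluation morphisms built from intertwining operators; the zig-zag identities are exactly Huang's rigidity theorem \cite{H2}. The braiding is the commutativity isomorphism $\sigma$ of the vertex tensor category, and the twist is $\theta_W=e^{2\pi i L(0)}$. One then checks the ribbon axioms $\theta_{V\o W}=(\theta_V\o\theta_W)\circ \sigma_{W,V}\circ \sigma_{V,W}$ and $\theta_W^\vee=\theta_{W'}$, which follow from the $L(0)$-grading and the behaviour of intertwining operators under the braiding. By the correspondence recalled in Section~\ref{ss:MTC}, this ribbon structure determines a spherical structure on $\CC_V$.

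Finally I would verify non-singularity of the $S$-matrix. The categorical $S$-matrix $\ts_{ij}=\ptr(\sigma_{V_j,V_{i^*}}\circ \sigma_{V_{i^*},V_j})$ coincides, up to the normalization fixed by $\theta$, with the genus-one $S$-matrix of Theorem~\ref{2.3}. By Proposition~\ref{3.5} that matrix is unitary, hence invertible; equivalently, Theorem~\ref{2.4}(i) gives $S^2=C$ with $C^2=\id$, so $S$ is non-singular. This is precisely the modularity condition, and it completes the verification that $\CC_V$ is a modular tensor category over $\BC$.

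The hard part will be the rigidity and the identification of the categorical $S$-matrix with the genus-one $S$-matrix. Huang's rigidity theorem rests on the convergence and associativity of products and iterates of intertwining operators guaranteed by $C_2$-cofiniteness, and matching the double-braiding pivotal trace with the modular-transformation coefficient $S_{ij}$ is where the analytic input of \cite{H2,H3} is essential. By contrast, once that identification is in hand, non-degeneracy is immediate from the unitarity already established in Proposition~\ref{3.5}, so the burden of the argument lies entirely in the construction of the tensor, rigid, and modular structure rather than in the final non-singularity check.
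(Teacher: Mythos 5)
The paper does not actually prove this theorem: it is quoted verbatim from Huang's work \cite{H3} (the paper introduces it with ``The following result from \cite{H3} is important in this paper''), and the statement is precisely the main result of that reference. Your outline is a faithful reconstruction of how Huang's proof is organized --- semisimplicity and finiteness from rationality and \cite{DLM2}, the tensor structure from \cite{HL1,HL2,HL3,H1}, rigidity and the identification of the double-braiding pivotal trace with $S_{ij}/S_{00}$ from \cite{H2,H3}, and invertibility of $S$ from $S^2=C$ --- so your proposal and the paper take the same route, namely deferring all of the substantive analytic work to Huang.
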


Note that $\End_V(M^i)=\mathbb{C}, 0\leq i\leq p.$ Recall from discussion in Sections \ref{ss:sphcat} and
\ref{ss:MTC} that the pivotal dimension $d_i$ of the simple $V$-module is a non-zero real number, and the global
dimension $\dim \CC_V = \sum_{i=0}^p d_i^2 \ge 1$. We let $\ts$ and $\tt$ be the $S$ and $T$-matrices of
$\CC_V$, and $D = \sqrt{\dim \CC_V}$, the positive square root of $\dim \CC_V$, and $\bc$ the central charge of
$\CC_V$. We fix the normalization $s=\frac{1}{D} \ts$, and simply call $s$ \emph{the normalized  $S$-matrix of
$\CC_V$}. We will prove in Theorem \ref{iden} that $s$ is identical to the genus one $S$-matrix of $V$ up to a sign.

\begin{thm}\label{iden}
 Let $V$ be a vertex operator algebra satisfying  (V1) and (V2). Then
 \begin{enumerate}
   \item[(i)] The normalized $S$-matrix $s$ of $\CC_V$ and  the genus one $S$-matrix of $V$ are identical up to a sign.

   \item[(ii)] The representation $\rho_V$ defined by modular transformation of trace functions is a modular
       representation of $\CC_V$. In particular, $\ker \rho_V$ is a  congruence subgroup of level $n$ where
       $n$ is the order of the genus one $T$-matrix of $V$, and $\rho_V$ is $\BQ_n$-rational.
       \item[(iii)] The central charge $\bc$ of $\CC_V$ is equal to the central charge $c$ of $V$ modulo
           $4$.
 \end{enumerate}
\end{thm}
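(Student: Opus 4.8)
The plan is to regard the genus one matrices $S,T$ of $V$ (Theorem \ref{2.3}) and the normalized categorical matrices $s=\frac1D\ts$, $\tt$ of $\CC_V$ as two incarnations of the same modular data on the common index set $\{0,\dots,p\}$, and to prove (i) by showing that each of $S,s$ is determined by the shared fusion rules $N_{ij}^k$, the shared twists $\theta_k=e^{2\pi i\lambda_k}$, and its $0$-th column. The ingredients already available are: $S$ is symmetric and unitary with $S^2=C$ (Theorem \ref{2.4}, Proposition \ref{3.5}), $S_{0j}>0$ (Lemma \ref{positive}), $\overline{S_{ij}}=S_{i^*j}$ (Proposition \ref{3.5}), and $S$ obeys the Verlinde formula (Theorem \ref{2.4}\,(iii)). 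The categorical $s$ carries the identical list: $s^2=\frac1{D^2}\ts^2=C$ by \eqref{eq:STrelations}, $s_{0j}=d_j/D>0$ since the quantum dimensions of $\CC_V$ are positive, and both Verlinde and $\overline{s_{ij}}=s_{i^*j}$ hold for $s$ (standard for modular categories, or by the argument of Proposition \ref{3.5}). Finally $\tt$ is diagonal with $\tt_{jj}=\theta_j$, while $T_{jj}=e^{2\pi i(\lambda_j-c/24)}$, so $T=\mu\tt$ with $\mu=e^{-\pi i c/12}$.

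For (i) I would proceed in three steps. First, match the $0$-columns: the maps $i\mapsto S_{i0}/S_{00}$ and $i\mapsto s_{i0}/s_{00}=d_i$ are characters of the fusion ring with strictly positive values, hence both equal the unique Perron--Frobenius character; thus $S_{i0}/S_{00}=s_{i0}/s_{00}$ for all $i$, and unitarity of the $0$-column together with $\sum_i d_i^2=\dim\CC_V=D^2$ forces $S_{00}=s_{00}=1/D$, so that $S_{i0}=s_{i0}=d_i/D$. Second, extract a common twist relation: since every modular (hence genuine $\SL{}$-) representation satisfies $\fs\ft\fs=\ft^{-1}\fs\ft^{-1}$, the genuine representation $\rho_V$ gives $STS=T^{-1}ST^{-1}$, which after substituting $T=\mu\tt$ becomes $S\tt S=e^{\pi i c/4}\tt^{-1}S\tt^{-1}$; the same relation applied to a modular representation $\rho^\zeta$ of $\CC_V$ gives $\ts\tt\ts=p_{\CC_V}^+\tt^{-1}\ts\tt^{-1}$, i.e. $s\tt s=e^{\pi i\bc/4}\tt^{-1}s\tt^{-1}$. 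Thus each $X\in\{S,s\}$ satisfies a relation $X\tt X=\kappa\,\tt^{-1}X\tt^{-1}$ with $|\kappa|=1$. Third, feed this relation into the Verlinde formula: using symmetry, $X^2=C$ and $\overline{X_{ij}}=X_{i^*j}$, a direct manipulation collapses $\frac1{\theta_i\theta_j}\sum_k N_{ij}^k\theta_k X_{k0}$ to $\kappa\bar\kappa\,X_{ij^*}=X_{ij^*}$, the scalar $\kappa$ entering once through the relation and once through its conjugate. The resulting balancing identity $X_{ij^*}=\frac1{\theta_i\theta_j}\sum_k N_{ij}^k\theta_k X_{k0}$ depends only on the fusion rules, the twists and the $0$-column, all of which agree for $S$ and $s$; hence $S_{ij^*}=s_{ij^*}$ for all $i,j$, i.e. $S=s$.

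Parts (ii) and (iii) are then short. For (ii), (i) gives $\eta(\rho_V(\fs))=\eta(S)=\eta(\ts)=\orho_{\CC_V}(\fs)$ and $\eta(\rho_V(\ft))=\eta(T)=\eta(\tt)=\orho_{\CC_V}(\ft)$, so $\eta\circ\rho_V=\orho_{\CC_V}$ on generators; thus $\rho_V$ is a lifting of $\orho_{\CC_V}$, i.e. a modular representation of $\CC_V$, and Theorem II\,(i),(ii) immediately yield that $\ker\rho_V$ is a congruence subgroup of level $n=\ord(T)$ with $\rho_V$ being $\BQ_n$-rational. For (iii), the genuine relation $(ST)^3=S^2=C$ combined with (i) and $T=\mu\tt$ gives $(ST)^3=\mu^3D^{-3}(\ts\tt)^3=\mu^3D^{-3}p_{\CC_V}^+\ts^2=\mu^3p_{\CC_V}^+D^{-1}C$; equating to $C$ yields $p_{\CC_V}^+/D=\mu^{-3}=e^{\pi i c/4}$, and since $p_{\CC_V}^+/\sqrt{\dim\CC_V}=e^{\pi i\bc/4}$ by \eqref{eq:charge}, we conclude $\bc\equiv c\pmod 8$.

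\textbf{Main obstacle.} The crux is the balancing computation in step three of (i): one must verify that the anomaly-dependent scalar $\kappa$ cancels (as $\kappa\bar\kappa$), so that $S$ and $s$ satisfy literally the same fusion-rule/twist formula, and one must secure the reality relation $\overline{s_{ij}}=s_{i^*j}$ together with the positivity of the categorical dimensions for $\CC_V$. The Perron--Frobenius matching of the $0$-columns and the bookkeeping $\tt_{jj}=e^{2\pi i\lambda_j}$, $T=\mu\tt$ are routine by comparison.
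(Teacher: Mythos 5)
Your argument for (i) breaks at its first step, and the break is fatal to the whole proposal. The Perron--Frobenius matching of the $0$-columns rests on the claim that the categorical dimensions of $\CC_V$ are positive (``$s_{0j}=d_j/D>0$ since the quantum dimensions of $\CC_V$ are positive''). This is not available: nothing in the axioms of a modular tensor category forces $d_j>0$, and the paper, in the paragraph immediately preceding Theorem \ref{iden}, is careful to record only that each $d_j$ is a \emph{non-zero real} number. Modular categories with negative simple-object dimensions exist --- the Yang--Lee category, a Galois conjugate of the Fibonacci category, has $d_1=(1-\sqrt{5})/2<0$ --- so positivity is a genuine theorem about $\CC_V$, not a formality. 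In this paper the identification of $d_j$ with a positive quantity is Proposition \ref{iden dim}, which is deduced \emph{from} Theorem \ref{iden}; invoking it as an input is circular. Without positivity, the most you can say is that $j\mapsto d_j$ is \emph{some} character of the common fusion ring, i.e.\ $d_j=S_{jq_0}/S_{0q_0}$ for some column $q_0$ which need not be $q_0=0$ (only the positive character is pinned down by Frobenius--Perron uniqueness, and that one is $S_{j0}/S_{00}$ by Lemma \ref{positive}). Your rigidity identity then equates $s$ with the modular datum built on the wrong column, and $S=s$ does not follow.

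The missing ingredient is precisely the paper's key tool, which your proposal never invokes: Huang's theorem \cite{H3} that the pivotal trace of the double braiding $\sigma_{M^{i^*}M^j}\sigma_{M^jM^{i^*}}$ equals $S_{ij}/S_{00}$, i.e.\ $\ts=\frac{1}{S_{00}}S$ as matrices. Granting this, the paper's proof of (i) is immediate: $S=\lambda s$ with $\lambda=S_{00}/s_{00}$; unitarity of $S$ (Proposition \ref{3.5}) and of $s$ gives $|\lambda|=1$; and then only two positivity facts are needed --- $S_{00}>0$ (Lemma \ref{positive}) and $s_{00}=1/D>0$, the latter automatic since $d_0=1$ --- to force $\lambda=1$. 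The paper never needs positivity of all the $d_j$; that is harvested afterwards. For what it is worth, the rest of your proposal is sound: the twist relations in step two, the $\kappa$-cancelling balancing identity $X_{i^*j}=\theta_i^{-1}\theta_j^{-1}\sum_k N_{ij}^k\theta_k X_{k0}$ in step three (the scalar indeed drops out, via the relation and its inverse), and your treatment of (ii) and (iii) coincide with or correctly reduce to the paper's arguments. But until the $0$-columns are matched --- and the only available mechanism for that is Huang's trace identity or an equivalent bridge between the categorical $S$-matrix and the character $S$-matrix --- part (i), and hence the theorem, remains unproven on your route.
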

\begin{proof}
Let
$$\sigma_{M^iM^j}: M^i\boxtimes M^j\to M^j\boxtimes M^i$$
be the braiding of $\CC_V$. It is proved in \cite{H3} that the pivotal trace of
$\sigma_{M^{i^*}M^j}\sigma_{M^jM^{i^*}}$ on $ M^j\boxtimes M^{i^*}$  equals  $\frac{S_{ij}}{S_{00}}.$
This implies that $S=\lambda s$ where $\lambda=\frac{S_{00}}{s_{00}}.$  Using
the unitarity of $s$ and $S$, we conclude that $\lambda$ is a complex number  of norm $1.$ This
forces $\lambda=\pm 1$, which proves the first statement.

It follows from Theorem \ref{4.4} that the $T$-matrix of $\CC_V$ is given by $\tt=[\delta_{ij} \theta_i]_{i,j=0,..., p}$ and $\theta_j =
e^{2 \pi i \l_j}$. Therefore,  that genus one $T$-matrix of $V$
is given by $T = \tt \,e^{-2\pi ic/24}$, where $c$ is the central charge of $V$. In particular,
$\rho_V$ is a modular representation of $\CC_V$. The second part of the second statement is an immediate
consequence of Theorem II (i) and (ii).

 By (i), \eqref{eq:STrelations} and  Theorem \ref{2.4} we see that
  $$
  C=(ST)^3=\pm (s\tt e^{-2\pi ic/24})^3
 =\pm \frac{p^+}{D}e^{-6\pi ic/24} C\,,
  $$
  where $p^+$ is the Gauss sum of $\CC_V$. This implies that $\pm 1 = \frac{p^+}{D} e^{-\pi ic/4}$ or
  $\frac{p^+}{D}=\pm e^{\pi ic/4}.$ In particular, $\bc = c \pmod 4$.
\end{proof}

 {\rm \bf Theorem I} now follows from Theorem \ref{iden} immediately.

We next discuss two different definitions of dimension of modules of rational and $C_2$-cofinite vertex
operator algebras given in \cite{DJX} and \cite{BaKi}. As before we assume that $V$ is a vertex operator
algebra satisfying (V1) and (V2). Recall the following definition of quantum dimension from
\cite{DJX}.
Let  $M$  be a $V$-module. Set $Z_M(\tau)=ch_qM=Z_M(\bm1,\tau).$  The quantum dimension of $M$ over $V$ is
defined as
$$\qdim_{V}M=\lim_{y\rightarrow 0}\frac{Z_M(iy)}{Z_V(iy)}$$
where $y$ is real and positive.
It is shown in \cite{DJX} that
if $V$ is a vertex operator algebra satisfying (V1) and (V2) with the irreducibles  $M^i$ for
$i=0,..., p$ such that $\lambda_i>0$ for $i\ne 0,$ then
\begin{equation}\label{eq:qdim}
\qdim_{V} M^i=\frac{S_{i0}}{S_{00}}.
\end{equation}

On the other hand, because $V$ is a vertex operator algebra satisfying (V1) and (V2), the tensor
category $\CC_V$ of $V$-modules is  modular  by Theorem \ref{4.4}. The pivotal dimension $d_i=\dim M^i$ of
$M^i$ is also defined in the modular tensor category $\CC_V$. We now prove that
these two dimensions coincide.
\begin{prop}\label{iden dim}
Let $V$ be a vertex operator algebra satisfying (V1) and (V2), and $\lambda_i>0$ for $i\ne 0.$
Then for any irreducible
$V$-module $M^i$, $\dim M^i=\qdim_V M^i.$
\end{prop}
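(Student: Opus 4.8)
The plan is to show that the two notions of dimension agree by identifying both with the same ratio of $S$-matrix entries. The key observation is Theorem \ref{iden}(i), which asserts that the normalized $S$-matrix $s$ of the modular category $\CC_V$ coincides with the genus one $S$-matrix $S$ of $V$. Since $S = s = \frac{1}{D}\ts$, we have in particular the entrywise equality $S_{i0} = s_{i0} = \frac{1}{D}\ts_{i0}$ for all $i$, and likewise $S_{00} = \frac{1}{D}\ts_{00}$.

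First I would recall from equation \eqref{eq:qdim}, proved in \cite{DJX} under the hypothesis $\lambda_i > 0$ for $i \neq 0$, that
\begin{equation*}
\qdim_V M^i = \frac{S_{i0}}{S_{00}}\,.
\end{equation*}
Since $S = \frac{1}{D}\ts$, the common normalizing factor $\frac{1}{D}$ cancels in the ratio, giving
\begin{equation*}
\qdim_V M^i = \frac{\ts_{i0}}{\ts_{00}}\,.
\end{equation*}

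Next I would compute the same ratio on the categorical side. By the definition of the $S$-matrix of a modular category, $\ts_{i0} = \ptr(c_{V_0, V_{i^*}} \circ c_{V_{i^*}, V_0})$, where $0$ is the class of the unit object $\1$. Because the braidings with the unit object are trivial, the composite reduces to the identity on $V_{i^*}$, so $\ts_{i0} = \ptr(\id_{V_{i^*}}) = d_{i^*}$. By \cite[Lem. 2.8]{MuI} we have $d_{i^*} = d_i = \dim M^i$, and in particular $\ts_{00} = d_0 = \dim \1 = 1$. Therefore $\frac{\ts_{i0}}{\ts_{00}} = d_i = \dim M^i$. Combining this with the displayed formula for $\qdim_V M^i$ yields $\dim M^i = \qdim_V M^i$, as desired.

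The main step to verify carefully is the evaluation $\ts_{i0} = d_i$, which rests on the triviality of the half-braiding (or braiding) involving the unit object and on the identification of the $0$-column of the categorical $S$-matrix with the pivotal dimensions; everything else is a cancellation of normalizing constants. I do not anticipate a serious obstacle, since both formulas are standard once Theorem \ref{iden}(i) supplies the crucial bridge equating the analytically defined genus one $S$-matrix with the categorically defined normalized $S$-matrix. The hypothesis $\lambda_i > 0$ for $i \neq 0$ is needed only to invoke \eqref{eq:qdim} from \cite{DJX}.
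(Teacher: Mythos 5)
Your proof is correct and follows essentially the same route as the paper's: both identify $\qdim_V M^i = S_{i0}/S_{00}$ via \eqref{eq:qdim} and $\dim M^i = d_i = s_{0i}/s_{00}$, then use Theorem \ref{iden}(i) to equate the two ratios. The only difference is that you explicitly verify the standard fact $\ts_{i0} = d_i$ and $\ts_{00} = 1$ from the triviality of braidings with the unit object, whereas the paper simply cites $d_i = s_{0i}/s_{00}$ as known.
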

\begin{proof}
 Since $\dim M^i=d_i=\frac{{s}_{0i}}{s_{00}}$, the result follows
 from Theorem \ref{iden} and \eqref{eq:qdim} immediately.
 \end{proof}

The modular transformation property on the conformal blocks has been used extensively in the study of rational
vertex operator algebras. The modular transformation property gives an estimation of the growth conditions on
the dimensions
of homogeneous subspaces as the $q$-character of an irreducible module is a component of a vector valued modular
function \cite{KM}. The growth condition helps us to show that a rational and $C_2$-cofinite vertex operator
algebra with central charge less than one is an extension of the Virasoro vertex operator algebra associated to
the discrete series \cite{DZ} and to characterize vertex operator algebra $L(1/2,0)\otimes L(1/2,0)$ \cite{ZD},
\cite{DJ1}. The congruence subgroup property of the action of the modular group on the conformal block is
expected to play an important role in the classification of rational vertex operator algebras. Since the
$q$-character of an irreducible module is a modular function on a congruence subgroup and the sum of the square
norms of the $q$-characters of the irreducible modules is invariant under $SL_2(\Z)$, this gives a lot of
information on the dimensions of homogeneous
subspaces of vertex operator algebras. For example, one can use these properties to determine the possible
characters of the rational vertex operator algebras of central charge 1 \cite{Ki}. This will avoid some
difficult work in \cite{DJ2} and \cite{DJ3} of determining the dimensions of homogenous subspaces of small
weights when characterizing certain classes of rational vertex operator algebras of central charge one.

\section{Galois Symmetry of Modular Representations}\label{s:Gal1}
It was conjectured by Coste and Gannon that the representation of $\SL{}$ associated with a RCFT admits
a
Galois symmetry (cf. \cite[Conj. 3]{CG99} and \cite[6.1.7]{GanBook}). Under certain assumptions, the
Galois symmetry of
these representations of $\SL{}$ was established by Coste and Gannon \cite{CG99} and by Bantay
\cite{Bantay03}.

In this section, we will prove that such Galois symmetry holds for all modular representations
of a modular category as stated in Theorem II (iii) and (iv). It will follow from  Theorem \ref{iden} that this
Galois symmetry holds for the representation $\rho_V$ defined by modular transformation of the trace functions
of any VOA $V$ satisfying  (V1) and (V2).

The Galois symmetry for the canonical
modular
representation of the Drinfeld center of a spherical fusion category (Lemma \ref{l:double case}) plays a
crucial for the general case, and we will provide its proof in the next section.
\subsection{Galois action on a normalized $S$-matrix}
Let $\A$ be a  modular category over $\k$ with Frobenius-Schur exponent $N$, and $\rho$ a level $n$
modular
representation of $\A$.
By virtue of Theorem II (i) and (ii), $N \mid n \mid 12 N$ and $\rho(\SL{}) \le
\GLR{\Pi}{\BQ_n}$, where $\Pi_\A$ is simply abbreviated as $\Pi$.

For a fixed 6-th root $\zeta$ of the anomaly of $\A$, $\zeta$ determines the modular representation
$\rho^\zeta$  of $\A$ (cf. \eqref{eq:repC}). It follows from Section \ref{ss:SL2} that $\rho =
\rho_x^\zeta$
for some 12-th root of unity $x \in \k$. Let
$$
s = \rho(\fs) \quad \text{and}\quad t = \rho(\ft).
$$
Then
\begin{equation}\label{eq:normalization}
s = \frac{\zeta^3}{x^3 p_\A^+} \ts, \quad t = \frac{x}{\zeta} \tt\quad \in \GLR{\Pi}{\BQ_n}\,.
\end{equation}
Thus $s^2 = x^6 C = \pm C$,  where $C$ is the charge conjugation matrix
$[\delta_{ij^*}]_{i,j \in \Pi}$. Set $\sgns = x^6$.

Following \cite[App. B]{BG91}, \cite{CG} or \cite[App.]{ENO}, for each $\s \in \AQab$, there exists
a unique permutation, denoted by  $\hs$, on $\Pi$ such that
\begin{equation}\label{eq:galois1}
  \s\left(\frac{s_{ij}}{s_{0j}}\right) = \frac{s_{i\hs(j)}}{s_{0\hs(j)}}    \quad \text{for all }i, j\in
  \Pi.
\end{equation}
Moreover, there exists a function $\e_\s : \Pi \to \{\pm 1\}$ such that
\begin{equation}\label{eq:galois2}
 \s(s_{ij}) = \e_{\s}(i) s_{\hs(i) j} = \e_{\s}(j) s_{i \hs(j)} \quad \text{for all }i, j \in \Pi\,.
\end{equation}

Define $\gs \in \GLR{\Pi}{\BZ}$   by $(\gs)_{ij}= \e_{\s}(i) \delta_{\hs(i) j}$. Then
\eqref{eq:galois2} can be rewritten as
\begin{equation}\label{eq:galois3}
  \s(s)= \gs s = s \gs\inv
\end{equation}
where $(\s(y))_{ij}=\s(y_{ij})$ for $y \in \GLR{\Pi}{\BQ_n}$. Since $G_\s \in \GLR{\Pi}{\BZ}$,
this equation implies that the assignment,
$$
\AQab \to\GLR{\Pi}{\BZ}, \s \mapsto \gs
$$
defines a representation of the  group $\AQab$ (cf. \cite{CG}). Moreover,
\begin{equation} \label{eq:first conjuation}
  \s^2(s) = \gs s \gs\inv,
\end{equation}
\begin{equation} \label{eq:Gs formula}
\gs =  \s(s) s\inv = \s(s\inv) s\,.
\end{equation}
Note that the permutation $\hs$ on $\Pi$ depends only on the modular category $\A$ as
$\frac{s_{ij}}{s_{0j}} = \frac{\ts_{ij}}{\ts_{0j}}$ in \eqref{eq:galois1}. However, the matrix $\gs$ does
depend on $s$, and hence the representation $\rho$.

Suppose  $\tt=[\delta_{ij} \w_j]_{i, j\in \Pi}$.  Then $t=\frac{x}{\zeta} \tt$ is a diagonal matrix of order
$n$. If $\s|_{\BQ_n} = \s_a$ for some integer $a$ relatively prime to $n$, then
 $$
 \s(t) = \s_a(t)= t^a\,.
 $$
By virtue of \eqref{eq:first conjuation}, to prove Theorem II (iii), it suffices to
show that
\begin{equation}\label{eq:CG-conjecture}
 \s^2(t) = \gs t \gs\inv.
\end{equation}
We first establish the following simple observation.
\begin{lem}\label{l:iha1}
   For any integers $a, b$  such that $ab \equiv 1 \mod n$, we have
  $$
    s^2 = ( t^a  s  t^b  s  t^a)^2\,.
  $$
\end{lem}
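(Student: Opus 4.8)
The plan is to prove the identity $s^2 = (t^a s t^b s t^a)^2$ by realizing the word $t^a s t^b s t^a$ as an explicit element of $\SL{}$ and then using the relations in the modular group together with $s^2 = \pm C$ and the known commutation $Ct = tC$ (equivalently $Cs = sC$ up to the duality structure). First I would translate everything into $\SL{}$: since $\rho$ is a genuine representation with $\rho(\fs)=s$ and $\rho(\ft)=t$, it suffices to exhibit a relation among the matrices $\fs$ and $\ft^k$, or rather to compute the product $\ft^a \fs \ft^b \fs \ft^a$ directly as a $2\times 2$ integer matrix. The matrix $\ft^k = \mtx{1 & k\\ 0 & 1}$ and $\fs = \mtx{0 & -1\\ 1 & 0}$ are explicit, so the product $\fs \ft^b \fs$ equals $\mtx{-1 & 0 \\ b & -1}$, and then $\ft^a (\fs \ft^b \fs) \ft^a = \mtx{-1+ab & -a + a(ab-1)\\ b & ab-1}$, which modulo the condition $ab \equiv 1 \pmod n$ reduces cleanly.

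The key observation is that with $ab\equiv 1 \pmod n$ the matrix $g := \ft^a \fs \ft^b \fs \ft^a$, although not literally of finite order in $\SL{}$, has the property that $g^2$ lands in $\G(n)$-translate of $\fs^2$ up to the kernel. More precisely, the approach is to show $\rho(g)^2 = s^2$ by checking that $g^2$ and $\fs^2$ have the same image in $\SL{n}$, i.e. $g^2 \equiv \fs^2 \pmod n$, and then invoking that $\ker\rho \supseteq \G(n)$ is a congruence subgroup of level $n$ (Theorem II (i), established in Section \ref{s:tI i and ii}). Indeed $\rho$ factors through $\SL{n}$ on the relevant quotient, so any two elements of $\SL{}$ congruent modulo $n$ have the same image under $\rho$. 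Thus the whole identity collapses to the congruence $g^2 \equiv \fs^2 \pmod n$, which is a finite computation using $ab\equiv 1$.

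So the concrete steps are: (1) compute $g = \ft^a\fs\ft^b\fs\ft^a$ explicitly as an integer matrix; (2) compute $g^2$; (3) reduce $g^2$ modulo $n$ using $ab\equiv 1\pmod n$ and verify it equals $\fs^2 = \mtx{-1&0\\0&-1} = -\id$ modulo $n$; (4) conclude $\rho(g)^2 = \rho(\fs^2) = s^2$ because $\rho$ has congruence kernel of level $n$ and $g^2 \equiv \fs^2\pmod n$ means $g^2 (\fs^2)\inv \in \G(n) \le \ker\rho$. Since $\rho(g) = t^a s t^b s t^a$ by definition of the representation, this yields $s^2 = (t^a s t^b s t^a)^2$ as desired.

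The main obstacle I expect is step (3): verifying that $g^2 \equiv -\id \pmod n$ is not completely automatic, because $g$ itself is not $\equiv \pm\id$, and one must be careful that it is $g^2$, not $g$, that reduces correctly. The arithmetic must use $ab \equiv 1 \pmod n$ in the off-diagonal and diagonal entries simultaneously; a bookkeeping error in the entries of $g$ (particularly the top-right entry, where the $\ft^a$ on both sides interacts with the middle $\fs\ft^b\fs$ block) is the likeliest pitfall. A useful sanity check would be the special case $a=b=1$, where $g = \ft\fs\ft\fs\ft = (\fs\ft)\ldots$ should relate to the braid relation $(\fs\ft)^3 = \fs^2$, giving an independent confirmation that $g^2$ reduces to $\fs^2$. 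Once the congruence is confirmed, the rest is immediate from the congruence-kernel property.
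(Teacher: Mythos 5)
Your proposal is correct and follows essentially the same route as the paper: the paper's proof likewise verifies by direct computation that $(\ft^a \fs \ft^b \fs \ft^a)^2 \equiv \fs^2 \pmod n$ (via the intermediate congruence $\ft^a\fs\ft^b\fs\ft^a \equiv \mtx{0 & -a\\ b & 0} \pmod n$, exactly your reduction) and then concludes by the fact that $\rho$ factors through $\SL{n}$, i.e.\ $\G(n) \le \ker\rho$ from Theorem II (i). Your worry in step (3) resolves exactly as you anticipate, and the opening remarks about $s^2 = \pm C$ and $Cs = sC$ are unnecessary and unused in your actual argument.
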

\begin{proof}
  It follows from direct computation that
   $$
   \fs^2 \equiv \mtx{0 & -a \\ b & 0}^2\equiv (\ft^a \fs \ft^b \fs \ft^a)^2 \mod n\,.
   $$
   By Theorem II (i), $\rho$ factors through $\SL{n}$ and so we obtain the equality.
\end{proof}
\subsection{Galois symmetry of Drinfeld doubles} Before we return to prove the Galois symmetry for general
modular categories, we need to settle the special case, stated in the following lemma,  when $\A$ is the
Drinfeld center of a
spherical fusion category over $\k$, and $\rho$ is the canonical modular representation of $\A$.

\begin{lem}\label{l:double case}
Let $\CC$ be a spherical fusion category over $\k$, and $\s \in \AQab$. Suppose $\gs$ is the signed
permutation matrix determined by the canonical normalization $s=\frac{1}{\dim \CC} \ts$ of the
$S$-matrix of the center $Z(\CC)$, i.e. $\gs = \s(s)s\inv$. Then the $T$-matrix $\tt$ of $Z(\CC)$ satisfies
\begin{equation}\label{eq:T-symmetry}
\s^2(\tt) = \gs \tt \gs\inv.
\end{equation}
In particular, if $(\gs)_{ij}= \es(i) \delta_{\hs(i)j}$ for some sign function $\es$ and permutation $\hs$ on $\Pi_{Z(\CC)}$, then $\s^2(\tt_{ii}) = \tt_{\hs(i)\hs(i)}$ for all $i \in \Pi_{Z(\CC)}$.
Moreover, for any integers $a, b$ relatively prime to $N=\ord(\tt)$ such that $\s|_{\BQ_N} = \s_a$ and $ab
\equiv
1
\mod N$,
  $$
  \gs=\tt^a s \tt^b s \tt^a s\inv\,.
  $$
\end{lem}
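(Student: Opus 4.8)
The lemma has two assertions about the canonical modular representation $s = \frac{1}{\dim\CC}\ts$ of the center $Z(\CC)$: the $T$-symmetry \eqref{eq:T-symmetry} and the explicit formula $\gs = \tt^a s \tt^b s \tt^a s\inv$. My plan is to derive the explicit formula first and then read off the $T$-symmetry as a formal consequence, since \eqref{eq:first conjuation} already gives $\s^2(\tt) = \gs\,\tt\,\gs\inv$ once we know $\gs$ has the stated shape (indeed, for the canonical normalization of $Z(\CC)$ one has $\sgns = 1$, so $s^2 = C$ and the relation $\s^2(s)=\gs s\gs\inv$ is automatic). The real content, and the part I expect to consume the bulk of the work, is establishing the \emph{closed form} for $\gs$.

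First I would record the structural facts peculiar to $Z(\CC)$. By the discussion around \eqref{eq:can normalization}, the center is anomaly-free, $p_{Z(\CC)}^+ = \dim\CC = p_{Z(\CC)}^-$, so $\zeta = 1$ and $x = 1$ in the normalization \eqref{eq:normalization}; hence $\sgns = x^6 = 1$ and $s^2 = C$. Thus $s$ and $\tt$ satisfy the clean relations $(s\tt)^3 = s^2 = C$, $s^4 = \id$, and $s$, $\tt$ genuinely represent $\fs$, $\ft$ in an \emph{ordinary} (not merely projective) representation of $\SL{}$ that factors through $\SL{N}$, since $\ker\rho_{Z(\CC)}$ is a congruence subgroup of level $N$ by \cite[Thm.~6.7]{NS4}. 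This last point is what lets me pass freely between matrix identities and congruences of integer matrices modulo $N$.

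Next I would combine \eqref{eq:Gs formula}, which says $\gs = \s(s)s\inv$, with the Galois action on $s$. The key bridge is the observation (Lemma \ref{l:iha1}, whose proof is the matrix congruence $\fs^2 \equiv (\ft^a\fs\ft^b\fs\ft^a)^2 \bmod n$) that the word $t^a s t^b s t^a$ in $\SL{N}$ squares to $s^2 = C$. For the center, the crucial extra input I would invoke is the generalized Frobenius–Schur indicator machinery promised in Section \ref{s:Quantum_double}: the Galois conjugate $\s(s) = \s_a(s)$ of the canonical $S$-matrix of $Z(\CC)$ can be expressed through the action of $\ft^a$ on objects, because the entries $s_{ij}$ of the double are themselves (normalized) indicators and their $\s_a$-images correspond to reindexing by the Galois-twisted half-braiding — this is precisely the content that makes the double case tractable while the general case is not. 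Concretely, I expect an identity of the form $\s(s) = \tt^a s \tt^b s \tt^a$ (a matrix whose square is $\s(s^2) = \s(C) = C$, consistent with Lemma \ref{l:iha1}), from which $\gs = \s(s)s\inv = \tt^a s \tt^b s \tt^a s\inv$ follows immediately.

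The main obstacle, then, is verifying that identity $\s(s) = \tt^a s\tt^b s\tt^a$ at the level of indicators: one must show that applying $\s_a$ entrywise to the canonical $S$-matrix of the center equals the matrix product $\tt^a s \tt^b s \tt^a$, and this requires the explicit Frobenius–Schur indicator formula for the entries of $s$ and $\tt$ of a Drinfeld double together with the compatibility of $\s_a$ with raising $\tt$ to the $a$-th power. I would defer the detailed indicator computation to Section \ref{s:Quantum_double} as the excerpt announces, and here reduce everything to that single equality. Once it is in hand, \eqref{eq:T-symmetry} follows because $\gs = \s(s)s\inv$ is a \emph{signed permutation matrix} (its shape $(\gs)_{ij} = \e_\s(i)\delta_{\hs(i)j}$ comes from \eqref{eq:galois2}–\eqref{eq:galois3}), so conjugating the diagonal matrix $\tt$ by $\gs$ permutes and sign-adjusts the diagonal entries in exactly the way that reproduces $\s^2(\tt)$ via \eqref{eq:first conjuation} applied with $s^2 = C$ commuting with $\tt$.
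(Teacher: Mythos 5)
There is a genuine gap: your proposal never proves anything, because its central step --- the identity $\s(s)=\tt^a s\,\tt^b s\,\tt^a$ --- is exactly the content of the lemma, and you ``defer the detailed indicator computation to Section \ref{s:Quantum_double}.'' But Section \ref{s:Quantum_double} \emph{is} the proof of this lemma; there is no further result there to defer to. So the proposal reduces the lemma to an equivalent unproved statement and stops. What the paper actually supplies at this point, and what is missing from your outline, is the indicator machinery itself: (1) the equivariance $I_V((m,l)\fg,z)=I_V((m,l),\fg^J z)$ of the functionals built from $\nu^{\bX}_{m,l}(V)$ (Theorem \ref{t:1}(iii)); (2) Lemma \ref{l:2}, i.e.\ $I_V((a,l),z)=I_V((1,0),\ft^{-al}\ff_\s z)$, proved by exploiting that the operators $E^{(m,1)}_{\bX,V}$ have root-of-unity eigenvalues so that $\s_a$ acts on their traces by raising eigenvalues to the $a$-th power; (3) a Dirichlet prime $q\equiv b \bmod N$ to realize the inverse residue; and (4) the non-vanishing $\nu_{1,0}^{\bX_j}(X_j)=\dim_\k\CC(X_j,X_j)\ge 1$, which lets one cancel the indicator and extract the scalar relation $\w_{\hs(j)}=\w_j^{a^2}$. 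Note also that the paper's logical order is the reverse of yours: it proves the $T$-symmetry \eqref{eq:T-symmetry} first by this route, and only then obtains the closed formula by applying $\s$ to $\tt s\tt s\tt=s$ and using $\s(\tt)=\tt^a$ together with $\gs\inv\tt^{a^2}\gs=\tt$.

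Your reversed order (formula first, then symmetry) could in principle be made to work, but not by the argument you give. Your final step invokes \eqref{eq:first conjuation}, which concerns $s$, not $\tt$; knowing that $\gs\tt\gs\inv$ is a signed-permutation conjugate of the diagonal matrix $\tt$ says nothing about its entries being $\s^2$ of the entries of $\tt$ --- that is precisely the claim to be proved, so as written this step is circular. The correct deduction would instead use the congruence property you quoted: since $\rho_{Z(\CC)}$ factors through $\SL{N}$ and $\fd_a=\ft^a\fs\ft^b\fs\ft^a\fs\inv$ satisfies $\fd_a\ft\fd_a\inv\equiv\ft^{a^2} \bmod N$, the formula $\gs=\tt^a s\tt^b s\tt^a s\inv=\rho_{Z(\CC)}(\fd_a)$ would force $\gs\tt\gs\inv=\tt^{a^2}=\s^2(\tt)$. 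That repair is easy; the irreparable defect is that the formula itself (equivalently the symmetry) is never established, and establishing it is the entire point of the lemma.
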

The proof of this lemma, which requires the machinery of generalized Frobenius-Schur indicators, will be
developed independently in Section \ref{s:Quantum_double}.
\subsection{Galois symmetry of general modular categories} Let $c$ be the braiding of
the modular category $\A$. Without loss of generality, we further assume the underlying pivotal category of
$\A$ is \emph{strict}.
We set
$$
\s_{X \o Y}(V) = (c_{X,V} \o Y)\circ (X \o c\inv_{V,Y})
$$
for any $X, Y, V \in \A$. Then $(X \o Y, \s_{X \o Y})$ is a simple object of $Z(\A)$ if $X, Y$ are
simple
objects of $\A$. Moreover, if $V_i$ denotes a representative of $i \in \Pi$, then
$$
\{(V_i \o V_j, \s_{V_i \o V_j})\mid i, j \in \Pi\}
$$
forms a complete set of representatives of simple objects in $Z(\A)$ (cf. \cite[Sect. 7]{MugerII03}). Let $(i,j) \in \Pi \times
\Pi$ denote the
isomorphism class of $(V_i \o V_j, \s_{V_i \o V_j})$ in $Z(\A)$. Then we have $\Pi_{Z(\A)} = \Pi
\times
\Pi$ and the isomorphism class of the unit object of $Z(\A)$ is $(0,0) \in \Pi_{Z(\A)}$.

Let  $\ts$ and $\tt=[\delta_{ij} \w_i]_{i,j \in \Pi}$ be the $S$ and $T$-matrices
of
$\A$  respectively. Then  the $S$ and $T$-matrices of the center $Z(\A)$, denoted by $\BS$ and $\BT$
respectively, are indexed by $\Pi \times \Pi$. By \cite[Sect. 6]{NS4},
  $$
  \BS_{ij, kl} = \ts_{ik}\ts_{j l^*} ,\quad \BT_{ij,kl}= \delta_{ik}\delta_{jl} \frac{\w_i}{\w_j}\,.
  $$
  Thus  $\FSexp(\A) = \ord(\BT) = \ord(\tt)=N$.
\begin{proof}[Proof of Theorem \emph{II (iii)} and \emph{(iv)}]
 The canonical normalization $\bs$ of $\BS$ is
  $$
  \bs_{ij, kl} =  \frac{1}{\dim \A} \ts_{ik} \ts_{j l^*} = \sgns s_{ik} s_{j l^*},
  $$
  where $\sgns = \pm 1$ is given by $s^2 = \sgns C$ (cf. \eqref{eq:normalization}). Moreover, $\bs \in
  \GLR{\Pi\times
  \Pi}{\BQ_N}$.

  For $\s \in \AQab$, we have
  $$
  \s(\bs_{ij, kl}) = \sgns \e_\s(i)\e_\s(j) s_{\hs(i) k} s_{\hs(j) l^*}  =\e_\s(i)\e_\s(j) \bs_{
  \hs(i)
  \hs(j), kl} = \be_\s(i,j) \bs_{\bhs(i,j), kl}\,,
  $$
  where $\be_\s$ and $\bhs$ are respectively the associated sign function and permutation on $\Pi
  \times \Pi$.
  Thus,
  $$
  \be_\s(i,j)=\e_\s(i)\e_\s(j), \quad \bhs(i,j)=(\hs(i),\hs(j))
  $$
  and so
  $$
  (\boldsymbol G_\s)_{ij, kl} = \e_\s(i)\e_\s(j) \delta_{\hs(i)k}\delta_{\hs(j)l}
  $$
  where $\boldsymbol G_\s$ is the associated signed permutation matrix of $\s$ on $\bs$.
  By Lemma \ref{l:double case}, we find
  $$
  \s^2\left(\frac{\w_i}{\w_j}\right)= \s^2(\BT_{ij, ij}) = \BT_{\bhs(i,j), \bhs(i,j)} =
  \BT_{\hs(i)\hs(j), \hs(i) \hs(j)} =   \frac{\w_{\hs(i)}}{\w_{\hs(j)}}
  $$
  for all $i, j \in \Pi$. Since $\w_0 =1$,
  $$
  \frac{\w_{\hs(i)}}{\s^2(\w_i)}=\frac{\w_{\hs(0)}}{\s^2(\w_0)} = \w_{\hs(0)}
  $$
   for all $i \in \Pi$. By \eqref{eq:normalization},  $t = \tilde\zeta\inv \tt$ where $\tilde\zeta =
   \zeta/x
   $. Then,
  \begin{equation}\label{eq:form 1}
    t_{\hs(i)\hs(i)} =\frac{\w_{\hs(i)}}{\tilde\zeta} =\frac{\s^2(\w_i)
    \w_{\hs(0)}}{\tilde\zeta}=\s^2(t_{ii})
  \b
  \end{equation}
  for all $i \in \Pi$, where  $\b= t_{\hs(0)\hs(0)}\cdot \s^2(\tilde\zeta) \in \k^\times$. Suppose
  $\s|_{\BQ_n}=\s_a$
  for some integer $a$ relatively prime to $n$.  Then \eqref{eq:form 1} is equivalent to the equalities
  \begin{equation}\label{eq:diff forms}
   \gs t \gs\inv =\b t^{a^2} \quad\text{or} \quad \gs\inv t^{a^2} \gs =\b\inv t\,.
  \end{equation}
  Now, it suffices to show that $\b=1$.

  Apply $\s^2$ to the equation $(s\inv t)^3 = \id$. It follows from \eqref{eq:diff forms} that
   $$
     \id =  \gs s\inv \gs\inv t^{a^2} \gs s\inv \gs\inv t^{a^2}   \gs s\inv \gs\inv t^{a^2} =
          \b^{-2} (\gs s\inv t s\inv t s\inv \gs\inv t^{a^2})\,.
   $$
   This implies
   $$
   \id =  \b^{-2} (s\inv t s\inv t s\inv \gs\inv t^{a^2} \gs) = \b^{-3} (s\inv t s\inv t s\inv
   t)=\b^{-3}
   \id.
   $$
   Therefore, $\b^3=1$.

   Apply $\s\inv$ to the equality $sts=t\inv s t\inv$. Since $\s\inv|_{\BQ_n} = \s_b$ where $b$ is an
   inverse of $a$ modulo $n$, we have
   $$
   \gs\inv s  t^b s \gs = t^{-b} s \gs  t^{-b} \quad\text{or}\quad
    s  t^b s  = \gs t^{-b} s \gs  t^{-b} \gs\inv\,.
   $$
   This implies
   \begin{multline*}
    \gs\inv t^{a} s  t^b  s  t^{a} \gs = \gs\inv t^{a} \gs t^{-b} s\gs  t^{-b} \gs\inv t^{a} \gs \\ =
    \s\inv(\gs\inv t^{a^2} \gs) t^{-b} s \gs  t^{-b} \s\inv(\gs\inv t^{a^2} \gs) \\
    = \s\inv(\b\inv) t^b t^{-b} s \gs t^{-b}  \s\inv(\b\inv) t^b=\s\inv(\b^{-2}) s \gs\,.
   \end{multline*}
   Therefore,
  \begin{equation} \label{eq:zgs}
   t^{a} s  t^b  s  t^{a} =  \s\inv(\b^{-2}) \gs s  \,.
  \end{equation}
   Note that
   $$
   (\gs s)^2 = \gs s \gs s = s \gs\inv \gs s = s^2\,.
   $$
   Square both sides of \eqref{eq:zgs} and apply Lemma \ref{l:iha1}. We obtain
   $$
     s^2 =  \s\inv(\b^{-4}) s^2\,.
   $$
   Consequently, $\s\inv(\b^{-4})=1$ and this is equivalent to $\b^4=1$. Now, we can conclude that
   $\b=1$
   and so
   $$
   \gs t \gs\inv = t^{a^2} \,.
   $$
   By \eqref{eq:zgs}, we also have $\gs= t^a s t^{b} s t^a s\inv$.
\end{proof}
\begin{remark}
  \rm{For the case $\A = \Rep(D(H))$, where $H$ is a semisimple Hopf algebra, the $T$-matrix $\tilde t$ of $\A$
  was proven to satisfy $\s^2(\tilde t_{ii}) = \tilde t_{\hs(i)\hs(i)}$ in \cite[Prop. 12.1]{SZh}. The underlying
  modular representation of $\A$, in the context of Theorem II (iii) and (iv), is the canonical modular
  representation of $\A$ described in Section \ref{ss:center}.}
\end{remark}
We can now establish the Galois symmetry of RCFT as a corollary.
\begin{cor}
  Let $V$ be a vertex operator algebra satisfying (V1) and (V2) with simple $V$-modules $M^0, \dots,
  M^p$. Then the genus one $S$ and $T$ matrices of $V$ admit the Galois symmetry: For $\s \in \AQab$,  there
  exists a signed permutation matrix $\gs \in \GLC{p+1}$ such that
  $$
  \s(S) = \gs S = S \gs\quad \text{and}\quad \s^2(T) = \gs T \gs\inv
  $$
  where the associated permutation $\hs \in S_{p+1}$ of $\gs$ is determined by
  $$
  \s\left(\frac{S_{ij}}{S_{0j}}\right) = \frac{S_{i\hs(j)}}{S_{0\hs(j)}}\quad \text{for all }i,j=0,\dots,p.
  $$
  In particular, $\s^2(T_{ii}) = T_{\hs(i)\hs(i)}$.
  If $n=\ord(T)$ and $\s|_{\BQ_n} = \s_a$ for some integer $a$ relatively prime to $n$, then
  $$
  \gs = T^a S T^b S T^a S\inv
  $$
  where $b$ is an inverse of $a$ modulo $n$.
\end{cor}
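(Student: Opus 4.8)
The plan is to obtain the entire statement by specializing Theorem~II to the modular category $\CC_V$, using Theorem~\ref{iden} as the dictionary between the genus one modular data of $V$ and the modular data of $\CC_V$. First I would invoke Theorem~\ref{iden}: part~(i) identifies the genus one $S$-matrix of $V$ with the normalized $S$-matrix $s=\frac1D\ts$ of $\CC_V$, while part~(ii) asserts that $\rho_V$ is a modular representation of $\CC_V$ with $\rho_V(\fs)=S$ and $\rho_V(\ft)=T$, of level $n=\ord(T)$, and that $\rho_V$ is $\BQ_n$-rational. Hence all hypotheses of Theorem~II are met with $\A=\CC_V$, $\Pi=\{0,\dots,p\}$, $s=S$, and $t=T$.

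Next I would pass from the full group $\AQab$ in which the corollary is phrased to the finite group $\GalQ{n}$ on which Theorem~II~(iii) and~(iv) operate. Because $\rho_V$ is $\BQ_n$-rational, all entries of $S$ and $T$ lie in $\BQ_n$, so the entrywise action of any $\s\in\AQab$ on $S$ and $T$ depends only on the restriction $\s|_{\BQ_n}$; writing $\s|_{\BQ_n}=\s_a$ for an integer $a$ prime to $n$, I may replace $\s$ by $\s_a$ throughout. Applying Theorem~II~(iii) then gives that $\gs=\s(S)S\inv$ is a signed permutation matrix (necessarily in $\GLC{p+1}$) and that $\s^2(\rho_V(\fg))=\gs\,\rho_V(\fg)\,\gs\inv$ for every $\fg\in\SL{}$; taking $\fg=\ft$ yields $\s^2(T)=\gs T\gs\inv$. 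The explicit expression $\gs=T^aST^bST^aS\inv$, with $b$ an inverse of $a$ modulo $n$, is then exactly the formula $G_{\s_a}=t^a s t^b s t^a s\inv$ of Theorem~II~(iv) under the substitution $s=S$, $t=T$.

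It remains to record the two-sided $S$-relation and the permutation $\hs$. The equality $\s(S)=\gs S$ is immediate from the definition $\gs=\s(S)S\inv$, and the companion equality $\s(S)=S\gs$ is the right-hand half of the two-sided Galois action \eqref{eq:galois2}, recorded in \eqref{eq:galois3}, the symmetry $S=S^{T}$ of Theorem~\ref{2.4}~(i) being what guarantees that a single signed permutation acts on both sides. Finally $\hs\in S_{p+1}$ is precisely the permutation of \eqref{eq:galois1}, since the ratios $S_{ij}/S_{0j}=\ts_{ij}/\ts_{0j}$ are unaffected by the scalar normalization relating $S$ to $\ts$. I do not expect a genuine obstacle here: the substantive content is already carried by Theorem~II and Theorem~\ref{iden}, and the only points demanding care are the identification of $\rho_V(\fs)$ with the \emph{genus one} $S$-matrix (rather than merely the normalized $S$-matrix of $\CC_V$) and the reduction from $\AQab$ to $\GalQ{n}$ afforded by $\BQ_n$-rationality.
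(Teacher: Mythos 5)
Your route is the paper's route: the paper's entire proof of this corollary is the single sentence that it is an immediate consequence of Theorem \ref{iden} and Theorem II (iii) and (iv), and your proposal supplies exactly the details behind that sentence (the identification $S=s$ and $T=\rho_V(\ft)$ from Theorem \ref{iden}, the reduction from $\AQab$ to $\Gal(\BQ_n/\BQ)$ via $\BQ_n$-rationality, and then Theorem II (iii) and (iv) applied to $\rho_V$). Those steps are all correct.

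There is, however, one genuine flaw, at the point where you justify $\s(S)=S\gs$. Equation \eqref{eq:galois3} does not record that equality: it records $\s(s)=\gs s=s\gs\inv$, and appealing to the symmetry $S=S^T$ only reproduces the same statement, since $\s(S)=(\gs S)^T=S\gs^T=S\gs\inv$ (a signed permutation matrix satisfies $\gs^T=\gs\inv$). Thus what you can actually derive is $\s(S)=\gs S=S\gs\inv$; the version with $S\gs$ is equivalent to $\gs^2=\id$, which fails in general. Concretely, take $V=V_{A_4}$, the lattice VOA of the $A_4$ root lattice, which satisfies (V1) and (V2): then the simple modules are indexed by $\BZ_5$, $S_{jk}=\frac{1}{\sqrt{5}}\zeta_5^{jk}$, and any $\s\in\AQab$ with $\s(\zeta_5)=\zeta_5^{2}$ has $\hs(k)=2k \bmod 5$, a $4$-cycle, so $\gs\inv\neq\gs$ and hence $\gs S\neq S\gs$. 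So the displayed identity should read $\s(S)=\gs S=S\gs\inv$; this slip originates in the paper's own formulation (whose one-line proof cannot detect it), but your write-up presents \eqref{eq:galois3} as proving the stated version, which it does not. Everything else in your argument, including $\s^2(T)=\gs T\gs\inv$ and $\gs=T^aST^bST^aS\inv$, stands once this correction is made.
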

\begin{proof}
  The result is an immediate consequence of  Theorem \ref{iden} and Theorem II (iii) and (iv).
\end{proof}

\begin{remark}
  {\rm
  The modular representation $\rho$ factors through a representation $\rho_n : \SL{n} \to \GLk{\Pi}$. For
  any integers $a, b$ such that $ab \equiv 1 \mod n$, the matrix
  $$
  \fd_a = \mtx{a & 0\\ 0 & b} \equiv \ft^a \fs \ft^b \fs \ft^a \fs\inv \mod n
  $$
  is uniquely determined in $\SL{n}$ by the coset $a+n\BZ$ of $\BZ$. Moreover, the assignment $u: \GalQ{n} \to
  \SL{n}$, $\s_a \mapsto \fd_a$, defines a group monomorphism.
  Theorem II (iv) implies that the representation $\phi_\rho:\GalQ{n} \to \GLR{\Pi}{\BZ}, \s \mapsto \gs,$
  associated with $\rho$, also factors through $\rho_n$ and they satisfy the commutative diagram:
  $$
  \xymatrix{ \GalQ{n} \ar[r]^-{\phi_\rho} \ar[d]_-{u} & \GLk{\Pi} & \\
  \SL{n} \ar[ru]^-{\rho_n}& \SL{} \ar[l]^-{\pi_n} \ar[u]^-{\rho}\,.  &  }
  $$
  }
\end{remark}

The Galois symmetry enjoyed by the $T$-matrix of the Drinfeld center of a spherical fusion category
(Lemma
\ref{l:double case}) does not hold for a general modular category, as demonstrated in the following
example.
\begin{example}\label{ex:fail T sym}{\rm
  Consider the Fibonacci modular category $\A$ over $\BC$ which has only one isomorphism class of
  non-unit
  simple objects, and we abbreviate this non-unit class by $1$ (cf. \cite[5.3.2]{RSW}).  Thus,
  $\Pi_\A=\{0,1\}$. The $S$ and $T$-matrices are given by
  $$
 \ts=\mtx{1 & \varphi \\ \varphi & -1}, \quad \tt=\mtx{1 & 0 \\ 0 & e^{\frac{4\pi i}{5}}}\,.
  $$
  where $\varphi=\frac{1+\sqrt{5}}{2}$. The central charge $\bc=\frac{14}{5}$ and $\dim \A = 2+\varphi$.
  Therefore, $\a = e^{\frac{7\pi i}{5}}$ is the anomaly of $\A$ and $\zeta=e^{\frac{7 \pi i }{30}}$ is a
  6-th root of $\a$ (cf. \eqref{eq:charge}). Thus
  $$
  s=\rho^{\zeta}(\fs) = \frac{1}{\sqrt{2 +\varphi}} \ts,\quad t=\rho^{\zeta}(\ft)  = \mtx{e^{\frac{-7\pi i
  }{30}} & 0 \\ 0 & e^{\frac{17 \pi i}{30}}}
  $$
  and so $\rho^\zeta$ is a level 60 modular representation of $\A$ by Theorem II. In $\GalQ{60}$,
  $\s_{49}$
  is the unique non-trivial square. Since $\s_7(\sqrt{5}) = -\sqrt{5}$,
  $\s_7\left(\frac{\ts_{i0}}{\ts_{00}}\right) = \frac{\ts_{i1}}{\ts_{01}}$. Therefore, $\hs_7$ is the
  transposition
  $(0,1)$ on $\Pi_\A$, and
  $$
\s_7^2(t)=\s_{49}(t) = \mtx{e^{\frac{17\pi i }{30}} & 0 \\ 0 & e^{\frac{-7 \pi i}{30}}} = \mtx{t_{11} &
0
\\ 0 & t_{00}}\,.
  $$
  However, the Galois symmetry does not hold for $\tt$ as
  $$
  \s^2_7(\tt) = \mtx{1 & 0 \\ 0 & e^{\frac{6\pi i}{5}}} \ne \mtx{\tt_{11} & 0 \\ 0 & \tt_{00}}\,.
  $$
  }
\end{example}
We close this section with the following proposition which provides a necessary and sufficient condition
for such Galois symmetry of the $T$-matrix $\tt$ of a modular category.
\begin{prop}\label{p:anomaly}
   Suppose $\A$ is a modular category over $\k$ with  Frobenius-Schur exponent $N$ and $T$-matrix
   $\tt=[
   \delta_{ij}\w_i]_{i,j \in \Pi_\A}$, and let $\zeta \in \k$ be a $6$-th root of the anomaly $\a=\frac{p^+_\A}{p^-_\A}$ of $\A$.
   Then for any $\s \in \AQab$ and $i \in \Pi_\A$,
   \begin{equation}\label{eq:ratio1}
     \frac{\w_{\hs(i)}}{\s^2(\w_i)} = \w_{\hs(0)} = \frac{\zeta}{\s^2(\zeta)}\,.
   \end{equation}
   Moreover, the following statements are equivalent:
   \begin{enumerate}
     \item[(i)] $\w_{\hs(0)}=1$ for all $\s \in \AQab$.
     \item[(ii)] $\s^2(\w_i) = \w_{\hs(i)}$ for all $\s \in \AQab$.
     \item[(iii)] $\left(\frac{p^+_\A}{p^-_\A}\right)^4=1$.
  \end{enumerate}
\end{prop}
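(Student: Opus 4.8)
The plan is to run everything through the specific modular representation $\rho^\zeta$ attached to $\zeta$ by \eqref{eq:repC}, so that $t := \rho^\zeta(\ft) = \frac{1}{\zeta}\tt$ is the diagonal matrix with entries $t_{ii}=\w_i/\zeta$. Theorem II (iii) applies to every modular representation, in particular to $\rho^\zeta$, giving $\s^2(t) = \gs t \gs\inv$ for all $\s \in \AQab$, where $\gs$ is the signed permutation matrix with underlying permutation $\hs$ (which by the remark after \eqref{eq:galois2} is intrinsic to $\A$ and independent of the chosen normalization). Since $t$ is diagonal, conjugation by $\gs$ permutes its diagonal entries according to $\hs$ with the signs cancelling, so comparing $i$-th diagonal entries yields $\s^2(\w_i/\zeta)=\w_{\hs(i)}/\zeta$, i.e. $\frac{\w_{\hs(i)}}{\s^2(\w_i)}=\frac{\zeta}{\s^2(\zeta)}$ for every $i$. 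The right-hand side is visibly independent of $i$; specializing to $i=0$ and using $\w_0=1$ (so $\s^2(\w_0)=1$) gives $\w_{\hs(0)}=\zeta/\s^2(\zeta)$. Together these prove \eqref{eq:ratio1} in full.

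For the equivalences I would first dispose of (i) $\Leftrightarrow$ (ii), which is immediate from \eqref{eq:ratio1}: rewriting it as $\w_{\hs(i)}=\w_{\hs(0)}\,\s^2(\w_i)$ shows that $\s^2(\w_i)=\w_{\hs(i)}$ for all $i$ holds exactly when $\w_{\hs(0)}=1$, the forward implication being the case $i=0$. The substantive equivalence is (i) $\Leftrightarrow$ (iii). By \eqref{eq:ratio1}, condition (i) — that $\w_{\hs(0)}=1$ for all $\s \in \AQab$ — is equivalent to $\s^2(\zeta)=\zeta$ for all $\s \in \AQab$. Since the anomaly $\a$ is a root of unity and $\zeta^6=\a$, the element $\zeta$ is itself a root of unity lying in $\BQA$; write $k=\ord(\zeta)$. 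Because restriction is a homomorphism and the map $\AQab \to \GalQ{k}$ is surjective, the condition $\s^2(\zeta)=\zeta$ for all $\s$ reduces to the statement that every square in $\GalQ{k}\cong U(\BZ_k)$ fixes $\zeta$; as $\zeta$ has order $k$, only the identity of $\GalQ{k}$ fixes it, so this says every square in $U(\BZ_k)$ is trivial, i.e. $U(\BZ_k)$ has exponent dividing $2$.

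Finally I would invoke the elementary structure of $U(\BZ_k)$: by the Chinese Remainder Theorem its exponent divides $2$ precisely when $k \mid 24$ (the admissible $2$-part being $1,2,4,8$ and the odd part being $1$ or $3$). Hence (i) holds iff $\ord(\zeta)\mid 24$, iff $\zeta^{24}=1$; and since $\a^4=\zeta^{24}$, this is exactly condition (iii), $\a^4=1$. The main obstacle is less any single deep step than the bookkeeping: one must keep track that $\hs$ is the intrinsic permutation of $\A$, that $\zeta \in \BQA$ so that $\s^2(\zeta)$ is meaningful, and — at the crux — that passing between the profinite $\AQab$ and the finite $\GalQ{k}$ is legitimate via surjectivity of restriction, so that the ``for all $\s$'' quantifier becomes a statement purely about squares in $U(\BZ_k)$. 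The identification of exponent-$2$ unit groups with divisors of $24$ is the one place where a short self-contained computation is unavoidable.
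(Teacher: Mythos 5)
Your proof is correct and follows essentially the same route as the paper: apply Theorem II (iii) to the representation $\rho^\zeta$ of \eqref{eq:repC} to obtain $\s^2(\w_i/\zeta)=\w_{\hs(i)}/\zeta$, deduce \eqref{eq:ratio1} using $\w_0=1$, and reduce (i) $\Leftrightarrow$ (iii) to the fact that a root of unity is fixed by $\s^2$ for all $\s\in\AQab$ if and only if its order divides $24$. The only difference is that you prove this last number-theoretic fact inline, whereas the paper quotes it as Lemma \ref{app2} from its appendix.
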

\begin{proof} By \eqref{eq:repC},  the assignment
  $$
  \rho^{\zeta}(\fs)=s=\l\inv \ts, \quad  \rho^{\zeta}(\ft)=t=\zeta\inv \tt
   $$
   defines a modular representation of $\A$ where $\l = p_\A^+/\zeta^3$. For $\s \in \AQab$ and $i \in
   \Pi_\A$, Theorem II (iii) implies that
  $$
  \s^2\left(\frac{\w_i}{\zeta}\right) = \s^2(t_{ii}) = t_{\hs(i)\hs(i)} =  \frac{\w_{\hs(i)}}{\zeta}\,.
  $$
  Thus \eqref{eq:ratio1} follows as $\w_0 =1$.

  By \eqref{eq:ratio1}, the equivalence of (i) and (ii) is obvious. Statement (i) is equivalent to
  \begin{equation}\label{eq:zeta invariant1}
   \s^2(\zeta) =\zeta \quad\text{for all } \sigma \in \AQab\,.
  \end{equation}
  Since the anomaly $\a$ is a root of unity,  so is $\zeta$.  By Lemma
  \ref{app2}, \eqref{eq:zeta invariant1} holds if, and only if,  $\zeta^{24} = 1$  or  $\a^4 =1$.
\end{proof}
\begin{remark}
{\rm
  For a modular category $\A$ over $\BC$, it follows from \eqref{eq:charge} that the anomaly of $\A$ being
  a
  fourth root of unity is equivalent to  its  central charge $\bc$ being an integer modulo $8$.
  }
\end{remark}
\section{Galois symmetry of quantum doubles} \label{s:Quantum_double}
In this section, we provide a proof for Lemma \ref{l:double case} which is a special case of Theorem II
(iii)
and (iv), but which is also crucial to the proof of the theorem. We will invoke the machinery of \emph{generalized
Frobenius-Schur indicators} for spherical
fusion categories introduced in \cite{NS4}.

\subsection{Generalized Frobenius-Schur indicators} \label{ss:GFS}
Frobenius-Schur  indicators for group representations have been recently generalized to the
representations of Hopf algebras \cite{LM00}, and quasi-Hopf algebras \cite{MN05,  Sch04, NS2}. A
version
of  the 2nd Frobenius-Schur indicator was introduced in conformal field theory \cite{Bantay00}, and some
categorical
versions were studied in \cite{FGSV99, FucSch:CTCBC}. All these different contexts of
indicators are specializations of the Frobenius-Schur indicators for pivotal categories introduced in
\cite{NS1}.

The most recent introduction of the equivariant Frobenius-Schur indicators for semisimple Hopf algebras
by \cite{SZh} has motivated the discovery of generalized Frobenius-Schur indicators for pivotal
categories \cite{NS4}. The specialization of these generalized Frobenius-Schur indicators to spherical
fusion categories carries a natural action of $\SL{}$. This modular group action has played a crucial role for
the congruence subgroup theorem \cite[Thm. 6.8]{NS4} of the projective representation of $\SL{}$
associated with a modular category. These indicators also admit a natural action of $\AQab$ which will
be
employed to prove the Galois symmetry of quantum doubles in this section. For the
purpose
of this paper, we will only provide relevant details of generalized Frobenius-Schur indicators for our
proof  to be presented here. The readers are referred to \cite{NS4} for more
details.

Let $\CC$ be a strict spherical fusion category over $\k$  with Frobenius-Schur exponent $N$.
For any pair $(m,l)$ of integers, $V \in \CC$ and $\bX = (X, \s_X) \in Z(\CC)$, there is a naturally
defined $\k$-linear operator $E_{\bX, V}^{(m,l)}$ on the finite-dimensional $\k$-space $\CC(X, V^m)$
(cf.
\cite[Sect. 2]{NS4}). Here, $V^0=\1$, $V^m$ is the $m$-fold tensor product of $V$ if $m>0$, and $V^m =
(V\du)^{-m}$ if $m<0$.
The $(m,l)$-th \emph{generalized Frobenius-Schur indicator} for $\bX \in Z(\CC)$ and $V \in \CC$ is
defined
as
\begin{equation}\label{eq:E0}
\nu_{m,l}^\bX(V):=\Tr\left(E_{\bX, V}^{(m,l)}\right)
\end{equation}
where $\Tr$ denotes the ordinary trace map. In particular, for $m > 0$ and $f \in \CC(X, V^m)$, $E_{\bX,
V}^{(m,1)}(f)$ is the following composition:
$$
X \xrightarrow{X\o \db_{V\du}} X \o V\du \o V \xrightarrow{\s_X(V\du)\o V}  V\du \o X \o V \xrightarrow{
V\du\o f \o V} V\du \o V^m \o V \xrightarrow{\ev_V \o V^m} V^m \,.
$$
It can be shown by graphical calculus that for $m, l \in \BZ$ with $m \ne 0$,
\begin{equation}\label{eq:E1}
E_{\bX, V}^{(m,l)} = \left(E_{\bX, V}^{(m,1)}\right)^l \quad \text{and}\quad \left(E_{\bX,
V}^{(m,1)}\right)^{m N}=\id
\end{equation}
(cf. \cite[Lem. 2.5 and 2.7]{NS4}). Hence, for $m \ne 0$, we have
\begin{equation}\label{eq:E2}
\nu_{m,l}^\bX(V)=\Tr\left(\left(E_{\bX, V}^{(m,1)}\right)^l\right)\,.
\end{equation}
Note that $\nu_{m,1}^\1(V)$ coincides with the Frobenius-Schur indicator $\nu_m(V)$ of $V \in \CC$
introduced in \cite{NS1}.

\subsection{Galois group action on generalized Frobenius-Schur indicators}
Let $\KK(Z(\CC))$ denote the Grothendieck ring of $Z(\CC)$ and $\KK_\k(Z(\CC)) = \KK(Z(\CC)) \o_\BZ \k$.
For any matrix $y \in  \GLk{\Pi}$, we define the linear operator $F(y)$ on $\KK_\k(Z(\CC))$ by
$$
F(y)(j) = \sum_{i \in \Pi} y_{ij} i \quad\text{for all }j\in \Pi,
$$
where $\Pi = \Pi_{Z(\CC)}$.
Then $F: \GLk{\Pi} \to \Aut_\k(\KK_\k(Z(\CC))$ is a group isomorphism. In particular, every representation
$\rho:G \to \GLk{\Pi}$ of a group $G$ can be considered as a $G$-action on $\KK_\k(Z(\CC))$ through $F$. More
precisely, for $g \in G$, we define
$$
g j = F(\rho(g))(j) \quad\text{for all }j\in \Pi.
$$

Let $\ts$ and $\tt$ be the $S$ and $T$-matrices of $Z(\CC)$. The $\SL{}$-action on $\KK_\k(Z(\CC))$ associated
with the canonical modular representation $\rho_{Z(\CC)}$ of $Z(\CC)$ is then given by
\begin{equation}\label{eq:action1}
\fs j = \sum_{i \in \Pi} s_{ij} i \quad \text{and}\quad  \ft j = \w_j j\,,
\end{equation}
where $\tt=[\delta_{ij} \w_j]_{i,j \in \Pi}$  and $s = \frac{1}{\dim \CC} \,\ts$ (cf. \eqref{eq:can
normalization}). Note that $s \in \GLR{\Pi}{\BQ_N}$ by Theorem II (ii), since $N=\ord(\tt)$.

Now we extend the generalized indicator $\nu_{m,l}^\bX (V)$ linearly to a
functional $I_V((m,l), -)$ on $\KK_{\k}({Z(\CC)})$ via the  basis $\Pi$. For $V \in
\CC$,
$(m,l) \in
\BZ^2$ and $z=\sum_{i \in \Pi} \a_i i \in \KK_{\k}({Z(\CC)})$ for some $\a_i \in \k$,  we define
$$
I_V((m,l),z) = \sum_{i \in \Pi} \a_i\nu_{m,l}^{\bX_i}(V)
$$
where $\bX_i$ denotes an arbitrary object in the isomorphism class $i$. The $\SL{}$-actions on $\BZ^2$
and
on $\KK_\k({Z(\CC)})$ are related by these functionals on $\KK_\k({Z(\CC)})$. We summarize  some results
on
these generalized indicators relevant to the proof of Lemma \ref{l:double case} in the following theorem
(cf. Section 5  of \cite{NS4}):
\begin{thm} \label{t:1} Let $\CC$ be  a spherical fusion category $\CC$ over $\k$
with
Frobenius-Schur
exponent $N$. Suppose $z \in \KK_{\k}({Z(\CC)})$, $\bX=(X, \s_X) \in Z(\CC)$, $V \in \CC$, $(m,l) \in \BZ^2$ and
$J=\mtx{1&0\\0&-1}$.
Then:
  \begin{enumerate}
    \item[(i)] $\nu_{m,l}^\bX(V) \in \BQ_N$.
    \item[(ii)] $\nu_{1,0}^\bX (V) = \dim_\k \CC(X, V)$.
    \item[(iii)]  $I_V((m,l)\fg, z)= I_V((m,l), \fg^J  z)$ for $\fg \in \SL{}$,  where
        $\fg^J=J \fg J$.
  \end{enumerate}
  In particular,  $\AQab$ acts on the generalized Frobenius-Schur indicators $\nu_{m,l}^\bX(V)$. \qed
\end{thm}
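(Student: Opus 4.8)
The plan is to prove the three assertions in the order (ii), (iii), (i): statement (ii) is a direct computation, (iii) carries the essential content, and (i) then drops out of (ii) and (iii) together with the already-established rationality of the canonical modular data of $Z(\CC)$.

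For (ii) I would simply unwind the definitions. When $m=1$ and $l=0$ we have $V^1=V$, and by \eqref{eq:E1} the operator $E_{\bX,V}^{(1,0)}=(E_{\bX,V}^{(1,1)})^0$ is the identity on $\CC(X,V)$, so $\nu_{1,0}^\bX(V)=\Tr(\id_{\CC(X,V)})=\dim_\k\CC(X,V)$. The same computation gives the enlarged identity $\nu_{m,0}^\bX(V)=\dim_\k\CC(X,V^m)$ for every $m\ge 0$, a nonnegative integer; this enlarged form is exactly what the proof of (i) will consume.

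Assertion (iii) is the heart of the matter, and I would reduce it to the two generators $\fs,\ft$ of $\SL{}$. The relation is multiplicative in $\fg$ --- if it holds for $\fg_1$ and $\fg_2$ then, using $\fg_1^J\fg_2^J=(\fg_1\fg_2)^J$, it holds for $\fg_1\fg_2$ --- so verifying it on $\fs$ and $\ft$ propagates it to all of $\SL{}$ via the presentation \eqref{eq:relation}. With the row-vector action one has $(m,l)\ft=(m,m+l)$ and $(m,l)\fs=(l,-m)$, while $\ft^J=\ft\inv$ and $\fs^J=\fs\inv$. Comparing coefficients against the basis $\Pi$, the $\ft$-case becomes a twist identity: advancing $l$ by one full period $m$ composes $E_{\bX,V}^{(m,\bullet)}$ with the full-rotation operator $(E_{\bX,V}^{(m,1)})^m$, which acts on $\CC(X,V^m)$ as the scalar given by the ribbon twist $\theta_{\bX_j}=\w_j\id$, matching the factor $\ft^J=\ft\inv$ in the action on $\KK_\k(Z(\CC))$. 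The $\fs$-case becomes the genuinely nontrivial identity that the $S$-matrix of $Z(\CC)$ transports the exponent pair $(m,l)$ to $(l,-m)$; this I would prove by graphical calculus in the strict spherical category, drawing the closed diagram that computes $\Tr(E_{\bX,V}^{(m,l)})$ on a template with two independent cycles, observing that interchanging the two cycles realizes the modular $\fs$-move, and evaluating the resulting diagram against the half-braidings of the simple objects of $Z(\CC)$ to recover the entries of $s$.

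Finally, for (i) I would run an orbit argument. The group $\SL{}$ acts on $\BZ^2$ with the $\gcd$ as a complete invariant on nonzero vectors, so any $(m,l)$ with $\gcd(m,l)=d$ equals $(d,0)\fg$ for some $\fg\in\SL{}$; applying (iii) with $z=j$ a single basis vector yields
$$
\nu_{m,l}^{\bX_j}(V)=I_V((d,0)\fg,\,j)=I_V\bigl((d,0),\,\fg^J j\bigr)=\sum_{k\in\Pi}\rho_{Z(\CC)}(\fg^J)_{kj}\,\nu_{d,0}^{\bX_k}(V).
$$
Each $\nu_{d,0}^{\bX_k}(V)$ is a nonnegative integer by the enlarged (ii), and each entry $\rho_{Z(\CC)}(\fg^J)_{kj}$ lies in $\BQ_N$ because $\rho_{Z(\CC)}(\fg^J)$ is a word in the canonical $s$ and $\tt$ of $Z(\CC)$, which have entries in $\BQ_N$ by Theorem II (ii); hence $\nu_{m,l}^{\bX_j}(V)\in\BQ_N$, and linearity in $z$ finishes (i). Note that \eqref{eq:E1} already shows the indicators are cyclotomic, a priori in $\BQ_{mN}$, so the real gain is the sharpening to $\BQ_N$. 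The main obstacle is clearly the $\fs$-part of (iii): unlike the $\ft$-part, which is a local twist computation, the $\fs$-move genuinely interchanges the two tensor directions recorded by $E_{\bX,V}^{(m,l)}$ and must be evaluated against the half-braidings of $Z(\CC)$, which is where the full graphical machinery of \cite{NS4} is needed. Once (i)--(iii) are in place, the closing assertion is immediate: each $\s\in\AQab$ restricts to an automorphism of $\BQ_N$ and therefore acts on the $\BQ_N$-valued indicators $\nu_{m,l}^\bX(V)$.
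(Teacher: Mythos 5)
Your proposal cannot be compared line-by-line with a proof in the paper, because the paper offers none: Theorem \ref{t:1} is imported wholesale from Section 5 of \cite{NS4} (its statement closes with a q.e.d.\ box and no proof follows), so what you would need to supply is exactly the technical core of that reference. Judged on its own terms, your part (ii) is complete; your reduction of (iii) to the generators $\fs,\ft$ is valid (multiplicativity in $\fg$, together with closure under inverses, which follows by applying the identity for $\fg$ to $(m,l)\fg^{-1}$ and $(\fg^J)^{-1}z$); and your orbit argument for (i) is a correct deduction from (ii), (iii) and Theorem II (ii), with no circularity inside this paper's architecture, since Theorem II (i) and (ii) are established in Section \ref{s:tI i and ii} from \cite{NS4} and the paper itself invokes Theorem II (ii) for $\rho_{Z(\CC)}$ in Section \ref{s:Quantum_double} in just this way. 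Two small omissions there: the orbit argument misses the point $(m,l)=(0,0)$, and your ``enlarged (ii)'' for $m=0$ is not covered by \eqref{eq:E1}, which requires $m\ne 0$.

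The genuine gap is the $\fs$-case of (iii), which is not a verification left to the reader but the entire mathematical content of the theorem, and your treatment of it is one sentence of intention rather than a proof. Written out against the basis $\Pi_{Z(\CC)}$, the claim is $\nu_{l,-m}^{\bX_j}(V)=\sum_{i\in\Pi_{Z(\CC)}}(s^{-1})_{ij}\,\nu_{m,l}^{\bX_i}(V)$: the exponent swap $(m,l)\mapsto(l,-m)$ must be implemented by the canonical $S$-matrix of $Z(\CC)$. The left-hand side is the trace of an operator built from the half-braiding of the \emph{single} object $\bX_j$ acting on $\CC(X_j,V^{l})$, whereas the right-hand side is a sum over \emph{all} simple objects of the center weighted by $s$-entries; ``interchanging the two cycles of the closed diagram'' cannot by itself produce such a sum. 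To make the simples of $Z(\CC)$ and the entries of $s$ appear, one must decompose identity morphisms in $Z(\CC)$, i.e.\ pass through the induction functor $\CC\to Z(\CC)$ and its adjunction properties --- precisely the several sections of graphical machinery in \cite{NS4} that your sketch explicitly defers to. Until that identity is proven, (iii) is unestablished, and with it your proof of (i). A secondary, citable gap: your $\ft$-case uses that the full rotation $(E_{\bX_j,V}^{(m,1)})^{m}$ acts on $\CC(X_j,V^m)$ as precomposition with $\theta_{\bX_j}^{-1}$, i.e.\ as the scalar $\w_j^{-1}$; this is true, but it is strictly stronger than the relation $(E_{\bX,V}^{(m,1)})^{mN}=\id$ quoted in \eqref{eq:E1}, so it must itself be proven or cited from \cite{NS4}.
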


For $\s \in \AQab$, $\gs =\s(s)s\inv$ is also given by
$$
(\gs)_{ij}=\e_\s(i) \delta_{\hs(i) j}
$$
for some sign function $\e_\s$ and permutation $\hs$ on $\Pi$ (cf. \eqref{eq:galois1},
\eqref{eq:galois2}
and \eqref{eq:galois3}). Define $\ff_\s = F(\gs)$.  Then
\begin{equation}\label{eq:action2}
\ff_\s  j = \e_\s(\hs\inv(j)) \hs\inv(j) \quad\text{for }j \in \Pi\,.
\end{equation}
Since the assignment $\AQab\to \GLR{\Pi}{\BZ},  \s \mapsto \gs$ is a representation of $\AQab$,
$$
\ff_\s \ff_\tau = \ff_{\s\tau} \quad\text{for all }\s, \tau \in \GalQ{N}.
$$
Therefore, by direct computation,
$$
\ff_{\s\inv} j = \ff_\s\inv j = \e_\s(j)\hs(j)\quad\text{for }j \in \Pi\,.
$$

\begin{remark} \label{r:equalf}
  Since $s \in \GLR{\Pi}{\BQ_N}$,  if  $\s, \s' \in \AQab$ such that $\s|_{\BQ_N} = \s'|_{\BQ_N}$, then
  $$
  G_\s = G_{\s'}\quad \text{and so}\quad \ff_\s = \ff_{\s'}\,.
  $$
\end{remark}

Now we can establish the following lemma which describes a relation between the $\AQab$-action on
$\KK_\k({Z(\CC)})$ and the $\SL{}$-action  in terms of the functionals $I_V((m,l),-)$.
\begin{lem} \label{l:2}
Let $V \in \CC$ and $a, l$ non-zero integers such that $a$ is relatively prime to $lN$. Suppose $\s \in
\AQab$ satisfies $\s|_{\BQ_N}=\s_a$. Then, for all  $z \in \KK_{\k}({Z(\CC)})$,
  $$
  I_V((a, l),  z)= I_V((1,0), \ft^{-al} \ff_\s z)\,.
  $$
\end{lem}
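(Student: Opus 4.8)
The plan is to reduce the claim to a check on a convenient $\k$-basis and then feed in a Galois formula for the generalized indicators. First I would clear the factor $\ft^{-al}$ on the right: applying Theorem~\ref{t:1}(iii) with $\fg=\ft^{al}$ (so that $(\ft^{al})^J=\ft^{-al}$ and $(1,0)\ft^{al}=(1,al)$) gives $I_V((1,0),\ft^{-al}\ff_\s z)=I_V((1,al),\ff_\s z)$, so it is enough to prove
$$
I_V((a,l),z)=I_V((1,al),\ff_\s z)\qquad\text{for all }z\in\KK_\k(Z(\CC)).
$$
Both sides are $\k$-linear in $z$ (the operator $\ff_\s=F(\gs)$ is $\k$-linear), and since the canonical $S$-matrix $s$ is invertible the vectors $\fs j=\sum_i s_{ij}\,i$, $j\in\Pi$, form a $\k$-basis of $\KK_\k(Z(\CC))$ by \eqref{eq:action1}. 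Hence I only need to verify the identity at $z=\fs j$.

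The key input I would establish is the Galois formula: for $\s\in\AQab$ with $\s|_{\BQ_N}=\s_a$ and any nonzero $m\in\BZ$ with $\gcd(a,mN)=1$,
$$
\s\!\left(\nu_{m,l}^\bX(V)\right)=\nu_{m,al}^\bX(V)\qquad(l\in\BZ).
$$
By \eqref{eq:E1} the operator $E_{\bX,V}^{(m,1)}$ has order dividing $|m|N$, so its eigenvalues $\mu_1,\dots,\mu_r$ are $|m|N$-th roots of unity and $\nu_{m,l}^\bX(V)=\sum_h\mu_h^l$. Because $\gcd(a,mN)=1$, the automorphism $\s_a$ extends to $\BQ_{|m|N}$ by $\mu_h\mapsto\mu_h^a$; as $\nu_{m,l}^\bX(V)\in\BQ_N$ by Theorem~\ref{t:1}(i), applying $\s$ agrees with this extension and produces $\sum_h\mu_h^{al}=\nu_{m,al}^\bX(V)$. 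This is exactly where the hypothesis $\gcd(a,lN)=1$ enters, namely through the case $m=-l$.

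To finish, fix $j\in\Pi$ and evaluate at $z=\fs j$. Using Theorem~\ref{t:1}(iii) once more (with $\fs=(\fs\inv)^J$, $(a,l)\fs\inv=(-l,a)$ and $(1,l)\fs\inv=(-l,1)$) I convert the $\fs$ on the argument into a shift of the index:
$$
I_V((a,l),\fs j)=\nu_{-l,a}^{\bX_j}(V),\qquad I_V((1,l),\fs j)=\nu_{-l,1}^{\bX_j}(V).
$$
On the other side, the defining Galois relation \eqref{eq:galois2}, $\s(s_{ij})=\e_\s(i)\,s_{\hs(i)j}$, shows that $\ff_\s(\fs j)=\sum_i\s(s_{ij})\,i$ is the entrywise $\s$-image of the column $\fs j$; combining this with the $m=1$ case of the Galois formula yields
$$
I_V((1,al),\ff_\s\fs j)=\sum_i\s(s_{ij})\,\nu_{1,al}^{\bX_i}(V)=\s\!\Big(\sum_i s_{ij}\,\nu_{1,l}^{\bX_i}(V)\Big)=\s\!\left(\nu_{-l,1}^{\bX_j}(V)\right).
$$
The Galois formula with $m=-l$ rewrites $\s(\nu_{-l,1}^{\bX_j}(V))$ as $\nu_{-l,a}^{\bX_j}(V)$, which matches $I_V((a,l),\fs j)$; by linearity the identity then holds for all $z$.

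The hard part is isolating and proving the Galois formula for the higher indicators and keeping three distinct actions consistent: the $\SL{}$-action on the index $(m,l)$ from Theorem~\ref{t:1}(iii), the signed-permutation operator $\ff_\s=F(\gs)$, and the entrywise action of $\s$ on the matrix $s$. The relation $\ff_\s(\fs j)=\sum_i\s(s_{ij})\,i$ coming from \eqref{eq:galois2} is precisely what bridges the object-permutation $\hs$ with the coefficientwise Galois action, and the coprimality $\gcd(a,lN)=1$ is what makes the eigenvalue computation legitimate for the index $m=-l$.
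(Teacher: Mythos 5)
Your proof is correct and takes essentially the same route as the paper's: the identical eigenvalue/trace argument (extending $\s_a$ to $\BQ_{|m|N}$) to get $\s\bigl(\nu_{m,l}^\bX(V)\bigr)=\nu_{m,al}^\bX(V)$, the same appeals to Theorem \ref{t:1}(iii), and the same use of \eqref{eq:galois2} to relate $\ff_\s$ to the entrywise Galois action on $s$. The only difference is bookkeeping — the paper verifies the identity on the basis $\{j\}$ using $\ff_{\s\inv}$ (via indicators $\nu_{l,-1},\nu_{l,-a}$) while you verify it on the basis $\{\fs j\}$ using $\ff_\s$ (via $\nu_{-l,1},\nu_{-l,a}$) — so this is the paper's argument in mildly reorganized form.
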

\begin{proof} Let $j \in \Pi$ and $\bX_j$ a representative of $j$.  By \eqref{eq:E1}, \eqref{eq:E2} and Theorem
\ref{t:1}(i), for any non-zero integer $m$, there is a linear operator $E_m
=E_{\bX_j, V}^{(m,1)}$ on a
finite-dimensional space such that $\left(E_m\right)^{mN}=\id$ and
$$
\nu_{m,k}^{\bX_j}(V)=\Tr(E_m^k) \in \BQ_N
$$
for all integers $k$. In particular, the eigenvalues of $E_m$ are $|mN|$-th roots of unity.

Suppose $\tau \in \AQab$ such that $\tau|_{\BQ_{|lN|}} = \sigma_a$. Then $\tau|_{\BQ_N} = \s_a =
\s|_{\BQ_N}$. Therefore,
\begin{equation}\label{eq:gal action1}
\s(\nu_{l,-1}^{\bX_j}(V)) = \tau(\Tr(E_l\inv))=\Tr(E_l^{-a})=\nu_{l,-a}^{\bX_j}(V) = I_V((l,-a), j)
\end{equation}
and
\begin{multline}\label{eq:gal action2}
\s(\nu_{1,l}^{\bX_j}(V)) = \s_a(\Tr(E_1^l))=\Tr(E_1^{la})=\nu_{1,la}^{\bX_j}(V)\\
 = I_V((1,la), j) = I_V((1,0) \ft^{la}, j) = I_V((1,0), \ft^{-la} j)\,.
\end{multline}
Here, the last equality follows from Theorem \ref{t:1}(iii).

On the other hand, by Theorem \ref{t:1}(iii), we have
$$
\nu_{1,l}^{\bX_j}(V)= I_V((1,l), j) = I_V((l,-1)\fs\inv, j)
=I_V((l,-1),\fs j)
=\sum_{i \in \Pi} s_{ij}\nu_{l,-1}^{\bX_i}(V).
$$
Therefore, \eqref{eq:gal action1} and Theorem \ref{t:1}(iii) imply
\begin{multline*}
\s(\nu_{1,l}^{\bX_j}(V))=\s \left( \sum_{i \in \Pi} s_{ij}\nu_{l,-1}^{\bX_i}(V)\right)
=\sum_{i \in \Pi} \e_{\s}(j) s_{i\hs(j)}\s(\nu_{l,-1}^{\bX_i}(V))\\
=\sum_{i\in \Pi} \e_{\s}(j) s_{i\hs(j)}I_V((l,-a), i)=
I_V((l, -a), \e_\s(j)\fs\, \hs(j)) \\
= I_V((l, -a), \fs (\ff_{\s\inv} j))
= I_V((l, -a)\fs\inv, \ff_{\s\inv} j) =
I_V((a, l), \ff_{\s\inv} j)\,.
\end{multline*}
It follows from \eqref{eq:gal action2} that for all $j \in \Pi$,
$$
I_V((a, l), \ff_{\s\inv} j)  = I_V((1,0), \ft^{-la} j)
$$
and so
$$
I_V((a, l), \ff_{\s\inv} z)  = I_V((1,0), \ft^{-la} z)
$$
for all $z \in \KK_\k(Z(\CC))$. The assertion follows by replacing $z$ with $\ff_\s z$.
\end{proof}
\begin{remark}
   {\rm Some related equalities for the representation categories of semisimple Hopf algebras were obtained in
   \cite[Cor 12.4]{SZh} with a similar strategy. Because of the conceptual differences of the definitions of
   generalized Frobenius-Schur indicators for  spherical fusion categories and the counterpart for semisimple
   Hopf algebras introduced in that paper, their approach generally cannot be adapted in fusion categories.}
\end{remark}
\subsection{Proof of Lemma \ref{l:double case}}
 Let $\s \in \AQab$ and $\s|_{\BQ_N} =  \s_a$ for
some integer $a$ relatively prime to $N$. Then $\s\inv|_{\BQ_N}=\s_b$ where $b$ is an inverse of $a$
modulo $N$. By Dirichlet's
theorem on primes in arithmetic progressions, there exists a prime $q$ such that $q \equiv b \mod N$ and $q \nmid a$.
By Lemma \ref{l:2}
and Theorem \ref{t:1}(iii), for $j \in \Pi$, we have
\begin{multline} \label{eq:nu11}
I_V((1,0), \ft\inv \ff_{\s} \ft^{q} \ff_{\s\inv} j)=
I_V((1,0), \ft^{-aq} \ff_{\s} \ft^{q} \ff_{\s\inv} j) \\
=I_V((a,q),  \ft^{q} \ff_{\s\inv} j)
=I_V((a,q)\ft^{-q},  \ff_{\s\inv} j)
=I_V((a,q-aq),  \ff_{\s\inv} j) \\
=  I_V((1,0), \ft^{-aq+a^2 q} \ff_\s \ff_{\s\inv} j)
=  I_V((1,0), \ft^{-1+a} j)
\,.
\end{multline}
Using \eqref{eq:action1} and \eqref{eq:action2}, we can compute directly the two sides of
\eqref{eq:nu11}.
 This implies
$$
\w_j\inv \w^{q}_{\hs(j)} \nu_{1,0}^{\bX_j} (V)= \w_j^{a-1}\nu_{1,0}^{\bX_j} (V)
$$
for all $V \in \CC$. Take $V=X_j$, the underlying $\CC$-object of $\bX_j$. We then have
$\nu_{1,0}^{\bX_j}(X_j)=\dim_\k \CC(X_j, X_j) \ge 1$.
Therefore,
we have $\w_j\inv \w^{q}_{\hs(j)} = \w_j^{a-1}$, and hence
$$
\w^{q}_{\hs(j)} = \w_j^{a}\quad\text{or}\quad \w_{\hs(j)}  =\w_j^{a^2}\,.
$$
This is equivalent to the equality
$$
\s^2(\tt) =\gs  \tt \gs\inv\,.
$$

Since $\tt s\tt s\tt=s$, we find
  \begin{multline}
    \gs s=\s(s) = \s(\tt s\tt s\tt)=\tt^a s\gs\inv \tt^a \gs s \tt^a \\ =
    \tt^a s\gs\inv \tt^{a^2b} \gs s \tt^a =\tt^a s (\gs\inv \tt^{a^2} \gs)^b s \tt^a = \tt^a s \tt^b s
    \tt^a\,.
  \end{multline}
  Therefore,
  $$
  \gs=\tt^a s \tt^b s \tt^a s\inv\,.
  $$
  This completes the proof of Lemma \ref{l:double case}. \qed

\section{Anomaly of modular categories }\label{s:anomaly}
In this section, we apply the congruence property and Galois symmetry of a modular category (Theorem II)
to
deduce some arithmetic relations among the global dimension, the Frobenius-Schur exponent and the order
of the  anomaly.

Let $\A$ be a modular category over $\k$ with Frobenius-Schur exponent $N$. Since $d(V) \in \BQ_N$ for $V \in
\A$ (cf. \cite[Prop. 5.7]{NS4}), the anomaly $\a=\frac{p_\A^+}{p_\A^-}$ of $\A$ is a root of unity in
$\BQ_N$. Therefore, $\a^{N}=1$ if $N$ is even, and
$\a^{2N}=1$ if $N$ is odd.

Let us define $J_\A = (-1)^{1+\ord \a}$ to record the parity of the order of the anomaly $\a$ of $\A$. Note
that $J_\A$ is intrinsically defined by $\A$.
It
will become clear that $J_\A$ is closely related to the Jacobi symbol $\Jac{*}{*}$ in number theory.
When
$4 \nmid N$,  $J_\A$ determines whether $\dim \A$ has a square root in $\BQ_N$.

\begin{thm}\label{t:Jacobi}
  Let $\A$ be a modular category over $\k$ with  Frobenius-Schur exponent $N$ such that $4 \nmid N$.
  Then
  $J_\A \dim \A$ has a square root in $\BQ_N$ and $-J_\A \dim \A$ does not have any square root
  in
  $\BQ_N$.
\end{thm}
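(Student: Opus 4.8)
The plan is to exploit the relation $(p_\A^+)^2 = \a \dim\A$ linking the positive Gauss sum, the anomaly and the global dimension. Since $\a = p_\A^+/p_\A^-$ and $p_\A^+ p_\A^- = \dim\A$ by \eqref{eq:gausssum}, multiplying these two identities gives $(p_\A^+)^2 = \a\dim\A$. The crucial observation is that $p_\A^+ = \sum_{i} d_i^2 \w_i$ already lies in $\BQ_N$: each $d_i \in \BQ_N$, and each $\w_i$ is an $N$-th root of unity, hence in $\BQ_N$. Thus $\a\dim\A$ has a square root, namely $p_\A^+$, inside $\BQ_N$. Everything then reduces to showing that $J_\A\dim\A$ differs from $\a\dim\A$ by a square in $\BQ_N^\times$, that is, that $J_\A\a^{-1}$ is a square of a root of unity in $\BQ_N$.

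First I would prove a purely arithmetic lemma: under the hypothesis $4 \nmid N$, for any root of unity $\xi \in \BQ_N$ of order $d$ the element $(-1)^{1+d}\xi$ is a square in $\BQ_N^\times$. The group of roots of unity in $\BQ_N$ is cyclic of order $N$ when $N$ is even and $2N$ when $N$ is odd; since $4\nmid N$, this order is $\equiv 2 \pmod 4$, and consequently $4 \nmid d$ for every such $\xi$. If $d$ is odd, then $(-1)^{1+d}\xi = \xi$ lies in the cyclic group $\mu_d$ of odd order, on which squaring is bijective, so $\xi$ is a square. If $d \equiv 2 \pmod 4$, then $-1 = \xi^{d/2}$ and $(-1)^{1+d}\xi = -\xi = \xi^{d/2+1}$; here $d/2+1$ is even, so $\xi^{d/2+1} = (\xi^{(d/2+1)/2})^2$ is again a square.

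Applying this lemma with $\xi = \a^{-1}$, whose order equals $d = \ord(\a)$ and for which $(-1)^{1+d} = J_\A$, yields $J_\A\a^{-1} = \gamma^2$ for some $\gamma \in \BQ_N$. Hence
$$
J_\A\dim\A = (J_\A\a^{-1})\,(p_\A^+)^2 = (\gamma p_\A^+)^2,
$$
with $\gamma p_\A^+ \in \BQ_N$, which proves the first assertion. For the \emph{moreover} clause I would argue by contradiction: if $-J_\A\dim\A = u^2$ for some $u \in \BQ_N$, then $u/(\gamma p_\A^+)$ would be a square root of $-1$ in $\BQ_N$, forcing $\zeta_4 \in \BQ_N$ and hence $4 \mid N$, against the hypothesis. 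Therefore $-J_\A\dim\A$ has no square root in $\BQ_N$.

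I expect the main obstacle to be the clean identification of the squares among the roots of unity of $\BQ_N$ and the verification that $J_\A\a^{-1}$ lands among them; this is precisely where $4 \nmid N$ enters twice, first to fix the order of the cyclic group of roots of unity modulo $4$, and again to rule out $4 \mid \ord(\a)$. An alternative route, bypassing $p_\A^+$, would instead invoke the $12$-th root of unity $x$ from Corollary \ref{c1}, which satisfies $x^3 p_\A^+/\zeta^3 \in \BQ_N$ and $(x/\zeta)^N = 1$, and then identify $x^6$ with $J_\A$; but controlling the sign $x^6$ in the case $N \equiv 2 \pmod 4$ is more delicate than the Gauss-sum argument sketched above, so I would favour the latter.
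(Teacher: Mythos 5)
Your proposal is correct, but it follows a genuinely different route from the paper's own proof. The paper deduces Theorem \ref{t:Jacobi} from Corollary \ref{c1}: for a 6-th root $\zeta$ of the anomaly, the congruence machinery behind Theorem II produces a 12-th root of unity $x$ with $(x/\zeta)^N=1$ and $x^3 p_\A^+/\zeta^3 \in \BQ_N$; setting $N'=N$ or $N/2$ (odd either way, since $4\nmid N$) gives $\a^{N'}=x^6$, one checks $x^6=J_\A$, and then $x^3p_\A^+/\zeta^3$ is itself the required square root of $J_\A\dim\A$. You instead work directly with the Gauss sum: $(p_\A^+)^2=\a\dim\A$ with $p_\A^+\in\BQ_N$ (because $d_i\in\BQ_N$ and the $\w_i$ are $N$-th roots of unity), and you absorb the discrepancy $J_\A\a^{-1}$ by an elementary cyclotomic lemma -- when $4\nmid N$ the group of roots of unity of $\BQ_N$ has order $\equiv 2 \pmod 4$, so every root of unity $\xi\in\BQ_N$ has order $d$ with $4\nmid d$, and $(-1)^{1+d}\xi$ is a square (for odd $d$ squaring is bijective on $\langle\xi\rangle$; for $d\equiv 2 \pmod 4$ one has $-\xi=\xi^{d/2+1}$ with $d/2+1$ even). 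What your argument buys is independence from the heavy input: it uses only $d_i\in\BQ_N$ and the standard fact that $\a$ is a root of unity, bypassing Corollary \ref{c1} and hence the entire congruence/lifting apparatus of Theorem II, whereas the paper's route, though heavier, exhibits the square root as the normalization factor of a level-$N$ modular representation and so presents the theorem as a direct arithmetic consequence of the congruence property, in line with the theme of that section. The treatment of the \emph{moreover} clause is identical in both proofs: square roots of both $\pm J_\A\dim\A$ in $\BQ_N$ would force $\sqrt{-1}\in\BQ_N$, hence $4\mid N$.
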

\begin{proof} Let $\zeta \in \k$ be a 6-th root of the anomaly $\a=\frac{p_\A^+}{p_\A^-}$ of $\A$. By Corollary \ref{c1},
there
exists a 12-th root of unity $x \in \k$ such that
$$
\left(\frac{x}{\zeta}\right)^N=1 \quad\text{and}\quad \frac{x^3 p_\A^+}{\zeta^3}\in  \BQ_N\,.
$$
Note that $\left(\frac{p_\A^+}{\zeta^3}\right)^2 = \dim \A$.

   Set $N'=N$ if $N$ is odd and $N'=N/2$ if $N$ is even. In particular, $N'$ is odd. Then
   $(\frac{x}{\zeta})^{N'} =\pm 1$ and so
   $$
   \a^{N'} = \zeta^{6N'} = x^{6N'}=x^6\,.
   $$
    By straightforward verification, one can show that $x^6=J_\A$. Therefore,
    $$
    \left(\frac{x^3 p_\A^+}{\zeta^3}\right)^2=x^6 \dim \A  =J_\A \dim \A\,.
    $$

   Suppose $-J_\A \dim \A$ also has a square root in $\BQ_N$. Since $J_\A \dim \A$ has a square root in
   $\BQ_N$,  so does $-1$. Therefore, $4 \mid N$, a contradiction.
\end{proof}

When $\dim \A$ is an odd integer, we will show that $J_\A = \Jac{-1}{\dim \A}$. Let us fix our
convention
in the following definition for the remainder of this paper.

\begin{defn}\label{d:interal_cats}
{\rm
  Let $\A$ be a modular category over $\k$.
 \begin{enumerate}
   \item[(i)] $\A$ is called \emph{mock integral} if its global dimension $\dim \A$ is an integer.
   \item[(ii)] $\A$ is called  \emph{integral} if $d(V) \in \BZ$ for all $V \in \A$.
 \end{enumerate}
 }
 \end{defn}
\begin{remark}
{\rm The standard definition of integral fusion categories is defined in terms of Frobenius-Perron dimensions. Following \cite{ENO}, a fusion category $\CC$ is called integral (resp. weakly integral) if $\FPdim V \in \BZ$ for all $V \in \CC$ (resp. $\FPdim \CC \in \BZ$). Moreover, any weakly integral spherical fusion category $\CC$ satisfies the pseudo-unitary condition: $\FPdim \CC = \dim \CC$. Therefore, weakly integral modular categories are obviously mock integral. The Deligne product of the Fibonacci modular category (cf. \cite[5.3.2]{RSW}) with its Galois conjugate is a mock integral modular category but not weakly integral.
}
 \end{remark}
It follows from \cite[Lem. A.1]{HR} and \cite[Prop. 8.24]{ENO} that $d(V) \in \BZ$ for all objects $V$ in a modular category $\A$ if,
and only if, $\FPdim V \in \BZ$ for all $V \in \A$ . Therefore, these two definitions of \emph{integral} modular categories are equivalent. A weakly integral modular category can also be characterized by the integrality of $d(V)^2$ as in the following lemma.
\begin{lem}
  A modular category $\A$ over $\k$ is weakly integral if, and only if,  $d(V)^2 \in \BZ$ for any simple object $V \in \A$.
\end{lem}
\begin{proof}
 By the modularity of $\A$, $\FPdim \A = \dim \A/d(U)^2$ for some simple object $U$. If $d(V)^2 \in \BZ$ for all simple objects $V \in \A$, then $\dim \A \in \BZ$ and hence $\FPdim \A \in \BZ$. Conversely, if $\FPdim \A \in \BZ$, then $\FPdim \A =\dim \A$ and $(\FPdim V)^2 \in \BZ$ for all simple objects $V \in \A$ by  \cite[Prop. 8.24 and 8.27]{ENO}. Since $d(V)^2 \le (\FPdim V)^2$, the pseudo-unitarity of $\A$ implies $d(V)^2 = (\FPdim V)^2 \in \BZ$.
\end{proof}

\begin{prop} \label{p:1st Jacobi}
  Let $\A$ be a mock integral modular category over $\k$ with Frobenius-Schur exponent $N$ and odd
  global
  dimension $\dim \A$. Then $J_\A = \Jac{-1}{\dim \A}$. In particular,
  $$
  J_\A =\left\{\begin{array}{ll}
    1 & \text{if }\dim \A \equiv 1 \mod 4, \\
    -1 & \text{if }\dim \A \equiv 3 \mod 4.
  \end{array}\right.
  $$
  Moreover, the square-free part of $\dim \A$ is a divisor of $N$.
\end{prop}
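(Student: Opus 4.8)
The plan is to set $D=\dim\A$, a positive odd integer, write $D=m^2D_0$ with $D_0$ its squarefree part, and put $\epsilon=\Jac{-1}{D}$; since $\Jac{-1}{m^2}=1$ we have $\epsilon=\Jac{-1}{D_0}$. The number-theoretic engine is the classical quadratic Gauss sum: for odd squarefree $D_0$ the element $g=\sum_{k=0}^{D_0-1}\zeta_{D_0}^{k^2}$ satisfies $g^2=\Jac{-1}{D_0}D_0=\epsilon D_0$, so $\sqrt{\epsilon D_0}\in\BQ_{D_0}$. The categorical input is the identity $(p_\A^+)^2=\a\,\dim\A$, which follows from $\a=p_\A^+/p_\A^-$ and \eqref{eq:gausssum}, together with $p_\A^+\in\BQ_N$ and $\a\in\BQ_N$ (the former because $d_i\in\BQ_N$ and each $\w_i$ is an $N$-th root of unity). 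The strategy is to show first that $D_0\mid N$, so that the Gauss sum lands in $\BQ_N$ and $\epsilon\,\dim\A$ becomes a square there, and then to compare this square root with the one produced for $J_\A\dim\A$ by Theorem \ref{t:Jacobi}, forcing $J_\A=\epsilon$.

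For the ``Moreover'' part I would argue as follows. Dividing $(p_\A^+)^2=\a m^2D_0$ by $m^2$ shows $\a D_0=(p_\A^+/m)^2$ is a square in $\BQ_N$, so $\sqrt{\a D_0}\in\BQ_N$. Since $\a$ is a root of unity with $\ord(\a)\mid 2N$, its square root lies in the cyclotomic field $\BQ_{2\ord(\a)}\subseteq\BQ_{4N}$, whence $\sqrt{D_0}=\sqrt{\a D_0}/\sqrt{\a}\in\BQ_{4N}$. Thus the quadratic field $\BQ(\sqrt{D_0})$ is contained in $\BQ_{4N}$, so its conductor---which is $D_0$ when $D_0\equiv 1\ (\mathrm{mod}\ 4)$ and $4D_0$ otherwise---divides $4N$. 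In either case, $D_0$ being odd forces $D_0\mid N$. Consequently $\BQ_{D_0}\subseteq\BQ_N$ and $\sqrt{\epsilon D}=m\sqrt{\epsilon D_0}\in\BQ_N$, i.e.\ $\epsilon\,\dim\A$ is a square in $\BQ_N$.

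Granting for the moment that $4\nmid N$, the main identity follows quickly: Theorem \ref{t:Jacobi} guarantees that $J_\A\dim\A$ has a square root in $\BQ_N$ while $-J_\A\dim\A$ has none. As $\epsilon\,\dim\A$ is a square in $\BQ_N$, the possibility $\epsilon=-J_\A$ would make $-J_\A\dim\A$ a square, a contradiction; hence $\epsilon=J_\A$, which is the assertion $J_\A=\Jac{-1}{\dim\A}$. The displayed ``In particular'' is then just the classical evaluation $\Jac{-1}{D}=(-1)^{(D-1)/2}$.

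The main obstacle is precisely the standing hypothesis $4\nmid N$: I must derive it from the oddness of $\dim\A$, which is a prime-$2$ instance of the Cauchy-type link between $\FSexp(\A)$ and $\dim\A$. It cannot come from field theory alone, since if $4\mid N$ then $\sqrt{-1}\in\BQ_N$ and both $\pm\epsilon\,\dim\A$ would be squares, consistent with everything above---so the content is genuinely categorical. I plan to isolate a lemma asserting that $\dim\A$ odd forces $N$ odd, extracting it from the Galois behaviour of the anomaly in Proposition \ref{p:anomaly} (where $\s^2(\zeta)/\zeta=\w_{\hs(0)}^{-1}$ is an $N$-th root of unity) together with the relation $\a^{N'}=J_\A$ from the proof of Theorem \ref{t:Jacobi}; the crux is to show that an even $2$-part of $N$ is incompatible with $\dim\A=p_\A^+p_\A^-$ being an odd rational integer.
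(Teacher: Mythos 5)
There is a genuine gap, and it sits exactly where you flag it: you never establish $4\nmid N$, and without that hypothesis Theorem \ref{t:Jacobi} --- the only tool you invoke to pin down the sign --- is unavailable. Worse, the plan you sketch for closing the gap cannot work as stated. The relation $\a^{N'}=J_\A$ is derived inside the proof of Theorem \ref{t:Jacobi} (via Corollary \ref{c1}) under the standing hypothesis $4\nmid N$, so invoking it to prove $4\nmid N$ is circular; and Proposition \ref{p:anomaly} constrains the order of the anomaly, not the parity of $N$, so it gives no purchase on the $2$-part of $N$ either. What you actually need --- that a prime dividing $N=\FSexp(\A)$ must divide $\dim\A$ --- is not a formal consequence of the identities you list; it is a theorem of Etingof (Vafa's theorem for tensor categories, \cite[Thm.~5.1]{Etingof02}), which gives $N\mid(\dim\A)^3$. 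This is precisely how the paper opens its proof: since $\dim\A$ is odd, $N$ is odd, in one line, and everything downstream then goes through. You correctly sensed that the missing fact is ``genuinely categorical,'' but the tools you propose to extract it from are themselves only field-theoretic consequences of Theorem II, so they cannot supply it.

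Granting $N$ odd, the rest of your argument is correct and genuinely different from the paper's. Your derivation of the ``Moreover'' part --- from $(p_\A^+)^2=\a\dim\A$ one gets that $\a D_0$ is a square in $\BQ_N$, hence $\sqrt{D_0}\in\BQ_{4N}$, hence the conductor of $\BQ(\sqrt{D_0})$ divides $4N$ and $D_0\mid N$ --- is sound, and in fact does not use $4\nmid N$ at all; and combining the classical Gauss-sum evaluation $\sqrt{\Jac{-1}{D_0}D_0}\in\BQ_{D_0}\subseteq\BQ_N$ with the dichotomy of Theorem \ref{t:Jacobi} does force $J_\A=\Jac{-1}{\dim\A}$. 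The paper instead classifies all quadratic subfields of $\BQ_N$ for odd $N$ as $\BQ(\sqrt{d^*})$ with $d$ a divisor of the radical of $N$ and $d^*=\Jac{-1}{d}d$, and reads off the sign and the divisibility $a\mid N$ simultaneously from the single containment $\BQ(\sqrt{J_\A\dim\A})\subseteq\BQ_N$; your version trades that classification for the anomaly identity plus conductor theory. Either route is acceptable --- but only after the missing input, $N$ odd, is supplied by Etingof's theorem.
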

\begin{proof} We may simply assume $\A$ contains a non-unit simple object. By \cite[Thm. 5.1]{Etingof02},
$N$ divides $(\dim \A)^3$.  In particular, $N$ is odd. It follows from the proof of \cite[Prop. 2.9]{ENO}
that for any embedding $\varphi:\BQ_N \to \BC$, $\varphi(d_i)$ is real for $i \in \Pi_\A$, and so $\varphi(\dim \A) > 1$. We can identify $\BQ_N$ with $\varphi(\BQ_N)$.

  If $\dim \A$ is the square of an integer, then $J_\A=1$ by Theorem \ref{t:Jacobi}, and $\Jac{-1}{\dim
  \A}=1$. In this case, the last statement is trivial.
  Suppose $\dim \A$ is not the square of any integer. It follows from
  Theorem \ref{t:Jacobi} that $\BQ(\sqrt{J_\A \dim \A})$ is a quadratic subfield of $\BQ_N$.  Note that
  $\BQ(\sqrt{p^*})$ is the unique quadratic subfield of $\BQ_{p^\ell}$ for any odd prime $p$ and positive
  integer $\ell$ (cf. \cite{Wash}),
  where $p^* =
  \Jac{-1}{p} p$, and that $\BQ(\sqrt{m}) \ne \BQ(\sqrt{m'})$ for any two distinct square-free integers
  $m,m'$. Let $p_1, \dots, p_k$ be the distinct prime factors of $N$.
  By counting the order 2 elements of $\GalQ{N}$, the quadratic subfields of $\BQ_N$ are of the form
  $\BQ(\sqrt{d^*})$ where $d$ is a positive divisor of $p_1\cdots p_k$, and $d^* = \Jac{-1}{d} d$.

  Let $a$ be the square-free part of $\dim \A$. Then $\Jac{-1}{\dim \A} = \Jac{-1}{a}$ and
  $\BQ(\sqrt{J_\A a})  = \BQ(\sqrt{J_\A \dim \A})$. By the preceding paragraph, $a \mid p_1\cdots p_k$
  and
  $J_\A = \Jac{-1}{a}$.
\end{proof}
\begin{remark}\label{r:6.5}
 {\rm In \cite{SZ2}, integral modular categories with the special Galois property ($\dagger$) $\s(\tilde
 s_{ij})=\tilde s_{\hs(i)j}$ were discussed. These conditions are not satisfied by some common modular
 categories such as the Ising and Fibonacci modular categories. However,  for semisimple quasi-Hopf algebras
 with modular module categories, the
first statement of the preceding proposition was proved  in \cite[Thm. 5.3]{SZ2}.

After our preprint was posted on the arXiv in early 2012, a number of results, which were not in \cite{SZ2},
appear in the serious revision \cite{SZ3} of \cite{SZ2} published in 2013. In \cite[Thm 2.6 and Prop 3.5]{SZ2,
SZ3}),  the same statement was established for integral modular categories satisfying ($\dagger$) by considering the quadratic subfields of $\BQ_N$ but using a different approach.}
\end{remark}
The following proposition on modular categories is a slight variation of \cite[Prop. 3]{CG99}, and it was
essentially proved [loc. cit.] under the assumption of Galois symmetry, which has been proved in the
previous
sections.
\begin{prop}\label{p:anomaly1}
  Let $\A$ be a modular category over $\k$, and $\rho$ a modular representation of $\A$. Set $s=\rho(\fs)$, $t=
  [\delta_{ij}t_i]_{i,j \in \Pi_\A}= \rho(\ft)$,  $n=\ord(t)$ and
  $$
  \BK_b=\BQ\left(\frac{s_{ib}}{s_{0b}}\big| i \in \Pi_\A\right) \quad \text{for }\, b \in \Pi_\A.
  $$
  \begin{enumerate}
    \item[(i)]  Then, for $\s \in \Gal(\BQ_n/\BK_b)$, $\s^2(t_b) = t_b$.
     \item[(ii)] If $\A$ is  integral, then the anomaly $\a=\frac{p^+_\A}{p^-_\A}$ of $\A$ is a $4$-th root
         of unity.
         \item[(iii)] Let $\BK = \BQ\left(\frac{s_{ib}}{s_{0b}}\big| i,b \in \Pi_\A\right)$, and $k$ the
             conductor of $\BK$, i.e.  the smallest positive integer $k$ such that $\BK \subseteq \BQ_k$.
     Then,  $\Gal(\BQ_n/\BK)$ is an elementary 2-group, and $|\Gal(\BQ_n/\BQ_k)|$ is a divisor of $8$.
     Moreover,  $\frac{n}{k}$ is a divisor of $24$, and $\gcd\left(\frac{n}{k}, k\right)$ divides $2$.
  \end{enumerate}
\end{prop}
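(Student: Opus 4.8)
The plan is to derive all three parts from the Galois symmetry of Theorem II~(iii). Specialised to $\fg=\ft$ it reads $\s^2(t)=\gs\,t\,\gs\inv$; since $\gs$ is the signed permutation matrix of $\hs$ and $t$ is diagonal, conjugation merely permutes the diagonal entries (the signs $\e_\s(i)$ square to $1$), so I would first record the equivalent scalar form
$$
t_{\hs(i)}=\s^2(t_i)\qquad(i\in\Pi).
$$
For (i), take $\s\in\Gal(\BQ_n/\BK_b)$. Then $\s$ fixes each generator $s_{ib}/s_{0b}$ of $\BK_b$, so \eqref{eq:galois1} gives $s_{ib}/s_{0b}=s_{i\hs(b)}/s_{0\hs(b)}$ for all $i$; the $b$-th and $\hs(b)$-th columns of $s$ are thus proportional, and invertibility of $s$ forces $\hs(b)=b$. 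The scalar form then yields $\s^2(t_b)=t_{\hs(b)}=t_b$.

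For (ii), integrality makes every $d_i$ an integer, and since $s_{i0}/s_{00}=\ts_{i0}/\ts_{00}=d_i$ this gives $\BK_0=\BQ$. Hence every $\s\in\AQab$ fixes $\BK_0$, so the argument of (i) gives $\hs(0)=0$ and $\w_{\hs(0)}=\w_0=1$ for all $\s$. This is precisely condition (i) of Proposition~\ref{p:anomaly}, which is equivalent to $\a^4=1$.

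For (iii) I would first strengthen (i): for $\s\in\Gal(\BQ_n/\BK)$ the automorphism $\s$ fixes every $\BK_b$, so $\hs=\id$ and hence $\s^2(t_i)=t_i$ for all $i$. As $t$ has order $n$, its diagonal entries are roots of unity whose orders have least common multiple $n$, so they generate $\BQ_n$; therefore $\s^2$ fixes $\BQ_n$ and $\s^2=1$. Thus $\Gal(\BQ_n/\BK)$, and a fortiori its subgroup $\Gal(\BQ_n/\BQ_k)\cong\ker\!\big(U(\BZ_n)\to U(\BZ_k)\big)$, is an elementary abelian $2$-group.

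The remaining and most delicate step — the main obstacle — is to convert this into the stated divisibilities. Writing the kernel as $\prod_p\ker\!\big(U(\BZ_{p^{a_p}})\to U(\BZ_{p^{b_p}})\big)$ with $a_p=v_p(n)$ and $b_p=v_p(k)$, I would argue locally that each factor has exponent $2$. For odd $p$ with $b_p\ge1$ the factor is cyclic of order $p^{a_p-b_p}$, forcing $a_p=b_p$; the sole odd contribution allowed is the exceptional case $p=3,\ b_3=0,\ a_3=1$, where $U(\BZ_3)\cong\BZ_2$. For $p=2$, using the conductor convention $k\not\equiv2\pmod 4$ (so $b_2\ne1$) and $U(\BZ_{2^a})\cong\BZ_2\times\BZ_{2^{a-2}}$, exponent $2$ caps the factor at $a_2\le3$ when $b_2=0$ and at $a_2-b_2\le1$ when $b_2\ge2$. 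Assembling the local data then shows that $n/k=\prod_p p^{a_p-b_p}$ divides $2^3\cdot3=24$, that the order $\prod_p|\ker_p|$ divides $4\cdot2=8$, and that, since a factor $3$ or the full $2^3$ in $n/k$ can occur only when $3\nmid k$ or $2\nmid k$ respectively, $\gcd(n/k,k)\mid2$. The delicate point throughout is the $b_p=0$ cases, where the local kernel is the whole group $U(\BZ_{p^{a_p}})$ rather than a cyclic $p$-group; these are exactly what produce the prime $3$ in $24$ and the factor $2^3$ in the order bound $8$, and so must be separated from the generic $b_p\ge1$ analysis.
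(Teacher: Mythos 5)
Your proof is correct, and its skeleton is the same as the paper's: Theorem II (iii) in the scalar form $\s^2(t_i)=t_{\hs(i)}$, the column-proportionality/invertibility argument forcing $\hs(b)=b$, the observation that the entries $t_b$ generate $\BQ_n$ so that $\Gal(\BQ_n/\BK)$ has exponent $2$, and then elementary number theory. The differences are in the supporting steps, and each is sound. In (i) you are more economical than the paper, which first proves $s_{0b}^2\in\BK_b$ from $s^2=\pm C$ so as to make the proportionality constant a sign; as you observe, any nonzero proportionality between two distinct columns of $s$ already contradicts invertibility, so that preliminary step can be dropped. In (ii) you route through Proposition \ref{p:anomaly} (the equivalence of $\w_{\hs(0)}=1$ for all $\s$ with $\a^4=1$), whereas the paper argues directly from $t_0=x/\zeta$ and Lemma \ref{app2}; the two are essentially the same, since Proposition \ref{p:anomaly} itself rests on Lemma \ref{app2}, but your version reuses an already-proved equivalence rather than recomputing. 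In (iii) the paper outsources the arithmetic to Appendix Lemma \ref{l:app3}, which is stated for an arbitrary divisor $k\mid n$ and needs a separate case for $k\equiv 2\pmod 4$, while you prove the divisibilities locally at each prime, invoking the conductor convention to exclude $v_2(k)=1$; your version is less general but exactly sufficient here because $k$ is defined as a conductor, and your identification of the exceptional local factors ($p=3$ with $3\nmid k$, and $p=2$ with $2\nmid k$ and $v_2(n)\le 3$) reproduces the content of the paper's appendix lemma, including the bounds $n/k\mid 24$, $\phi(n)/\phi(k)\mid 8$, and $\gcd(n/k,k)\mid 2$.
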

\begin{proof}
(i) Let $\s \in \Gal(\BQ_{n}/\BK_b)$ and $\e_\s$ the sign function determined by $s$ (cf. \ref{eq:galois2}).
Suppose $s^2 = \sgns C$ where $\sgns=\pm 1$. Then, by
  \eqref{eq:galois1},
$$
\frac{\sgns}{s_{0b}^2} = \sum_{i \in \Pi_\A}\frac{s_{ib} s_{ib^*}}{s_{0b}^2}=\sum_{i \in
\Pi_\A}\left(\frac{s_{ib}}{s_{0b}}\right)\left(\frac{s_{ib^*}}{s_{0b}}\right)=\sum_{i \in
\Pi_\A}\left(\frac{s_{ib}}{s_{0b}}\right)\left(\frac{s_{i^*b}}{s_{0b}}\right) \in \BK_b\,.
$$
Therefore, $s^2_{0b} \in \BK_b$ and so $\s(s^2_{0b})=s^2_{0b}$. Since
$\s(s_{0b})=\e_\s(b)s_{0\hs(b)}$, $s_{0\hs(b)} = \e s_{0b}$ for some sign $\e$. Now, for $i \in \Pi_\A$,
$$
\frac{s_{ib}}{s_{0b}} = \s\left(\frac{s_{ib}}{s_{0b}}\right)= \frac{s_{i\hs(b)}}{s_{0\hs(b)}} =
  \frac{\e s_{i\hs(b)}}{s_{0b}}\,.
  $$
  Thus, $s_{ib}= \e s_{i \hs(b)}$ for all $i \in \Pi_\A$. If $\hs(b) \ne b$, then the $b$-th and the
  $\hs(b)$-th
  columns of $s$ are linearly dependent but this contradicts the invertibility of $s$. Therefore, $\hs(b) =b$
  and hence, by Theorem II (iii), $\s^2(t_b) =t_{\hs(b)} = t_b$. \smallskip\\
  (ii) If $\A$ is integral, then $\BK_0 = \BQ$ and hence $\s^2(t_0) = t_0$ for all $\s \in \Gal(\BQ_n/\BQ)$.
  Recall from Section \ref{ss:MTC} that $t_0 =  x /\zeta$ for some $6$-th root $\zeta$ of $\a$ and some $12$-th
  root of unity $x \in \k$. By Lemma \ref{app2}, $x /\zeta$ is a $24$-th root of unity. Therefore,
  $$
  \a^4 = \zeta^{24} = \left(\zeta/x\right)^{24} =1\,.
  $$
  (iii) By (i), for $\s \in \Gal(\BQ_n/\BK)$, $\s^2(t_b)=t_b$ for all $b \in \Pi_\A$. Since $\BQ_n$ is
  generated by $t_b$ ($b \in \Pi_\A$), $\s^2 = \id$. Therefore, $\Gal(\BQ_n/\BK)$ is an elementary $2$-group,
  and so is $\Gal(\BQ_n/\BQ_k)$. Thus, for any integer $a$ relatively prime to $n$ such that $a \equiv 1 \mod
  k$, $a^2 \equiv 1 \mod n$. By Lemma \ref{l:app3},  we have $n/k$ is a divisor of $24$ and $\gcd(n/k, k) \mid
  2$. Moreover,
  $|\Gal(\BQ_n/\BQ_k)|=\phi(n)/\phi(k)$ is a divisor of $8$.
\end{proof}
\begin{remark}
  {\rm The proof of the preceding proposition is a mere adaptation of  \cite[Prop. 3]{CG99}. For integral modular categories satisfying ($\dagger$) (see Remark \ref{r:6.5}), Proposition \ref{p:anomaly1} (ii) and (iii) also appear in the final version of
  \cite[Thms. 2.3.2, 3.4]{SZ3} with similar ideas. The following corollary was also established for factorizable quasi-Hopf algebras in \cite[Thm. 4.3]{SZ2, SZ3} with a different approach.}
\end{remark}
\begin{cor} \label{c:anomaly1}
  Let $\A$ be an integral modular category with anomaly $\a=\frac{p_\A^+}{p_\A^-}$. If $\dim \A$ is odd, then
 $\a = \Jac{-1}{\dim \A}$.
\end{cor}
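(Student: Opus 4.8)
The plan is to combine three facts already available: the $J$-symbol computation of Proposition \ref{p:1st Jacobi}, the order bound on the anomaly of an integral category in Proposition \ref{p:anomaly1}(ii), and the oddness of the Frobenius--Schur exponent $N$. Since $\A$ is integral, $d_i \in \BZ$ for all $i \in \Pi_\A$, so $\dim \A = \sum_i d_i^2$ is an integer and $\A$ is in particular weakly integral; as $\dim \A$ is odd by hypothesis, Proposition \ref{p:1st Jacobi} applies and yields $J_\A = \Jac{-1}{\dim \A}$. Recalling the definition $J_\A = (-1)^{1+\ord \a}$, the desired identity $\a = \Jac{-1}{\dim \A}$ will follow at once provided I can show $\a \in \{1,-1\}$: for then $\ord \a = 1$ forces $\a = 1 = J_\A$, while $\ord \a = 2$ forces $\a = -1 = J_\A$.

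It therefore remains to establish $\a^2 = 1$. Proposition \ref{p:anomaly1}(ii) already gives $\a^4 = 1$, so the only gap is to exclude $\ord \a = 4$, i.e.\ $\a = \pm i$. Here I would invoke that $N$ is odd: by \cite[Thm. 5.1]{Etingof02} the exponent $N$ divides $(\dim \A)^3$, which is odd, hence $N$ is odd. The anomaly $\a$ is a root of unity lying in $\BQ_N$ (as recalled at the start of Section \ref{s:anomaly}), and since $N$ is odd the field $\BQ_N = \BQ(\zeta_N)$ contains no primitive fourth root of unity (equivalently, its roots of unity have order dividing $2N$, and $4 \nmid 2N$). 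Thus $\a \neq \pm i$, so $\ord \a \neq 4$; together with $\a^4 = 1$ this gives $\ord \a \mid \gcd(4, 2N) = 2$, that is $\a^2 = 1$.

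The main obstacle is precisely this exclusion of $\a = \pm i$, which cannot be read off the $J$-symbol formula alone, since that formula only records the parity of $\ord \a$ and would be equally consistent with $\a$ being a primitive fourth root of unity. The genuine arithmetic input is that $\dim \A$ odd forces $N$ odd, whence $i \notin \BQ_N$. Once $\a \in \{1,-1\}$ is secured, matching $\a$ against $J_\A = \Jac{-1}{\dim \A}$ through the two cases above completes the argument, and the congruence description $J_\A = 1$ for $\dim \A \equiv 1 \pmod 4$ and $J_\A = -1$ for $\dim \A \equiv 3 \pmod 4$ from Proposition \ref{p:1st Jacobi} pins down the sign of the Jacobi symbol.
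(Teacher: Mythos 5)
Your proposal is correct and follows essentially the same route as the paper's own proof: oddness of $\dim\A$ forces $N$ odd via $N \mid (\dim\A)^3$, then $\a \in \BQ_N$ together with $\a^4=1$ (Proposition \ref{p:anomaly1}(ii)) rules out $\a = \pm i$ and gives $\a^2=1$, after which Proposition \ref{p:1st Jacobi} identifies $\a = (-1)^{1+\ord\a} = J_\A = \Jac{-1}{\dim\A}$. The only difference is that you spell out explicitly the cyclotomic fact (roots of unity in $\BQ_N$ have order dividing $2N$) that the paper leaves implicit in the step ``$\a \in \BQ_N$ and $\a^4=1$ imply $\a^2=1$.''
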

\begin{proof}  If $\dim \A$ is odd, then so is the Frobenius-Schur exponent $N$ of $\A$ as $N \mid (\dim
\A)^3$.
  Since $\a \in \BQ_N$ and $\a^4 = 1$, $\a^2=1$. It follows from Proposition \ref{p:1st Jacobi} that
  $$
  \a = (-1)^{1 +\ord \a}= J_\A = \Jac{-1}{\dim\A}\,. \qedhere
  $$
\end{proof}

The Ising modular category is an example of weakly integral modular category (cf.
\cite[5.3.4]{RSW})
and its central charge is $\bc=\frac{1}{2}$. Therefore,  its anomaly is $e^{\pi i/4}$, an  eighth root
of
unity, and this holds for every weakly integral modular category.
\begin{thm}\label{t:weakly integral}
  The anomaly $\a = p_\A^+/p_\A^-$ of any weakly integral modular category $\A$ is an eighth root of unity.
\end{thm}
\begin{proof}
  Suppose $\zeta \in \k$ is a 6-th root of the anomaly $\a$ of a weakly integral modular category $\A$.
  Then
  $\l = p_\A^+/\zeta^3$ is a square root of $\dim \A$. Consider the modular representation $\rho^\zeta$
  of
  $\A$ given by
  $$
  \rho^\zeta : \fs \mapsto s:=\frac{1}{\l} \ts, \quad \ft \mapsto t:=\frac{1}{\zeta}\tt\,.
  $$ Let $\tt=[\delta_{ij} \w_i]_{i,j \in \Pi_\A}$ be the $T$-matrix of $\A$. Since
  $s^2_{0i}=\frac{d^2_i}{\dim
  \A} \in \BQ$, for $\s \in \AQab$,
  $$
  s^2_{0i} = \s(s^2_{0i}) = s^2_{0\hs(i)}
  $$
  or $d^2_i =  d^2_{\hs(i)}$ for all $i \in \Pi_\A$.
  By Theorem II (iii),
  $$
   \s^2\left(\sum_{i \in \Pi_\A} d_i^2 \frac{\w_i}{\zeta} \right)=
  \sum_{i \in \Pi_\A} d_i^2 \frac{\w_{\hs(i)}}{\zeta}
  =  \sum_{i \in \Pi_\A} d_{\hs(i)}^2 \frac{\w_{\hs(i)}}{\zeta}
  = \sum_{i \in \Pi_\A} d_i^2 \frac{\w_i}{\zeta}
  \,.
  $$
  Thus, we have
  $$
  \frac{\s^2(p_\A^+)}{p_\A^+} = \frac{\s^2(\zeta)}{\zeta}\,.
  $$
  Since $\dim \A$ is a positive integer, $\s^2(\l) = \l$ and so
  $$
  \frac{\s^2(\zeta^3)}{\zeta^3} = \frac{\s^2(p_\A^+/\l)}{p_\A^+/\l} = \frac{\s^2(p_\A^+)}{p_\A^+} =
  \frac{\s^2(\zeta)}{\zeta}\,.
  $$
  Therefore, we find $\frac{\s^2(\zeta^2)}{\zeta^2} = 1$ for all $\s \in \AQab$. It follows from Lemma
  \ref{app2} that $\zeta^{48}=1$ and so $\a^8=1$.
\end{proof}

Corollary \ref{c:anomaly1} and the Cauchy theorem for Hopf algebras \cite{KSZ} as
well as quasi-Hopf algebras \cite{NS3} suggest a more general version of Cauchy theorem may hold for
spherical fusion categories or modular categories over $\k$. We finish this paper with two equivalent
questions.
\begin{q} \label{q:1}
  Let $\CC$ be a spherical fusion category over $\k$ with Frobenius-Schur exponent $N$. Let $\OO$ denote
  the ring of integers of $\BQ_N$. Must the principal ideals $\OO (\dim \CC)$ and $\OO N$ of $\OO$ have
  the
  same prime ideal factors?
\end{q}
Since $Z(\CC)$ is a modular category over $\k$ and $(\dim \CC)^2 = \dim Z(\CC)$, the preceding question
is
equivalent to
\begin{q}
  Let $\A$ be a modular category over $\k$ with Frobenius-Schur exponent $N$. Let $\OO$ denote the ring
  of
  integers of $\BQ_N$. Must the principal ideals $\OO (\dim \A)$ and $\OO N$ of $\OO$ have the same prime
  ideal factors?
\end{q}
By \cite{Etingof02}, $\frac{(\dim \A)^3}{N} \in \OO$. Therefore, the prime ideal factors of $\OO N$ are a
subset
of $\OO \dim \A$. The converse is only known be true for the representation categories of semisimple
quasi-Hopf
algebras by \cite[Thm. 8.4]{NS3}. Question \ref{q:1} was originally raised in \cite[Qu. 5.1]{EG99} for
semisimple Hopf algebras which had been solved in \cite[Thm. 3.4]{KSZ}.

\section*{Appendix}
The first lemma in this Appendix could be known to some experts. An analogous result for $\PSL$ was proved by
Wohlfahrt
\cite[Thm. 2]{Woh} (see also Newman's proof \cite[Thm. VIII.8]{New}). However, we do not see the
lemma as an immediate consequence of Wohlfahrt's theorem for $\PSL$.

\begin{Applem}\label{l:app1}
 Let $H$ be a  congruence normal subgroup of $\SL{}$. Then the level of $H$ is equal to the order of
 $\ft
 H$ in $\SL{}/H$.
\end{Applem}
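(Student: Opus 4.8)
The plan is to set $m=\ord(\ft H)$ in $\SL{}/H$ and prove $m=n$ by sandwiching. The easy direction $m\mid n$ is immediate: since $H$ has level $n$ we have $\G(n)\le H$, and $\ft^{n}=\mtx{1 & n\\ 0 & 1}\equiv\id\bmod n$ lies in $\G(n)$, so $(\ft H)^{n}=H$ and hence $m\mid n$; in particular $m\le n$. For the reverse inequality I would establish $\G(m)\le H$ and then invoke the very definition of level: since $n$ is the least positive integer with $\G(n)\le H$, the inclusion $\G(m)\le H$ forces $n\le m$, and together with $m\le n$ this yields $m=n$.

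The substance is therefore to show $\G(m)\le H$, which I would do by passing to a finite group using normality and the congruence hypothesis. From $(\ft H)^{m}=H$ we get $\ft^{m}\in H$, and because $H\triangleleft\SL{}$ the entire normal closure $\langle\langle\ft^{m}\rangle\rangle$ of $\ft^{m}$ lies in $H$. Since also $\G(n)\le H$ with $m\mid n$, I would quotient by $\G(n)$ and work inside $\SL{n}=\SL{}/\G(n)$: writing $\ol H=H/\G(n)$, the inclusion $\G(m)\le H$ is equivalent to
\begin{equation*}
\G(m)/\G(n)=\ker\big(\SL{n}\to\SL{m}\big)\subseteq\ol H,
\end{equation*}
and for this it suffices to show that the reduction kernel $K:=\ker(\SL{n}\to\SL{m})$ is contained in the image of $\langle\langle\ft^{m}\rangle\rangle$, i.e. in the normal closure of $\mtx{1 & m\\ 0 & 1}$ inside $\SL{n}$.

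The main obstacle is the resulting purely finite statement: for $m\mid n$, the normal closure of $\mtx{1 & m\\ 0 & 1}$ in $\SL{n}$ equals $K$. One inclusion is automatic since $K$ is normal and contains $\mtx{1 & m\\ 0 & 1}$. For the reverse I would first note that the ``$m$-elementary'' matrices $\mtx{1 & mk\\ 0 & 1}=\ft^{mk}$ and $\mtx{1 & 0\\ mk & 1}=\fs\ft^{-mk}\fs\inv$ already lie in the normal closure, and then prove that these generate all of $K$. By the Chinese Remainder Theorem $\SL{n}\cong\prod_{p}\SL{p^{e_p}}$, under which both $K$ and the subgroup generated by the $m$-elementary matrices factor through the primes, reducing the claim to each local factor $\SL{p^{e}}$ with the ideal $p^{f}\BZ_{p^e}$ where $p^{f}=\gcd(m,p^{e})$. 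Since $\BZ_{p^{e}}$ is a local ring, every unimodular column has a unit entry, so a Gauss-elimination argument using the relative elementary matrices reduces any element of $\ker(\SL{p^e}\to\SL{p^f})$ to a diagonal matrix $\mtx{u & 0\\ 0 & u\inv}$ with $u\equiv1\bmod p^{f}$, which a Whitehead-type identity then expresses as a product of relative elementaries; this is the classical equality of the relative congruence and elementary subgroups of $SL_2$ over a local ring. I expect the Whitehead (diagonal) step and the low-prime bookkeeping, especially $p=2$, together with verifying that the factorizations genuinely respect the Chinese Remainder splitting, to be the most delicate points.
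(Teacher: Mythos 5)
Your overall route is genuinely different from the paper's proof, which never leaves $\SL{}$: there, an arbitrary $\fg\in\G(n)$ is conjugated by a power of $\ft$ so that (via Dirichlet's theorem) one entry becomes a prime not dividing the level, and then an explicit word in $\fs,\ft$ congruent to that matrix is collapsed to the trivial coset using normality of $H$ and $\ft^{m}\in H$. Your reduction to a purely finite statement is also a sound skeleton, and the finite statement itself is true (it is essentially the lemma underlying Wohlfahrt's theorem, which the paper's appendix remark alludes to). But there is a genuine gap exactly at the step you treat as classical: it is \emph{false} that the upper and lower $m$-elementary matrices generate $K=\ker\big(\SL{n}\to\SL{m}\big)$, and the failure is not confined to $p=2$. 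Take $n=p^{2}$, $m=p$. Every element of $\ker\big(\SL{p^2}\to\SL{p}\big)$ has the form $\id+pN$ with $\operatorname{tr}N\equiv 0 \bmod p$, and any two such elements commute, so this kernel is abelian of order $p^{3}$, while the subgroup generated by all $\mtx{1 & pa\\ 0 & 1}$ and $\mtx{1 & 0\\ pb & 1}$ is exactly $\left\{\mtx{1 & pa\\ pb & 1}\right\}$, of order $p^{2}$. The missing elements are precisely the Whitehead diagonals $\mtx{1+pc & 0\\ 0 & 1-pc}$; for $p=2$ the cleanest instance is $-\id\in SL_2(\BZ_{4})$, which is not a product of $(2)$-elementary matrices. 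So the relative Whitehead identity cannot be performed with $I$-elementary matrices alone, and the ``classical equality of relative congruence and elementary subgroups'' holds only when the relative elementary subgroup is defined as a \emph{normal closure} of the $I$-elementaries, not as the subgroup they generate.

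The gap is repairable precisely because your actual target is a normal closure: you only need $K$ to equal the normal closure in $\SL{p^{e}}$ of the $I$-elementary matrices. Your Gauss-elimination step (valid, since $I$ lies in the maximal ideal, so the $(1,1)$-entry is a unit) reduces any element of $K$ to $\mathrm{diag}(u,u\inv)$ with $u\equiv 1\bmod I$, and the identity
\begin{equation*}
\mtx{u & 0\\ 0 & u\inv}\;=\;\mtx{1 & 0\\ u\inv-1 & 1}\cdot\mtx{1 & 1\\ 0 & 1}\mtx{1 & 0\\ u-1 & 1}\mtx{1 & 1\\ 0 & 1}\inv\cdot\mtx{1 & 1-u\inv\\ 0 & 1}
\end{equation*}
writes this diagonal as ($I$-elementary)$\times$(conjugate of an $I$-elementary)$\times$($I$-elementary), hence as an element of the normal closure. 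Combined with the Chinese-remainder point you flag (which does work, because the cyclic subgroup $\langle\ft^{m}\rangle\le\SL{n}$ splits as a product over the prime-power factors, so its normal closure splits componentwise), this closes your argument. The corrected proof is heavier machinery than the paper's one-paragraph computation, but it isolates a reusable finite lemma about $SL_2(\BZ_n)$ rather than relying on a clever word manipulation.
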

\begin{proof}
   Let $m$ be the level of $H$ and $n=\ord \ft H$. Since $\ft^m \in \G(m) \le H$,  $\ft^m \in H$  and
   hence
   $n \mid m$.

  Suppose $\fg =\mtx{a & b \\ c & d} \in \G(n)$. Since $ad-bc=1$, by Dirichlet's theorem, there exists a
  prime $p \nmid m$ such that $p=d+kc$ for some integer $k$. Then,
  $$
  \ft^{-k}\fg \ft^k = \mtx{a' & b'  \\ c & p} \in \G(n)
  $$
  for some integers $a', b'$. In particular,
  $$
  a'p-b'c=1,\quad  p \equiv a' \equiv 1 \mod n \quad\text{and} \quad c \equiv b' \equiv 0 \mod n.
   $$
   Since $p \nmid m$, there exists an integer $q$ such that $p q \equiv 1 \mod m$. Thus, $pq \equiv 1
   \mod
   n$ and so $q \equiv 1 \mod n$. One can verify directly that
  $$
  \mtx{a' & b' \\ c & p} \equiv \ft^{b'q}\fs\inv \ft^{(-c+1)p}\fs\ft^{q}\fs \ft^p \mod m\,.
  $$
  Therefore,
  $$
      \ft^{-k} \fg \ft^k H = \ft^{b'q} \fs\inv \ft^{(-c+1)p}\fs\ft^{q}\fs \ft^p H
      =  \fs\inv  \ft  \fs  \ft  \fs  \ft H =  \fs\inv   \fs H =H\,.
  $$
 This implies $\ft^{-k} \fg \ft^k \in H$, and hence $\fg \in H$. Therefore, $\G(n) \le H$ and so $m \mid
 n$.
\end{proof}
The following fact should be well-known. We include the proof here for the convenience of the reader.
\begin{Applem}\label{app2}
Let $\zeta$ be a root of unity in $\k$.  Then $\s^2(\zeta)=\zeta$ for all $\s \in \AQab$ if, and only
if,
$\zeta^{24}=1$.
\end{Applem}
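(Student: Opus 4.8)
The plan is to translate the Galois-theoretic condition into a congruence condition on the unit group $U(\BZ_m)$, where $m=\ord(\zeta)$, and then apply the elementary structure theory of $U(\BZ_m)$.

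First I would set $m=\ord(\zeta)$, so that $\zeta$ is a primitive $m$-th root of unity and $\zeta\in\BQ_m$. Since $\s(\zeta)$ depends only on $\s|_{\BQ_m}$ and the restriction map $\AQab\to\GalQ{m}$ is surjective (as recalled in Section \ref{s:prelim}), the hypothesis ``$\s^2(\zeta)=\zeta$ for all $\s\in\AQab$'' is equivalent to ``$\tau^2(\zeta)=\zeta$ for every $\tau\in\GalQ{m}$''. Under the isomorphism $\GalQ{m}\cong U(\BZ_m)$, every $\tau$ is of the form $\s_a$ for some $a$ relatively prime to $m$, and $\s_a^2(\zeta)=\zeta^{a^2}$. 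As $\zeta$ has order exactly $m$, the equality $\zeta^{a^2}=\zeta$ holds if and only if $a^2\equiv 1\pmod m$. Thus the statement is equivalent to: $a^2\equiv 1\pmod m$ for every $a\in U(\BZ_m)$, i.e. the exponent of $U(\BZ_m)$ divides $2$.

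It then remains to prove that $U(\BZ_m)$ has exponent dividing $2$ if and only if $m\mid 24$, which is precisely the condition $\zeta^{24}=1$. For the ``if'' direction I would note that $U(\BZ_{24})\cong(\BZ/2\BZ)^3$, so $a^2\equiv 1\pmod{24}$ for all $a$ coprime to $24$, whence $a^2\equiv 1\pmod m$ whenever $m\mid 24$. For the ``only if'' direction I would argue contrapositively: if $m\nmid 24$, then $16\mid m$, or $9\mid m$, or some prime $p\ge 5$ divides $m$. By the Chinese Remainder Theorem $U(\BZ_m)$ surjects (via reduction) onto $U(\BZ_{16})$, $U(\BZ_9)$, or $U(\BZ_p)$, respectively, each of which contains an element of order exceeding $2$, since $U(\BZ_{16})\cong\BZ/2\BZ\times\BZ/4\BZ$, $U(\BZ_9)\cong\BZ/6\BZ$, and $U(\BZ_p)\cong\BZ/(p-1)\BZ$ with $p-1\ge 4$. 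Pulling back such an element to some $a\in U(\BZ_m)$ yields $a^2\not\equiv 1\pmod m$, so the exponent is larger than $2$.

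The only substantive input is the classification of those $m$ for which $U(\BZ_m)$ is $2$-torsion, and the main (though mild) obstacle is organizing the prime-by-prime analysis cleanly. This reduces to the standard facts that $U(\BZ_{p^e})$ is cyclic of order $p^{e-1}(p-1)$ for odd $p$ and that $U(\BZ_{2^e})\cong\BZ/2\BZ\times\BZ/2^{e-2}\BZ$ for $e\ge 3$; no deeper machinery is required.
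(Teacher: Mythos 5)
Your proof is correct and follows essentially the same route as the paper's: both reduce, via surjectivity of the restriction map $\AQab \to \Gal(\BQ(\zeta)/\BQ) \cong U(\BZ_m)$ where $m = \ord(\zeta)$, to the statement that $U(\BZ_m)$ has exponent at most $2$ if and only if $m \mid 24$. The only difference is that the paper simply cites this last number-theoretic fact ("Note that..."), whereas you supply its standard prime-by-prime verification via the structure of $U(\BZ_{p^e})$.
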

\begin{proof}
  Let $m$ be the order of $\zeta$. Then $\Gal(\BQ(\zeta)/\BQ) \cong U(\BZ_m)$.
  Note that the group $U(\BZ_m)$  has exponent $\le 2$ if and only if $m \mid 24$. Since $\BQ(\zeta)$ is
  a
  Galois extension over $\BQ$, the restriction map $\AQab \xrightarrow{res} \Gal(\BQ(\zeta)/\BQ)$ is
  surjective. Thus, if $\s^2(\zeta) = \zeta$ for all $\s \in \AQab$, then the exponent of
  $\Gal(\BQ(\zeta)/\BQ)$ is at most 2, and hence  $m \mid 24$. Conversely, if $m \mid 24$, then the
  exponent of $\Gal(\BQ(\zeta)/\BQ)$ is at most 2, and so $\s^2(\zeta) = \zeta$ for all $\s \in \AQab$.
\end{proof}
The next lemma is a variation of the argument used in the proof of \cite[Prop. 3]{CG99}.
\begin{Applem}\label{l:app3}
Let $k$ be a positive divisor of a positive integer $n$. Suppose that for any integer $a$ relatively prime to $n$
such that $a \equiv 1 \mod k$, $a^2 \equiv 1 \mod n$. Then $\gcd(n/k, k) \mid 2$  and $n/k$ is a divisor of
$24$. Moreover, $\phi(n)/\phi(k)$ is a divisor of $8$.
\end{Applem}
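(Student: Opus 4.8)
The plan is to recast the hypothesis as a statement about the kernel of a reduction map between unit groups. Let $U(\BZ_m)$ denote the group of units of $\BZ_m$. Since $k \mid n$, reduction modulo $k$ gives a surjective homomorphism $U(\BZ_n) \to U(\BZ_k)$ whose kernel is
$$
K = \{\, a \bmod n : \gcd(a,n)=1,\ a \equiv 1 \mod k \,\},
$$
a subgroup of order $\phi(n)/\phi(k)$. The hypothesis asserts exactly that every element of $K$ squares to the identity, so $K$ is an elementary abelian $2$-group. All three conclusions are then purely structural facts about $K$, $n$, and $k$.

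First I would decompose everything along the prime factorization. Writing $n = \prod_p p^{e_p}$ and $k = \prod_p p^{f_p}$ with $0 \le f_p \le e_p$, the Chinese Remainder Theorem gives $K \cong \prod_p K_p$, where $K_p$ is the kernel of $U(\BZ_{p^{e_p}}) \to U(\BZ_{p^{f_p}})$. Hence the hypothesis is equivalent to the assertion that each local factor $K_p$ has exponent at most $2$, and I would analyze this condition one prime at a time.

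For an odd prime $p$ the group $U(\BZ_{p^{e}})$ is cyclic, so $K_p$ is cyclic, of order $p^{\,e_p-f_p}$ when $f_p \ge 1$ and of order $p^{\,e_p-1}(p-1)$ when $f_p = 0$. A cyclic group has exponent $\le 2$ if and only if its order is $\le 2$, which forces $e_p = f_p$ whenever $p \mid k$, and $p = 3$ with $e_p = 1$ whenever $p \mid n$ but $p \nmid k$. The prime $p = 2$ is the main obstacle, since $U(\BZ_{2^{e}}) \cong \BZ_2 \times \BZ_{2^{e-2}}$ is non-cyclic for $e \ge 3$; here I would use the standard filtration $U_j = \{a : a \equiv 1 \mod 2^{j}\}$, which is cyclic of order $2^{\,e-j}$ for $j \ge 2$, to show that $K_2$ has exponent $\le 2$ precisely when $e_2 - f_2 \le 1$ (if $f_2 \ge 2$) or $e_2 \le 3$ (if $f_2 \le 1$).

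Finally I would assemble the local data. The constraints give $e_2 - f_2 \le 3$ and $e_3 - f_3 \le 1$, with $e_p = f_p$ for every other prime, so $n/k \mid 2^{3}\cdot 3 = 24$. No odd prime divides both $n/k$ and $k$, since $e_p = f_p$ whenever an odd $p$ divides $k$; and checking that $\min(e_2 - f_2,\, f_2) \le 1$ in each of the cases above shows that the $2$-part of $\gcd(n/k,k)$ is at most $2$, whence $\gcd(n/k,k) \mid 2$. For the last claim, the local orders satisfy $|K_2| \mid 4$ and $|K_3| \mid 2$ with $|K_p| = 1$ for $p \ge 5$, so $\phi(n)/\phi(k) = |K| = \prod_p |K_p|$ divides $8$.
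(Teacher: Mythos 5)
Your proof is correct, and it reaches the three conclusions by a genuinely different decomposition than the paper's. Both arguments start from the same observation --- the kernel $K$ of the reduction $U(\BZ_n) \to U(\BZ_k)$ is an elementary abelian $2$-group --- but the paper then stays semi-global: it first notes that $\phi(n)/\phi(k)$, and hence also $\gcd(n/k,k)$, must be a power of $2$, and splits into two cases on the parity of $\gcd(n/k,k)$. When $\gcd(n/k,k)=1$, it lifts an arbitrary unit $y$ modulo $n/k$ into $K$ by the Chinese remainder theorem, concludes that $U(\BZ_{n/k})$ has exponent at most $2$, and invokes the characterization underlying Lemma~\ref{app2} (exponent at most $2$ iff the modulus divides $24$) to get $n/k \mid 24$ and $\phi(n)/\phi(k)=\phi(n/k)\mid 8$. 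When $2 \mid \gcd(n/k,k)$, it writes $k=2^u k'$, $n=2^v n' k'$ with $k'$, $n'$ odd, analyzes the kernel of $U(\BZ_{2^v})\to U(\BZ_{2^u})$ by hand (obtaining $2\le v\le 3$ if $u=1$ and $v=u+1$ if $u>1$), and treats the odd part $n'$ by the first-case argument. You instead localize at every prime from the outset, so the global case split disappears: cyclicity of $U(\BZ_{p^{e}})$ disposes of all odd primes uniformly, the filtration $U_j$ of $U(\BZ_{2^{e}})$ handles $p=2$, and all three conclusions are read off from a single table of local exponents $e_p - f_p$. Your route re-proves, prime by prime, the ``divides $24$'' fact that the paper imports as a black box, so it is somewhat longer in substance; what it buys is a complete classification of the pairs $(n,k)$ satisfying the hypothesis and a cleaner, case-free assembly at the end. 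The paper's argument, in exchange, is shorter given Lemma~\ref{app2}, and its second case incidentally yields the slightly sharper bound $n/k \mid 12$ when $\gcd(n/k,k)=2$.
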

 \begin{proof}
Let $\pi : U(\BZ_n) \to U(\BZ_k)$ be the reduction map. The assumption  implies that $\ker \pi$ is an
elementary 2-group. It follows from the exact sequence
$$
0 \to \ker \pi \to U(\BZ_n) \xrightarrow{\pi} U(\BZ_k) \to 0
$$
that $\phi(n)/\phi(k)$ is a power of 2, and so is $\gcd(n/k, k)$. Thus, if $2 \nmid \gcd(n/k, k)$, then
$\gcd(n/k, k)=1$. By the Chinese remainder theorem,
for any integer $y$ relatively prime to $n/k$, there exists an integer $a$ such that $a \equiv y \mod n/k$, and
$a \equiv 1 \mod k$. Thus, $a^2 \equiv 1 \mod n$, and hence $y^2 \equiv 1 \mod n/k$. This implies the exponent
of  $U(\BZ_{n/k})$ is at most 2, and so  $\frac{n}{k}\mid 24$. Moreover, $\frac{\phi(n)}{\phi(k)} = \phi(n/k)$
is a factor of $8$.

Suppose $2 \mid \gcd(n/k, k)$. Then $k = 2^u k\rq{}$ for some positive integer $u$ and odd integer $k\rq{}$.
The aforementioned conclusion implies $n = 2^v n\rq{} k\rq{}$ where $v > u$ and $\gcd(n\rq{}, 2^v k\rq{})=1$.
By the Chinese remainder theorem, the given condition implies the  kernel of the reduction map $ U(\BZ_{2^v})
\to U(\BZ_{2^u})$ is an elementary 2-group. Therefore,  $2 \le v \le 3$ if $u=1$, and $v=u+1$ if $u > 1$. In
both cases, $\gcd(n/k, k) = 2$ and $\frac{\phi(2^v)}{\phi(2^u)}$ is a divisor of $4$. By the aforementioned
argument, for any integer $y$ relatively prime to  $n\rq{}$, $y^2 \equiv 1 \mod n\rq{}$. Therefore, $n\rq{}
\mid 24$ and hence $n\rq{} \mid 3$. Thus, $n/k = n\rq{} 2^{v-u} \mid 12$, and
$$
\frac{\phi(n)}{\phi(k)}=\phi(n')\,\frac{\phi(2^v)}{\phi(2^u)}
$$
is also a divisor of $8$.
\end{proof}
\vspace{0.3cm}
\begin{Ak} \hfill{\mbox{}}

{\rm
  Part of this paper was carried out while the third author was visiting the National Center for Theoretical
  Sciences and Shanghai University. He would like to thank these institutes for their generous
  hospitality,
  especially  Ching-Hung Lam, Wen-Ching Li and Xiuyun Guo for being wonderful hosts. He particularly
  thanks Ling Long for many invaluable discussions,  and Eric Rowell for his suggestions and stimulative
  discussions after the first version of this paper was posted on the arXiv.
  }
\end{Ak}

\providecommand{\bysame}{\leavevmode\hbox to3em{\hrulefill}\thinspace}
\providecommand{\href}[2]{#2}

\end{document}